\documentclass[12pt]{amsart}
\usepackage{hyperref}
\usepackage{ amssymb }
\numberwithin{equation}{section}
\usepackage{hyperref}
%\usetikzlibrary{arrows}
%\pagestyle{empty}
%\usepackage[english]{babel}
%\usepackage{caption}
%\usepackage{graphicx}
\usepackage{float}
\usepackage{pgf,tikz}

\newcommand \reg{\operatorname{reg}}

\newcommand\link{\operatorname{link}}

\newcommand \G{\mathcal{G}}

\newcommand \B{\mathcal{B}}

\newcommand \K{\mathbb{K}}

\newcommand{\ass}{\operatorname{Ass}}

\newtheorem{theorem}{Theorem}[section]
\newtheorem{definition}[theorem]{Definition}
\newtheorem{cons}[theorem]{Construction}
\newtheorem{lemma}[theorem]{Lemma}

\newtheorem{example}[theorem]{Example}
\newtheorem{obs}[theorem]{Observation}
\newtheorem{question}[theorem]{Question}
\newtheorem{remark}[theorem]{Remark}

\newtheorem{corollary}[theorem]{Corollary}

\newtheorem*{notation*}{Notation}

\setlength{\textheight}{23cm}
\setlength{\textwidth}{16cm}
\setlength{\topmargin}{-0.8cm}
\setlength{\parskip}{0.3\baselineskip}
\hoffset=-1.4cm

\begin{document}
% \title[Symbolic powers of cover ideals]{Symbolic powers of cover ideals of vertex decomposable graphs}
\title[Componentwise linearity of powers of cover ideals]
{Componentwise linearity of powers of cover ideals}
%\title[Symbolic powers of vertex cover ideals]{Symbolic powers of vertex cover ideals}

\dedicatory{Dedicated to Professor J\"{u}rgen Herzog on the occasion of his 80th birthday}
  
\author{S. Selvaraja}
\email{selva.y2s@gmail.com}
\address{Chennai Mathematical Institute, H1, SIPCOT IT Park, Siruseri, Kelambakkam, Chennai,
INDIA - 603103.}
\author{Joseph W. Skelton}
\email{jskelton@tulane.edu}
\address{Tulane University, Department of Mathematics, 6823 St. Charles Ave., New Orleans,
LA 70118, USA}

\thanks{AMS Classification 2010: 13D02, 13F20, 05C25, 05E40} 
\keywords{cover ideal, symbolic power, componentwise linear, 
  vertex decomposable graphs} 
\maketitle

\begin{abstract}

Let $G$ be a finite simple graph and $J(G)$ denote 
its vertex cover ideal in a polynomial ring over a field. % $\mathbb{K}$.
The $k$-th symbolic power of $J(G)$ is denoted by $J(G)^{(k)}$. 
In this paper, we give a criteria for cover ideals of vertex decomposable graphs to 
have the property that all their symbolic powers are not componentwise linear. Also,
we give a necessary and sufficient condition on $G$ so that $J(G)^{(k)}$ is a
componentwise linear ideal for some (equivalently, for all) $k \geq 2$ when
$G$ is a graph such that $G \setminus N_G[A]$ has a simplicial vertex for any
independent set $A$ of $G$. Using this result,
we prove that  $J(G)^{(k)}$ is a
componentwise linear ideal for several
classes of graphs for all  $k \geq 2$. In particular,
if $G$ is a bipartite graph, then 
$J(G)$ is a componentwise linear ideal if and only if $J(G)^k$ is a componentwise linear
ideal for some (equivalently, for all) $k \geq 2$.
 \end{abstract}

\section{Introduction}
Let $R=\mathbb{K}[x_1,\ldots,x_n]$ be the polynomial ring in $n$ variables 
over a field $\mathbb{K}$.
In \cite{HerHibi}, Herzog 
 and Hibi introduced the concept of componentwise linear ideals.
A homogeneous ideal $I$ is called {\it componentwise linear} 
 if for each $\ell$, the ideal generated
 by all degree $\ell$ elements of $I$ has a linear resolution.
 It can be easily seen that ideals with linear resolution are componentwise linear.
 Componentwise linear ideals behave with respect to several properties 
 very much like ideals with a linear resolution (see, for example \cite[Section 8.2]{Herzog'sBook}).
 %Componentwise linear ideals have a number of algebraic and combinatorial 
 %properties that make them interesting to study, see \cite{Herzog'sBook}.
 %Ideals with linear quotients were defined by Herzog and Takayama \cite{HerTak}
 %in connection to their work on minimal free resolution of monomial ideals.
 A monomial ideal $I \subset R$ is said to have \textit{linear quotients}
 if there is an ordering $u_1 < \cdots < u_m$ on the minimal monomial generators of $I$ 
 such that for every $2 \leq i \leq m $ the ideal 
 $((u_1,\ldots,u_{i-1}) : (u_i))$ is generated by a subset of $\{x_1,\ldots, x_n\}$.
 This notion was introduced by  Herzog and Takayama in \cite{HerTak}.  
 %If a monomial ideal has linear quotients, then it has componentwise linear quotients \cite{JahanZheng}, 
 %and hence it is componentwise linear.

Ordinary and symbolic powers of ideals have been extensively studied for over two decades.
We refer the reader to \cite{EHHV20, DDAGHN} for a 
review of results in the literature.
%Let $R=\mathbb{K}[x_1,\ldots,x_n]$ be the polynomial ring in $n$ variables 
%over a field $\mathbb{K}$. 
For any arbitrary ideal $I \subseteq R$, the \textit{$k$-th symbolic power} of 
$I$ is the ideal
\[
 I^{(k)}=\bigcap\limits_{\mathfrak{p}\in \ass(I)}(I^kR_{\mathfrak{p}} \cap R),
\]
where $\ass(I)$ is the set of associated primes of $I$ and $R_{\mathfrak{p}}$ 
is the ring $R$ localized at the prime ideal $\mathfrak{p}$.
In the case that $I$ is a squarefree monomial ideal with primary decomposition
$I=\mathfrak{p}_1 \cap \cdots \cap \mathfrak{p}_r$, its $k$-th
symbolic power is given by $I^{(k)}=\mathfrak{p}_1^k \cap \cdots \cap \mathfrak{p}_r^k$
(\cite[Proposition 1.4.4]{Herzog'sBook}).
In this paper, we study  the symbolic powers of unmixed squarefree monomial ideals of height
two. These ideals are naturally associated to finite simple graphs and are 
called \textit{cover ideals}.
Ordinary and symbolic powers of edge/cover ideals of graphs have been studied by many 
authors (cf. \cite{EHHV20,DHNT20,Nursel,jayanthan,JS21,KK20,Fatemesh11,Fatemesh14,selva1,Fakhari}).

 Let $G=(V(G),E(G))$ denote a finite, simple (no loops, no multiple edges), undirected graph with 
 vertex set $V(G) =\{x_1,\ldots,x_n\}$ and edge set $E(G)$.
 A { \it minimal vertex cover} of $G$ is a subset $V \subseteq V(G)$ such that each 
 edge has at least one 
 vertex in $V$ and no proper subset of $V$ has the same property. 
 For a graph $G$, by identifying the vertices with variables in 
$R=\mathbb{K}[x_1,\ldots,x_n]$,  two squarefree monomial ideals are associated to $G$:
the edge ideal $I(G)$ generated by all monomials $x_ix_j$ with 
$\{x_i,x_j\} \in E(G)$, and the vertex cover ideal $J(G)$ 
generated by monomials $\prod\limits_{x_i \in V} x_i$ for all 
minimal vertex covers $V$ of $G$. This identification gives a one-to-one correspondence between a graph and its edge and cover ideals. 
The vertex cover ideal of a graph  $G$ is the Alexander dual of its edge ideal, i.e.,
$J(G)=I(G)^\vee=\bigcap\limits_{\{x_i, x_j\}\in E(G)} (x_i,x_j).$
The study of edge ideals and cover ideals from both the algebraic and 
combinatorial points of view has become a very active area of research in 
commutative algebra
(cf. \cite{BFH15, EHHV20, crupi, DHNT20, eagon, FH, fv, GRV05, HerHibi, Herzog'sBook, HHM20,
HerHibiOhsugi, HerReiWelker, HerTak, JahanZheng, JS21, Fatemesh11, Fatemesh14, Wood2}).

A graph $G$ is said to be \textit{vertex decomposable}({\it shellable})
if its independence complex  $\Delta(G)$ has  this  property (see Section \ref{pre} for the 
definition). 
A  graph $G$ is  called  \textit{sequentially  Cohen-Macaulay},  
if the quotient ring $R/I(G)$ is sequentially  Cohen-Macaulay.  
We have the chain of implications:
\begin{align*}
 \text{vertex-decomposable} \Longrightarrow \text{shellable}
 \Longrightarrow \text{sequentially
 Cohen-Macaulay.}
\end{align*}  
In \cite{eagon}, Eagon and Reiner's work with squarefree monomial ideals can be applied to these graph ideals giving us that $G$ is Cohen-Macaulay if and only if 
$I(G)^\vee = J(G)$ has a linear resolution. In \cite{HerHibi,HerReiWelker}, the authors
 proved that 
 $G$ is sequentially Cohen-Macaulay  if and only if 
 $J(G)$  is componentwise linear.  
 Francisco and Van Tuyl  proved that 
 if $G$ is a chordal graph, then $G$ is sequentially Cohen-Macaulay
 and hence $J(G)$ is componentwise linear \cite{fv}. %Now one can ask the following question:
%Given a (sequentially) Cohen-Macaulay graph, what can be said about the powers of its vertex cover ideal? 
   In \cite{HerHibiOhsugi}, Herzog, Hibi, and Ohsugi gave a condition on homogeneous ideals 
   having the property  that all their ordinary powers are componentwise linear. 
   They also conjectured that all powers of the vertex cover ideals of
   chordal graphs are componentwise linear.  
 There has been very little progress made on this conjecture except for very few classes 
 like generalized star graphs, tree, star graph based on a complete graphs,
 Cohen-Macaulay chordal graphs, bi-clique graphs, Cameron-Walker graphs whose bipartite graph is a
 complete bipartite graph \cite{HHM20,  HerHibiOhsugi, KK20, Fatemesh11}.
 The reader can refer to \cite{Nursel, HV22, Fatemesh14}
 for some recent articles where powers of componentwise linear cover ideals were studied.

 Symbolic powers of componentwise linear cover ideals are relatively less 
 explored than powers of componentwise linear cover ideals in the literature.
 For example, Fakhari \cite{Fakhari} proved that  if $G$ is a 
   Cohen-Macaulay and very well-covered graph,
   then  $J(G)^{(k)}$ is componentwise linear for all $k \geq 2$. Also, he 
   characterized all graphs $G$ with the property that $J(G)^{(k)}$ has a linear 
   resolution for some (equivalently, for all) $k \geq 2$ \cite{Se20}.
   In \cite{DHNT20}, Dung, Hien, Nguyen and Trung proved that all symbolic powers of the cover ideal 
   of $G \cup W(V(G))$, the graph obtained from $G$ by adding a whisker to each vertex in $V(G)$,  
   are componentwise linear.
   Then the first author of this paper proved that one only needs to 
   whisker at the vertices of a vertex cover of $G$ to get a new graph for which 
   all symbolic powers of the cover ideal are componentwise linear \cite{selva1}. 
   Thereafter, the second author along with Gu and  H\`a generalized this result, proving that the 
   same conclusion holds true after adding whiskers at the vertices of a cycle cover 
   of the graph \cite{GTS20}.

   In general, if $G$ is a sequentially Cohen-Macaulay graph, then the symbolic powers 
   of $J(G)$ need not be componentwise linear (\cite[Example 4.4]{selva1}). On the 
   other hand, we prove that if $J(G)^{(k)}$ and $J(G)^{(k+1)}$ are not componentwise 
   linear ideals for some $k \geq 1$, then $J(G)^{(k')}$ is not componentwise linear for all $k' \geq k$ (Theorem \ref{not-cl}). Therefore, one may ask:
   \begin{question}\label{question}
    Find a necessary and sufficient condition on $G$ such that $J(G)^{(k)}$ is a 
    componentwise linear ideal for all $k \geq 2$ when 
    $G$ is sequentially Cohen-Macaulay graph.
   \end{question}
   This paper revolves around the above question.
   We now restrict our attention to cover ideals of vertex decomposable graphs.   
   Let $G$ be a vertex decomposable graph. 
   To each vertex decomposable graph $G$, we associate the spanning bipartite subgraph 
   $\B_G$ of $G$ with partitions $\{x_{\gamma(1)},\ldots,x_{\gamma(r)}\} \sqcup 
   \{x_{\alpha(1)},\ldots,x_{\alpha(l)}\}$ and 
   $E(\B_G)=\{\{x_{\gamma(i)},x_{\alpha(j)}\} \mid 1 \leq i \leq r,~1 \leq j \leq l\}$,
   where $(x_{\alpha(1)},\ldots,x_{\alpha(l)})$ is a shedding order of $G$ and 
   $x_{\gamma(1)},\ldots,x_{\gamma(r)}$ are isolated vertices of 
   $G \setminus \{x_{\alpha(1)},\ldots,x_{\alpha(l)}\}$
   (Construction \ref{intr-cons}). 
   %The first main result of the paper partially answers Question \ref{question}.
   First, we give a necessary condition on $\B_G$ for $J(G)^{(k)}$ to be componentwise linear
   for all $k \geq 2$. More precisely, if there exists an independent 
   set $A$ of $G$ such that
 $\B_{G \setminus N_G[A]}$ is not vertex decomposable, then 
$J(G)^{(k)}$ is not a componentwise linear ideal for all $k \geq 2$ (Theorem \ref{suff-cond1}).
 Our second main result addresses the converse of above result for 
 certain class of vertex decomposable graphs. We  consider the 
 class of graphs $G$ such that 
 $G \setminus N_G[A]$ has a simplicial vertex for 
 any independent  set $A$ of $G$.
 In \cite{Russ11}, Woodroofe proved that if
 $G$ is a graph such that $G \setminus N_G[A]$ has a simplicial vertex for 
 any independent
 set $A$ of $G$, then $G$ is a vertex decomposable graph.
% We call a graph $G$ is a \textit{$W$-graph} 
%if $G$ satisfies the above hypothesis.
 This family of  graphs is rich, since it includes all
 chordal graphs, simplicial graphs, vertex decomposable bipartite graphs, etc.
 (see Remark \ref{wgraphs} for more classes of graphs).
 We prove the following result.
 \begin{theorem} (Theorem \ref{main})
  Let $G$ be a graph such that $G \setminus N_G[A]$ has a simplicial vertex for any 
  independent set $A$ of $G$. Then the following are
equivalent:
\begin{enumerate}
 \item $\B_{G\setminus N_G[A]}$ is a vertex decomposable graph
 for any independent set $A$ of $G$;
 \item $J(G)^{(k)}$ has linear quotients for all $k \geq 1$;
 \item $J(G)^{(k)}$ is a 
 componentwise linear ideal for all $k \geq 1$;
 \item $J(G)^{(k)}$ is a 
 componentwise linear ideal for some $k \geq 2$.
\end{enumerate}
 \end{theorem}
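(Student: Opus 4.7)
The equivalences split naturally. First, $(2) \Rightarrow (3)$ is immediate. Next, $(3) \Rightarrow (1)$ follows as the contrapositive of Theorem \ref{suff-cond1}: if $(1)$ fails then some $\B_{G \setminus N_G[A]}$ is not vertex decomposable, which by that theorem forces $J(G)^{(k)}$ to fail to be componentwise linear for every $k \geq 2$, contradicting $(3)$. The substantive direction is therefore $(1) \Rightarrow (2)$.

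I would first dispatch the case $k = 1$. By Woodroofe's theorem (cited in the introduction), the hypothesis that $G \setminus N_G[A]$ has a simplicial vertex for every independent set $A$ implies that $G$ itself is vertex decomposable, hence sequentially Cohen-Macaulay; the Herzog-Reiner-Welker theorem then yields that $J(G)$ is componentwise linear.

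For $k \geq 2$ I would induct on $|V(G)|$. Taking $A = \emptyset$ in the hypothesis produces a simplicial vertex $v$ of $G$ with $N_G(v) = \{u_1,\dots,u_r\}$ forming a clique. Let $G' = G \setminus \{v\}$ and $G'' = G \setminus N_G[v]$. A first task is to verify that each of $G'$ and $G''$ inherits both the simplicial-vertex hypothesis and condition $(1)$: every independent set of $G'$ is an independent set of $G$, while for an independent set $A'$ of $G''$ the set $A' \cup \{v\}$ is independent in $G$ and satisfies $G'' \setminus N_{G''}[A'] = G \setminus N_G[A' \cup \{v\}]$; compatibility of the spanning bipartite subgraphs $\B$ with these restrictions must be read off from Construction \ref{intr-cons}.

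The central combinatorial-algebraic step is to decompose $J(G)^{(k)}$ using the simplicial structure at $v$. A minimal generator of $J(G)^{(k)}$ is a monomial $\prod_{x} x^{a_x}$ with $a_x + a_y \geq k$ for every edge $\{x,y\}$ of $G$. Stratifying by the value $s = a_v \in \{0,1,\dots,k\}$, simpliciality forces $a_{u_i} \geq k-s$ for all $i$, so each stratum factors as $v^s \cdot (u_1 \cdots u_r)^{k-s}$ times a generator of a natural modification of $J(G')^{(k)}$ or $J(G'')^{(k)}$. By induction these modifications are componentwise linear. To assemble componentwise linearity of the full ideal I would analyze the short exact sequences associated with the colons $J(G)^{(k)} : v^s$ and the sums $J(G)^{(k)} + (v^{s+1})$, or equivalently exhibit a linear-quotients ordering refining the stratification.

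The main obstacle is precisely this assembly step: passing from componentwise linearity of the strata to componentwise linearity of their sum. Existing techniques for symbolic powers of cover ideals, as in the works of Fakhari, Dung-Hien-Nguyen-Trung, and Gu-H\`a-Skelton cited earlier, suggest the cleanest approach is to produce an explicit linear-quotients ordering on the minimal monomial generators and to verify that each colon $(u_1,\dots,u_{i-1}) : u_i$ is generated by variables. Setting this ordering up correctly, while tracking which variables arise in the colons after symbolic powering, will be the most delicate part of the argument; one might further invoke Theorem \ref{not-cl} to reduce the verification to a finite set of small values of $k$.
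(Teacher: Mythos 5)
Your handling of $(2)\Rightarrow(3)$, $(3)\Rightarrow(1)$ (via Theorem~\ref{suff-cond1}) and of the case $k=1$ matches the paper, but the substantive direction $(1)\Rightarrow(2)$ is not proved: it is a plan whose decisive step is left open. Stratifying the generators of $J(G)^{(k)}$ by the exponent of a simplicial vertex is straightforward; the entire difficulty is the ``assembly step'' (a linear-quotients order, or control of the colons/short exact sequences), which you explicitly defer. Moreover, as sketched your induction uses only the simplicial-vertex hypothesis and never actually invokes condition~(1) beyond the unverified remark that it is ``inherited.'' Condition~(1) cannot be idle: the paper itself exhibits graphs satisfying the simplicial-vertex hypothesis --- e.g.\ the chordal graph $\Gamma_{2,1,1}$ discussed before Corollary~\ref{n-clique} --- for which $J(G)^{(k)}$ fails to be componentwise linear for every $k\ge 2$, so an argument running purely on simplicial vertices of the deletions must break somewhere. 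The inheritance claims are themselves gaps: the hypothesis and condition~(1) pass readily to $G''=G\setminus N_G[v]$ (take $A'\cup\{v\}$), but not obviously to $G'=G\setminus\{v\}$, which is not of the form $G\setminus N_G[A]$; and since $\B_H$ depends on a chosen shedding order, ``compatibility with Construction~\ref{intr-cons}'' is not something one can just read off. Finally, the proposed shortcut via Theorem~\ref{not-cl} is backwards: that theorem only propagates \emph{failure} of componentwise linearity upward (within parity classes), equivalently componentwise linearity downward, so it cannot reduce ``all $k\ge 2$'' to finitely many small values of $k$.

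For comparison, the paper never works with the generators of $J(G)^{(k)}$ directly. It passes to Fakhari's graph $G_k$ of Construction~\ref{construction} and proves, by induction on $k+|V(G)|$, that $G_k$ again satisfies the simplicial-vertex hypothesis: an independent set $B$ of $G_k$ is split by rows; if $B$ meets row $1$ one reduces to $(G\setminus N_G[\,\cdot\,])_k$, and this is where condition~(1) is transported to the smaller graph, while otherwise one reduces to $G_{k-2}$ and upgrades the simplicial vertex by a direct combinatorial argument on neighborhoods in $G_k$. Woodroofe's theorem (Theorem~\ref{russ-rs}) then gives vertex decomposability of $G_k$, so $J(G_k)$ has linear quotients, and Lemma~\ref{fak-result} together with Corollary~\ref{pol_reg} depolarizes this to linear quotients, hence componentwise linearity, of $J(G)^{(k)}$. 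To complete your route you would have to construct the linear-quotients ordering explicitly and identify where the vertex decomposability of the graphs $\B_{G\setminus N_G[A]}$ enters; otherwise the argument should be rerouted through $G_k$ and polarization as above.
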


It may be noted that, as of now, there are no combinatorial characterizations for the
class of vertex decomposable bipartite graphs such as unmixed bipartite graphs or  
very well covered graphs.
However, Van Tuyl proved that the sequential Cohen-Macaulayness of a bipartite graph $G$ is equivalent to the
vertex decomposability of $G$ \cite[Corollary 2.12]{adam}. Also, Van Tuyl and Villarreal give a recursive 
characterization for a sequentially Cohen-Macaulay bipartite graph \cite{VanVilla}.
Using these results, one can verify whether a bipartite graph is vertex decomposable. 

% as follows:
%If $G$ is bipartite, then $G$ is sequentially Cohen-Macaulay if and only if 
 %there are adjacent vertices $x$ and $y$ with $\deg_G(x) = 1$ such that the bipartite 
 %graphs $G'= G \setminus N_G[x]$ and $G''= G \setminus N_G[y]$ are 
 %sequentially Cohen-Macaulay, 

   In \cite{GTS20},the authors raised the following question: find a necessary and sufficient condition on a subset $S$ of the vertices in a graph 
   $G$ such that $J(G \cup W(S))^{(k)}$ is componentwise linear  where 
   $G \cup W(S)$ is the graph obtained from $G$ by adding a whisker to each vertex in $S$
   for all $k \geq 1$. As a consequence of our investigation, we partially answers the above
   question (Corollary \ref{whisker-cor}, Corollary \ref{main-cor}, Corollary \ref{3-ver}).
   As an immediate consequence we  
 recover and extend the main results of
 \cite{DHNT20,GTS20, KK20, selva1}.
 Dung, Hien, Nguyen, and Trung asked the following question:
 classify all star graphs based on a complete graph $G$ such that all the symbolic powers of $J(G)$
 are componentwise linear \cite[Question 5.13]{DHNT20}.
 We answer this question affirmatively in Corollary \ref{star-main}.
 Herzog, Hibi and Moradi proved that if $G$ is a bi-clique graph 
 (see Definition \ref{n-clique-def}), then 
 $J(G)^k$ is componentwise linear for all $k \geq 2$ \cite{HHM20}.
 In this context it is natural to ask what happens when we consider the 
 symbolic powers of
cover ideals of $n$-clique graphs? 
As a consequence of our investigation, we obtain a necessary and sufficient condition
on $G$ so that  $J(G)^{(k)}$ is a
componentwise linear ideal for  all $k \geq 2$ (Corollary \ref{n-clique}).
 
 We also study the powers of cover ideals of bipartite graphs.
 In \cite[Theorem 2.2]{FS10}, Mohammadi and Moradi 
proved that if $G$ is a Cohen-Macaulay bipartite graph, then 
$J(G)^k$ has linear resolution for all $k \geq 1$.
%Fakhari proved that, let $G$ be a bipartite graph. Then 
Fakhari proved, for a bipartite graph $G$, the cover ideal 
$J(G)$ has linear resolution if and only if $J(G)^k$ has linear resolution for all
$k \geq 2$ (\cite[Corollary 3.7]{Fakhari}).  
 %Note that if $G$ is a vertex decomposable bipartite graph, then $G$ is a $W$-graph.
 By \cite[Corollary 2.6]{GRV05}, for every bipartite graph $G$, we have 
 $J(G)^k=J(G)^{(k)}$ for all $k \geq 2$.
 As an application of Theorem \ref{main}, we prove that if $G$ is a 
 bipartite graph, then $J(G)$ is a componentwise linear ideal if and only if 
 $J(G)^k$ is a componentwise linear ideal for some (equivalently, for all) $k \geq 2$,
 (Theorem \ref{main-bipartite}).
 Furthermore, we recover Kumar and Kumar's result (\cite[Corollary 3.4]{KK20}) on trees in Corollary \ref{tree-kk}.

 Our paper is organized as follows. In Section \ref{pre}, we collect the terminology and 
 preliminary results that are essential for the rest of the paper. 
 In Section \ref{new}, we give a criteria for cover ideals to have the property that
all their symbolic powers are not componentwise linear.
 In Section \ref{symbolic}, we prove that $J(G)^{(k)}$ is a componentwise linear
 ideal for all $k \geq 2$ when $G$ is a $W$-graph. We study the case when $G$ is a
 bipartite graph in Section \ref{bipartite} and show that in this case also,
 $J(G)^{k}$ is a componentwise linear
 ideal for all $k \geq 2$ when $J(G)$ is componentwise linear.

\section{Preliminaries}\label{pre}

In this section, we set up the basic definitions and notation needed for the main results. 
For a graph $G$, let $V(G)$ and $E(G)$ denote the set of all
vertices and the set of all edges of $G$, respectively.
For $\{x_1,\ldots,x_r\}  \subseteq V(G)$, let $N_G(x_1,\ldots,x_r) = 
\{y \in V (G)\mid \{x_i, y\} \in E(G)~ 
\text{for some $1 \leq i \leq r$}\}$ be the set of neighbors of $x_1,\ldots,x_r$
and $N_G[x_1,\ldots,x_r]= N_G(x_1,\ldots,x_r) \cup \{x_1,\ldots,x_r\}$. 
%and set $N_G[u]=N_G(u) \cup \{u\}$.
The cardinality of $N_G(x)$ is called the \textit{degree} of $x$ in $G$ and is denoted by
$\deg_G(x)$. 
A \textit{spanning subgraph} is a subgraph that contains all the vertices of the original graph.
A subgraph $H \subseteq G$  is called {\it induced} if for $u, v
\in V(H)$, $\{u,v\} \in
E(H)$ if and only if $\{u,v\} \in E(G)$.
A subset
$X$ of $V(G)$ is called {\it independent} if there is no edge $\{x,y\} \in E(G)$ 
for $x, y \in X$. 
A graph $G$ is called {\it bipartite} if there are two disjoint independent subsets $X$, $Y$ of
$V(G)$ such that $V(G)=X \sqcup Y$.
A {\it complete graph} is a graph in which each pair of vertices is connected by an edge. 
 %The complete graph with $n$ vertices is denoted $K_n$.
 A subset $U$ of $V(G)$ is said to be a \textit{clique} if the induced subgraph with vertex set $U$
is a complete graph.  
%A vertex $v$ is said to be \textit{free vertex} if it belongs to exactly one
%maximal clique. 
A \textit{simplicial vertex} of a graph $G$ is a vertex $x$ such that the neighbors of $x$ form a clique in
$G$. %The vertex $x$ is a {\it simplicial vertex} of $G$ if $N_G[x]$ is a complete graph.  
Note that if  $\deg_G(x)=1$, 
then $x$ is a simplicial vertex of $G$.  
 For $S \subseteq V(G)$, let $G \cup W(S)$ denote the graph on the 
vertex set $V(G) \cup \{z_x \mid x \in S\}$ whose edge set is
$E(G \cup W(S))=E(G) \cup \Big\{ \{x,z_x\} \mid x \in S\Big\}$.
A graph $G$ is \textit{chordal} if every induced cycle in $G$ has length 3.

We recall the relevant background on simplicial complexes.  
 A \textit{simplicial complex} $\Delta$ on $V = \{x_1,\ldots,x_n\}$ is
a collection of subsets of $V$ such that: 
\begin{enumerate}
 \item[(i)] $\{x_i\}\in \Delta $ for $1 \leq i\leq n$, and
 \item[(ii)] if $F \in \Delta$ and $F' \subseteq F$, then $F' \in \Delta$.
\end{enumerate}
Elements of $\Delta$ are called the \textit{faces} of $\Delta$, and the maximal elements, with 
respect to inclusion, are called the \textit{facets}. 
%A simplicial complex is \textit{pure} if all its facets have the same cardinality.
The link of a face $F$ in
$\Delta$ is $$\link_\Delta(F) = \{F' \mid F' \cup F \text{ is a face in
} \Delta, ~F' \cap F = \emptyset \}.$$
The \textit{deletion} of a face $F$ in 
$\Delta$ is $\Delta \setminus F=\{H \in \Delta \mid H \cap F =\emptyset\}$.

A simplicial complex $\Delta$ is
recursively defined to be {\em vertex decomposable} if it is either a
simplex or else has some vertex $v$ so that 
\begin{enumerate}
  \item[(i)] both $\Delta \setminus v$ and $\link_\Delta (v)$ are vertex decomposable, and
  \item[(ii)] no face of $\link_\Delta (v)$ is a facet of $\Delta \setminus v$.
\end{enumerate}

A simplicial complex $\Delta$ is \textit{shellable} if the facets of 
$\Delta$ can be ordered $F_1,\ldots,F_s$ such that for all $1 \leq i< j \leq s$, there 
exists some $v \in F_j \setminus F_i$ and some $\ell \in \{1,\ldots,j-1\}$ with
$F_j \setminus F_\ell=\{v\}$.

The \textit{independence complex} of $G$, denoted by $\Delta(G)$, is the simplicial
complex on $V(G)$ with face set 
$\Delta(G)=\{F \subseteq V(G) \mid F \text{ is an independent set of $G$ } \}.$
A graph $G$ is said to be \textit{vertex decomposable (shellable)} if $\Delta(G)$ is a
vertex decomposable (shellable) simplicial complex. 
In \cite{Wood2}, Woodroofe translated the definition of vertex decomposable
to graphs.
\begin{definition}\label{def-vertdecom} \cite[Lemma 4]{Wood2}
 A graph $G$ is recursively defined to be vertex decomposable if 
 $G$ is totally disconnected (with no edges) or if 
 \begin{enumerate}
  \item there is a vertex $x$ in $G$ such that $G \setminus x$ and 
  $G \setminus N_G[x]$ are both vertex decomposable, and
  \item no independent set in $G\setminus N_G[x]$ is a maximal independent set 
  in $G\setminus x$.
 \end{enumerate}
 \end{definition}
A \textit{shedding vertex} of $x$ is any vertex which satisfies 
 Condition (2) of Definition \ref{def-vertdecom}. 
 A graded $R=\K[x_1,\ldots,x_n]$-module $M$ is called \textit{sequentially Cohen-Macaulay} %(over $\K$) 
if there exists a finite filtration of graded $R$-modules 
$0=M_0 \subset M_1 \subset \cdots \subset M_{r}=M$ 
such that each quotient $M_i/M_{i-1}$ is Cohen-Macaulay, and the Krull dimensions of the 
quotients satisfy:
$\dim(M_1/M_0) < \dim(M_2/M_1) < \cdots < \dim(M_r/M_{r-1}).$
A graph $G$ is a \textit{sequentially Cohen-Macaulay graph
(Cohen-Macaulay)}  if $R/I(G)$ is sequentially Cohen-Macaulay
(Cohen-Macaulay).
\iffalse
 Let $G$ and $H$ be graphs. 
If $G$ and $H$ disjoint graphs (i.e., $V (G) \cap V (H) = \emptyset)$, we denote the disjoint union of 
$G$ and $H$ by $G\coprod H$.

\begin{corollary}\cite[Corollary 7(1) and Lemma 20]{Wood2}\label{cor-shedding} \mbox{}
\begin{enumerate}
 \item Any neighbor of a simplicial vertex of $G$ is a shedding vertex of $G$.
 \item Let $G$ and $H$ be two graphs.
Then $G \coprod H$ is vertex decomposable if and only if $G$ and $H$ are vertex decomposable.
\end{enumerate}
\end{corollary}
\fi

The property of 
sequentially Cohen-Macaulayness/shellable/
vertex decomposability is preserved when forming a link.
The following result is used repeatedly throughout this paper.

\begin{theorem}\cite[Theorem 2.5]{BFH15}, \cite[Theorem 2.6 and Theorem 3.3]{VanVilla}
\label{ind-sub}
Let $G$ be a sequentially Cohen-Macaulay/shellable/vertex decomposable graph.
Then $G\setminus N_G[A]$ is a sequentially 
Cohen-Macaulay/shellable/vertex decomposable graph for any independent 
set $A$ of $G$. This includes the case $A=\{x\}$ for any 
$x \in V(G)$.
 %If $G$ is sequentially Cohen-Macaulay graph/shellable/vertex decomposable, then 
 %$G \setminus N_G[x]$ is sequentially Cohen-Macaulay graph/shellable/vertex decomposable
 %for any $x \in V(G)$. In other words, 
 %if $G$ is sequentially Cohen-Macaulay graph/shellable/vertex decomposable, then 
 %$G \setminus N_G[A]$ is sequentially Cohen-Macaulay graph/shellable/vertex decomposable
 %for any independent set $A$ of $G$.
\end{theorem}

The following result shows that sequentially Cohen-Macaulay is a combinatorial property
for bipartite graphs.

\begin{theorem}\cite[Theorem 2.10]{adam}\label{adam-rs} Let $G$ be a bipartite graph. 
Then the following are equivalent: 
\begin{enumerate}
 \item $G$ is sequentially Cohen-Macaulay;
 \item $G$ is shellable;
 \item $G$ is vertex decomposable.
\end{enumerate}
\end{theorem}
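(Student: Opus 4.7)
The plan is to prove the nontrivial direction (1)$\Rightarrow$(3), since the implications (3)$\Rightarrow$(2)$\Rightarrow$(1) hold for arbitrary simplicial complexes (independence complexes included) by the classical results of Provan--Billera (vertex decomposable implies shellable) and Stanley (shellable implies sequentially Cohen--Macaulay). So I would only need to show that a sequentially Cohen--Macaulay bipartite graph is vertex decomposable, and proceed by induction on $|V(G)|$.

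For the base case, a graph with no edges is totally disconnected and hence vertex decomposable by Definition \ref{def-vertdecom}. For the inductive step, I would invoke the Van Tuyl--Villarreal recursive characterization cited in the introduction: if $G$ is a sequentially Cohen--Macaulay bipartite graph, then there exist adjacent vertices $x, y \in V(G)$ with $\deg_G(x)=1$ such that both $G \setminus N_G[x]$ and $G \setminus N_G[y]$ are sequentially Cohen--Macaulay bipartite graphs. By the induction hypothesis, both of these graphs are vertex decomposable. The strategy is then to show that $y$ is a shedding vertex of $G$.

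I would verify the two conditions of Definition \ref{def-vertdecom} for the vertex $y$. For condition (1): $G\setminus N_G[y]$ is vertex decomposable by induction. For $G\setminus y$, I would observe that since $N_G(x)=\{y\}$, removing $y$ leaves $x$ as an isolated vertex, so $G\setminus y$ is the disjoint union of the isolated vertex $x$ and the graph on $V(G)\setminus\{x,y\}$, which is exactly $G\setminus N_G[x]$; since the latter is vertex decomposable by induction and adjoining an isolated vertex preserves vertex decomposability (one can simply add the isolated vertex to every facet of $\Delta(G\setminus N_G[x])$), we conclude that $G \setminus y$ is vertex decomposable. For condition (2): if $A \subseteq V(G)\setminus N_G[y]$ is independent in $G\setminus N_G[y]$, then $x \notin A$ and $y\notin A$; since the only neighbor of $x$ in $G$ is $y$, the set $A \cup \{x\}$ is independent in $G\setminus y$, which shows that $A$ is not a maximal independent set of $G\setminus y$. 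Therefore $y$ is a shedding vertex, and $G$ is vertex decomposable.

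The only subtle point is pinning down the existence of the pair $(x,y)$ with $\deg_G(x)=1$ in the sequentially Cohen--Macaulay case; this is the content of the Van Tuyl--Villarreal result \cite{VanVilla}, which I would quote rather than reprove. With that black box in hand, the argument is a clean induction, and the verification of the shedding condition reduces to the single observation that an isolated vertex created by deleting $y$ obstructs the maximality of any independent set that avoided $N_G[y]$.
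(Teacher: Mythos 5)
Your proposal is correct, but note that the paper does not prove this statement at all — it is quoted from \cite[Theorem 2.10]{adam}, so there is no internal proof to compare against. Your argument (the implications $(3)\Rightarrow(2)\Rightarrow(1)$ from the general theory, plus the induction for $(1)\Rightarrow(3)$ using the Van Tuyl--Villarreal recursion of Theorem \ref{adam-vila-rs} and the verification that the neighbor $y$ of the degree-one vertex $x$ is a shedding vertex, with $G\setminus y$ being $G\setminus N_G[x]$ plus an isolated vertex) is exactly the standard proof given in the cited source, so it is essentially the same approach as the one the paper relies on.
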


In \cite{VanVilla}, Van Tuyl and Villarreal gave a recursive characterization 
for a sequentially Cohen-Macaulay bipartite graph as follows.

\begin{theorem}\cite[Corollary 3.11]{VanVilla}\label{adam-vila-rs}
 Let $G$ be a bipartite graph. Then $G$ is sequentially Cohen-Macaulay if and only if 
 there are adjacent vertices $x$ and $y$ with $\deg_G(x) = 1$ such that the bipartite 
 graphs $G'= G \setminus N_G[x]$ and $G''= G \setminus N_G[y]$ are 
 sequentially Cohen-Macaulay.
\end{theorem}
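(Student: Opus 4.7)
The plan is to treat each direction separately, systematically using Theorem \ref{adam-rs} to toggle between sequential Cohen-Macaulayness and vertex decomposability for bipartite graphs throughout.

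For the reverse implication, I would verify that $y$ is a shedding vertex of $G$ with both $G\setminus y$ and $G\setminus N_G[y]$ vertex decomposable; Definition \ref{def-vertdecom} then gives that $G$ itself is vertex decomposable, hence sequentially Cohen-Macaulay by Theorem \ref{adam-rs}. Since $\deg_G(x)=1$ and $N_G(x)=\{y\}$, the vertex $x$ becomes isolated in $G\setminus y$, and the induced subgraph on the remaining vertices coincides with $G'=G\setminus N_G[x]$; thus $G\setminus y$ is the disjoint union of $G'$ with an isolated vertex, and is vertex decomposable since $G'$ is (hypothesis together with Theorem \ref{adam-rs}). Likewise $G\setminus N_G[y]=G''$ is vertex decomposable by hypothesis. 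For the shedding condition, let $I$ be any independent set in $G\setminus N_G[y]$; then $x\notin I$ (because $x\in N_G(y)$), and $x$ is isolated in $G\setminus y$, so $I\cup\{x\}$ is independent in $G\setminus y$ and strictly contains $I$, showing $I$ cannot be maximal there.

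For the forward implication, I would isolate the combinatorial core as the following lemma: \emph{every sequentially Cohen-Macaulay bipartite graph with at least one edge contains a vertex of degree one}. Granting this lemma, let $x$ be such a leaf with unique neighbor $y$; since $\{x\}$ and $\{y\}$ are each independent sets of $G$, Theorem \ref{ind-sub} immediately yields that $G\setminus N_G[x]$ and $G\setminus N_G[y]$ are both sequentially Cohen-Macaulay, as required.

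The main obstacle is establishing the leaf-existence lemma, which does not seem to follow formally from the previously cited results. My approach would be induction on $|V(G)|$: translate via Theorem \ref{adam-rs} to the vertex decomposable setting, choose a shedding vertex $v$ of $G$, and apply the inductive hypothesis to the smaller vertex decomposable bipartite graph $G\setminus N_G[v]$ to produce a leaf $\ell$ in it (the edgeless base case being trivial). The delicate step is to promote $\ell$ to a leaf of the full graph $G$: one must analyze the edges joining $N_G(v)$ to $\ell$, exploiting both the bipartite structure (all neighbors of any vertex lie in a single part) and the shedding property of $v$ to either certify that $\ell$ remains a leaf in $G$ or, failing that, to locate another degree-one vertex arising from the interaction of $N_G[v]$ with the inductive leaf. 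This combinatorial step is where all the real work lies.
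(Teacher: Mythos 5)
Your backward implication is correct and complete in outline: $y$ is a shedding vertex because the leaf $x$ is isolated in $G\setminus y$ and can be added to any independent set of $G\setminus N_G[y]$; $G\setminus y$ is $G'$ plus an isolated vertex (vertex decomposability being preserved under adding isolated vertices), $G\setminus N_G[y]=G''$, and Definition \ref{def-vertdecom} together with Theorem \ref{adam-rs} finishes it. Likewise, your reduction of the forward implication to the lemma ``every sequentially Cohen--Macaulay bipartite graph with an edge has a vertex of degree one'' is exactly right, since Theorem \ref{ind-sub} applied to the independent sets $\{x\}$ and $\{y\}$ then gives both required subgraphs.

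The genuine gap is that this leaf-existence lemma is the entire substantive content of the cited result of Van Tuyl and Villarreal, and your sketch does not prove it. The proposed induction via a shedding vertex $v$ breaks down concretely in two ways. First, $G\setminus N_G[v]$ may be edgeless (or empty) even though $G$ has many edges --- e.g.\ $G=K_{1,n}$ with $v$ its center, which is a shedding vertex --- so the inductive hypothesis produces no leaf $\ell$ at all, and precisely in such cases a separate argument is needed. Second, even when $\ell$ exists, all of its other $G$-neighbors may lie in the deleted set $N_G(v)$, and the shedding property of $v$ (a statement comparing maximal independent sets of $G\setminus N_G[v]$ and $G\setminus v$) gives no evident control on edges between $\ell$ and $N_G(v)$, so there is no mechanism to ``promote'' $\ell$ or to locate an alternative leaf. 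The standard proof runs along a different line: discard isolated vertices, observe that both parts $V_1$ and $V_2$ of the bipartition are facets of the (shellable) independence complex $\Delta(G)$, and apply the shelling condition to whichever part-facet occurs later, say $V_2$; this produces an earlier facet $F$ with $|V_2\setminus F|=1$, and any vertex of $F\cap V_1$ is then adjacent only to the single vertex of $V_2\setminus F$, hence has degree one. So the lemma you need is true, but as written your forward direction is incomplete at its crucial step, and the inductive route you outline would need substantial repair rather than just bookkeeping.
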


Linear quotients can  be used to verify that an ideal is componentwise linear:
\begin{theorem}\cite[Theorem 8.2.15]{Herzog'sBook}\label{lq-rs}
 Let $I$ be a homogeneous ideal, and suppose that $I$ has
linear quotients with respect to a minimal set of generators of $I$. Then $I$ is 
componentwise
linear.
\end{theorem}

In the study of symbolic powers of vertex cover ideals, 
Seyed Fakhari constructed a new graph $G_k$ 
whose vertex cover ideal is strongly related to the 
$k$-th symbolic power of the vertex cover ideal of $G$ \cite{Fakhari}.
We will make use of this construction extensively.
%This construction  has proved to be quite powerful, and we shall make use of it often.
 
 \begin{cons}\label{construction}
 Let $G$ be a graph with vertex set $V(G)= \{x_1,\ldots,x_n\}$ and $k \geq 1$
be an integer. We define the graph $G_k$ whose vertex set is
\[
 V(G_k)=\{x_{i,p} \mid 1 \leq i \leq n \text{ and } 1 \leq p \leq k\},
\]
and the edge set of $G_k$ is
\[
 E(G_k)=\Big\{\{x_{i,p},x_{j,q}\} \mid \{x_i,x_j\} \in E(G) \text{ and } p+q \leq k+1\Big\}.
\]
%Note that $(G \coprod H)_k=(G_k \coprod H_k)$, for any $k \geq 1$.
\end{cons}

We illustrate Construction \ref{construction} with the help of the following example.
\begin{example}
 Let $G=(V(G),E(G))$ be a graph with $V(G)=\{x_1,x_2\}$ and $E(G)=\{\{x_1,x_2\}\}$. 
 Then $G_2$ is the graph with vertex set $\{x_{1,1}, x_{1,2}, x_{2,1}, x_{2,2}\}$ and the 
 edge set of $G_2$ is
 \[
  E(G_2)=\{\{x_{1,p}, x_{2,q}\} \mid p+q \leq 3\}=\{\{x_{1,1},x_{2,1}\},
  \{x_{1,1},x_{2,2}\},\{x_{2,1}, x_{1,2}\}\}.
 \]

 \begin{figure}[H]
  \begin{tikzpicture}[scale=0.7]
%\clip(-0.2,0.23) rectangle (19.07,10.18);
\draw (2,5)-- (2,3);
\draw (5,5)-- (5,3);
\draw (7,3)-- (5,5);
\draw (7,5)-- (5,3);
\draw (10,5)-- (10,3);
\draw (12,3)-- (10,5);
\draw (12,5)-- (10,3);
\draw (14,5)-- (10,3);
\draw (10,5)-- (14,3);
\draw (12,5)-- (12,3);
\draw (1.77,2.17) node[anchor=north west] {$ G $};
\draw (5.88,2.04) node[anchor=north west] {$G_2$};
\draw (11.82,2.12) node[anchor=north west] {$G_3$};
\begin{scriptsize}
\fill [color=black] (2,5) circle (1.5pt);
\draw[color=black] (2.22,5.21) node {$x_1$};
\fill [color=black] (2,3) circle (1.5pt);
\draw[color=black] (2.42,3.01) node {$x_2$};
\fill [color=black] (5,5) circle (1.5pt);
\draw[color=black] (5.45,5.21) node {$x_{1,1}$};
\fill [color=black] (5,3) circle (1.5pt);
\draw[color=black] (5.58,3.03) node {$x_{2,1}$};
\fill [color=black] (7,3) circle (1.5pt);
\draw[color=black] (7.45,3.21) node {$x_{2,2}$};
\fill [color=black] (7,5) circle (1.5pt);
\draw[color=black] (7.45,5.21) node {$x_{1,2}$};
\fill [color=black] (10,5) circle (1.5pt);
\draw[color=black] (10.12,5.21) node {$x_{1,1}$};
\fill [color=black] (10,3) circle (1.5pt);
\draw[color=black] (9.49,3.06) node {$x_{2,1}$};
\fill [color=black] (12,3) circle (1.5pt);
\draw[color=black] (12.5,3.05) node {$x_{2,2}$};
\fill [color=black] (12,5) circle (1.5pt);
\draw[color=black] (12.08,5.32) node {$x_{1,2}$};
\fill [color=black] (14,5) circle (1.5pt);
\draw[color=black] (14.11,5.21) node {$x_{1,3}$};
\fill [color=black] (14,3) circle (1.5pt);
\draw[color=black] (14.11,3.21) node {$x_{2,3}$};
\end{scriptsize}
\end{tikzpicture}
 \end{figure}
Also  $G_3$ can be thought of as duplicating the vertices of a graph 3 times and 
 connecting them under the constraints of the construction. 
\end{example}

 Throughout this paper, $G_k$ denotes the graph as in Construction \ref{construction}.
 
 Polarization is a process to obtain a squarefree monomial ideal from
a given monomial ideal. For details of polarization we refer to 
\cite[Section 1.6]{Herzog'sBook}.
\begin{definition}
 Let $f=x_1^{m_1} \cdots x_n^{m_n}$ be a monomial in $R=\K[x_1,\ldots,x_n]$.
 Let $\widetilde{R}=\K[x_{11},x_{12},\ldots,x_{21},x_{22},\ldots,x_{n1},x_{n_2},\ldots]$. 
 Then a \textit{polarization} of $f$ in $\widetilde{R}$ is the squarefree monomial 
 $\widetilde{f}= x_{11} \cdots x_{1m_1}x_{21}\cdots x_{2m_2} \cdots x_{n1} \cdots x_{nm_n}$. 
 If $f_1,\ldots,f_m \in R$ are monomials and 
 $I = (f_1,\ldots,f_m)$, then we call the squarefree monomial ideal $\widetilde{I}$ generated by the polarization
 of the $f_i$'s in a larger polynomial ring $\widetilde{R}$,
 the polarization of $I$.
\end{definition}
 %Polarization is a process that creates a squarefree monomial ideal (in
%a possibly different polynomial ring) from a given monomial ideal
%\cite[Section 1.6]{Herzog'sBook}. If $I$ is a monomial ideal in $R$, then the 
%polarization of
%$I$ is denoted by $\widetilde{I} \subseteq \widetilde{R}$.  
 The following lemma, due to Seyed Fakhari, is used repeatedly throughout this paper
\begin{lemma}\cite[Lemma 3.4]{Fakhari}\label{fak-result}
 Let $G$ be a graph. For every integer $k \geq 1$, the ideal $\widetilde{(J(G)^{(k)})}$ is the vertex cover ideal of $G_k$.
\end{lemma}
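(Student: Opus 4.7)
The plan is to identify the minimal monomial generators on both sides and match them under polarization. Since $(x_i, x_j)^k$ consists of all monomials $u$ with $\deg_{x_i}(u) + \deg_{x_j}(u) \geq k$, the symbolic power $J(G)^{(k)} = \bigcap_{\{x_i, x_j\} \in E(G)} (x_i, x_j)^k$ is generated by monomials $u = x_1^{a_1}\cdots x_n^{a_n}$ satisfying $a_i + a_j \geq k$ for every edge of $G$, and a short reduction shows the minimal generators are exactly those coming from componentwise minimal tuples $(a_1,\ldots,a_n)$ with $0 \leq a_i \leq k$. The polarization of such a $u$ is the squarefree monomial $\widetilde{u} = \prod_i \prod_{p=1}^{a_i} x_{i,p}$, whose support is the ``staircase'' set $C_a = \{x_{i,p} : p \leq a_i\} \subseteq V(G_k)$. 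The goal then becomes showing that the minimal vertex covers of $G_k$ are precisely these staircase sets $C_a$ arising from minimal tuples, so that polarization sends minimal generators of $J(G)^{(k)}$ bijectively onto minimal generators of $J(G_k)$.

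The first step I would carry out is to verify that each such $C_a$ is a minimal vertex cover of $G_k$. For the covering property, if an edge $\{x_{i,p}, x_{j,q}\}$ of $G_k$ missed $C_a$, then $p > a_i$ and $q > a_j$ would give $p + q \geq a_i + a_j + 2 \geq k + 2$, contradicting $p + q \leq k+1$. For minimality, componentwise minimality of the tuple produces, for each $i$ with $a_i \geq 1$, a neighbor $j$ of $i$ in $G$ with $a_j \leq k - a_i$; this forces $a_j < k+1-p$ for every $p \leq a_i$, so deleting $x_{i,p}$ leaves the edge $\{x_{i,p}, x_{j,\,k+1-p}\}$ uncovered.

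The hard part will be the converse: every minimal vertex cover $C$ of $G_k$ must have staircase shape, i.e., $T_i := \{p : x_{i,p} \in C\} = \{1, 2, \ldots, |T_i|\}$ for every $i$. The key observation is that within each column the neighborhoods shrink as $p$ grows: every neighbor of $x_{i,p}$ in $G_k$ is also a neighbor of $x_{i,p'}$ whenever $p' \leq p$. So if some $T_i$ has a gap, say $p' < p$ with $p' \notin T_i$ and $p \in T_i$, then for every edge $\{x_{i,p}, x_{j,q}\}$ the companion edge $\{x_{i,p'}, x_{j,q}\}$ also lies in $G_k$; since $x_{i,p'} \notin C$, the covering property forces $x_{j,q} \in C$. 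Hence $x_{i,p}$ is redundant in $C$, contradicting minimality. This staircase lemma is the main obstacle, because the naive swap of $p$ for $p'$ only produces another cover of the same size rather than a smaller one; the right move is to delete the top of the column directly, which works precisely because the edge condition $p + q \leq k+1$ in Construction~\ref{construction} makes column neighborhoods nested. Combining the two directions, polarization sets up a bijection between minimal generators of $J(G)^{(k)}$ and minimal vertex covers of $G_k$, which establishes $\widetilde{J(G)^{(k)}} = J(G_k)$.
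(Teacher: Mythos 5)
The paper itself gives no proof of this lemma---it is quoted from Fakhari's article---so there is nothing internal to compare against; your argument is a correct, self-contained proof along the standard lines: since $J(G)^{(k)}=\bigcap_{\{x_i,x_j\}\in E(G)}(x_i,x_j)^k$, membership is exactly the system of inequalities $a_i+a_j\ge k$ over edges, and your staircase analysis (covering, minimality via the edge $\{x_{i,p},x_{j,k+1-p}\}$, and the nested-neighborhood argument ruling out gaps in a column) correctly identifies the minimal vertex covers of $G_k$ with the polarizations of the minimal generators of $J(G)^{(k)}$. The only step left implicit is that once a minimal cover $C$ is shown to be a staircase $C_a$, the covering property forces $a_i+a_j\ge k$ for every edge (take $p=a_i+1$, $q=a_j+1$) and minimality of $C$ forces componentwise minimality of $a$ (otherwise $C_{a-e_i}\subsetneq C_a$ would still be a cover by your first step); both are immediate from what you have already proved, so the proof is complete.
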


Let $M$ be a graded $R$ module. Recall that for non-negative
integers $i, j$ the symbol $\beta_{i,j}(M)$ denotes the $(i,j)$-th \emph{graded Betti
number} of $M$ \cite[Section A.2]{Herzog'sBook}. The 
\emph{Castelnuovo-Mumford regularity} of $M$, denoted by $\reg(M)$, is defined as 
$\reg(M)=\max \{j-i \mid \beta_{i,j}(M) \neq 0\}$. 

The following properties of the polarization process will be used in the next 
sections.
%In this paper, we repeatedly use the following properties of the
%polarization, namely:
\begin{lemma}\label{pol_reg}  Let $I$ be a monomial ideal in 
$R.$ Then
\begin{enumerate}
 \item \cite[Corollary 1.6.3]{Herzog'sBook} for 
 all $\ell,j$, $\beta_{\ell,j}(R/I)=\beta_{\ell,j}(\widetilde{R} / \widetilde {I})$.
 \item \cite[Lemma 3.5]{Fakhari} $I$ has linear quotients if and only if $\widetilde{I}$ has linear
quotients.
\item \cite[Proposition 1]{NPY21} $I$ is a componentwise linear ideal if and only if 
$\widetilde{I}$ is a componentwise linear ideal.
\end{enumerate}
\end{lemma}

\section{Symbolic powers of cover ideals of vertex decomposable graphs}\label{new}
In this section, we give a combinatorial sufficient condition for when 
$J(G)^{(k)}$ is not a componentwise linear ideal for all $k \geq 2$.
First, we  show that all the symbolic powers of $J(G)$ are not componentwise linear ideal,
provided $J(G)^{(k)}$ is not componentwise linear for particular values of $k$.

\begin{lemma}\label{n-lemma}
 Let $G$ be a graph.
 \begin{enumerate}
  \item If 
$J(G)^{(2k)}$ is not componentwise linear for some $k \geq 1$, then 
$J(G)^{(2k')}$ is not componentwise linear for all $k' > k$.
\item If $J(G)^{(2k+1)}$ is not componentwise linear for some $k \geq 0$, then 
$J(G)^{(2k'+1)}$ is not componentwise linear for all $k' > k$.
 \end{enumerate}
\end{lemma}
\begin{proof}
 Let $G$ be a graph with $V(G)=\{x_1,\ldots,x_n\}$ and $G_p$ be a graph as in 
 Construction \ref{construction} for all $p \geq 1$.
 \vskip 1mm
 \noindent
 (1) 
 Suppose $J(G)^{(2k)}$ is not  componentwise linear  for some $k \geq 1$.
 Then  $\widetilde{J(G)^{(2k)}}$ is not  componentwise linear.
 By Lemma \ref{fak-result}, $J(G_{2k})$ is not  componentwise linear and 
 hence $G_{2k}$ is not sequentially Cohen-Macaulay. 
 It follows from  \cite[Lemma 3.3]{selva1}
 that 
 \[
  G_p\setminus N_{G_p}[x_{1,p},\ldots,x_{n,p}] \simeq G_{p-2}~\text{ for all } p \geq 2.
 \]
 Suppose $G_{2k+2}$ is a sequentially Cohen-Macaulay graph.
 Since $x_{1,2k+2},\ldots,x_{n,2k+2}$ is an independent set of $G_{2k+2}$, 
 by Theorem \ref{ind-sub},
 $G_{2k+2}\setminus N_{G_{2k+2}}[x_{1,2k+2},\ldots,x_{n,2k+2}]$ is 
 sequentially Cohen-Macaulay. Note that
 $G_{2k+2}\setminus N_{G_{2k+2}}[x_{1,2k+2},\ldots,x_{n,2k+2}] \simeq G_{2k}$.
 This is a contradiction to 
 our assumption.
 Therefore 
 $G_{2k+2}$ is not sequentially Cohen-Macaulay and hence
 $J(G)^{(2k+2)}$ is not componentwise linear.
  Now proceeding as in the above, one can show that 
 $G_{2k'}$  is not a sequentially Cohen-Macaulay graph
 for all $k' \geq k$. 
 Therefore, $J(G)^{(2k')}$ is not a componentwise linear ideal for all  $k' \geq k$. 
 \vskip 1mm
 \noindent
 (2) If $J(G)^{(2k+1)}$ is not componentwise linear for some $k \geq 0$,
 then proceeding as in the previous case of the proof, one
can show that $J(G)^{(2k'+1)}$ is not componentwise linear for all $k' > k$.
\end{proof}

\begin{theorem}\label{not-cl}
 Let $G$ be a graph.  
 If $J(G)^{(k)}$ and $J(G)^{(k+1)}$ are not componentwise linear ideals for some $k \geq 1$, then 
 $J(G)^{(k')}$ is not componentwise linear for all $k' \geq k$.
 %If $J(G)^{(k)}$ is not  a componentwise linear ideal either $k=1,2$ or
 %$k=2,3$, then 
%$J(G)^{(k)}$ is not a componentwise linear ideal for all $k \geq 2$.
\end{theorem}
\begin{proof}
Suppose $J(G)^{(k)}$ and $J(G)^{(k+1)}$ are not componentwise linear ideals.
 It follows from Lemma \ref{n-lemma} that $J(G)^{(k')}$ is not
 componentwise linear for all $k' \geq k$.
\end{proof}

\begin{remark}
As an immediate consequence of Theorem \ref{not-cl}, one can see that if 
$J(G)$ and $J(G)^{(2)}$ are not componentwise linear ideals, then 
$J(G)^{(k)}$ is not componentwise linear for all $k \geq 3$.
If $J(G)^{(2)}$ and $J(G)^{(3)}$ are not componentwise linear ideals, then 
$J(G)^{(k)}$ is not componentwise linear for all $k \geq 4$.
\end{remark}

Let $G$ be a vertex decomposable graph. Recall that the 
sequence $(x_{\alpha(1)},\ldots,x_{\alpha(l)})$ is a 
\textit{shedding ordered} of $G$ if $x_{\alpha(1)}$ is a shedding vertex 
of $G$ and $x_{\alpha(i)}$ is a shedding vertex of $G \setminus 
\{x_{\alpha(1)},\ldots,x_{\alpha(i-1)}\}$ for all $1<i \leq l$. Given 
a shedding order $\{x_{\alpha(1)},\ldots,x_{\alpha(l)}\}$, we 
call the set of isolated vertices in 
$G \setminus \{x_{\alpha(1)},\ldots,x_{\alpha(l)}\}=
\{x_{\gamma(1)},\ldots,x_{\gamma(r)}\}$ an \textit{$i$-order} for $G$.
Note that an $i$-order of $G$ is an independent set and 
$V(G)$ is the disjoint union of the sets 
$\{x_{\alpha(1)},\ldots,x_{\alpha(l)}\}$ and 
$\{x_{\gamma(1)},\ldots,x_{\gamma(r)}\}$.
In the following construction, 
for every vertex decomposable graph $G$, we associate a bipartite spanning subgraph $\B_G$
which is strongly related to the symbolic powers of 
componentwise linear cover ideals.

\begin{cons} \label{intr-cons}
Let $G$ be a vertex decomposable graph.
Let $(x_{\alpha(1)},\ldots,x_{\alpha(l)})$ be a 
shedding order of $G$ and 
$\{x_{\gamma(1)},\ldots,x_{\gamma(r)}\}$
be the $i$-order of $G$. Then we associate to $G$ a 
spanning bipartite  subgraph
$\B_G$ with partitions
$$ \{x_{\gamma(1)},\ldots,x_{\gamma(r)}\} \sqcup
\{x_{\alpha(1)},\dots,x_{\alpha(l)}\}$$
 and 
 \[
  E(\B_G)=\Big\{\{x_{\gamma(i)},x_{\alpha(j)}\} \in E(G) \mid 1 \leq i \leq r,~1 \leq j \leq l 
  \Big\}.
 \]

\iffalse

Define the sequence $$x_{\alpha(1)},\dots,x_{\alpha(l)}, 
x_{\gamma(1)},\ldots,x_{\gamma(r)}$$ 
such that $x_{\alpha(i)}$ is a shedding 
vertex of $G\setminus \{x_{\alpha(1)},\dots,x_{\alpha(i-1)}\}$, 
$G \setminus \{x_{\alpha(1)},\ldots,x_{\alpha(i)}\}$ is vertex decomposable
for all  $1\leq i\leq l$ and  $x_{\gamma(1)},\dots,x_{\gamma(r)}$ are disjoint vertices in 
$G\setminus \{x_{\alpha(1)},\dots,x_{\alpha(r)}\}$.
%Also set $N_{G\setminus N[x_1,\dots,x_{i-1}]}(x_i) = \{x_{i1},\dots, x_{it_i}\}$. 
%If $A=\emptyset$, then by Theorem \ref{russ-rs}, $G$ has a simplicial vertex of $G$, 
%say $x_1$.
%  Let $x_{11},\ldots,x_{1t_1} \in N_G(x_{1})$.
%  If $G \setminus N_G[x_1]$ 
%  is totally disconnected, say $x_{\gamma'(1)},\ldots,x_{\gamma'(r')}$ are isolated vertices, 
%  then we call $x_{11},\ldots,x_{1t_1}$ is a $W$-order 
%  of $G$ and $x_{1}, x_{\gamma'(1)},\ldots,x_{\gamma'(r')}$ is a 
%  $i$-order of $G$. 
%  Suppose $G \setminus N_G[x_1]$ is not totally disconnected. 
%  Since $x_{1}$ is an independent set of $G$,
%  there exists a simplicial vertex $x_2$ in $G \setminus N_G[x_1]$.
%  Let $x_{21},\ldots,x_{2t_2} \in N_{G \setminus N_G[x_1]} 
%  (x_2)$. If $G \setminus N_G[x_1,x_2]$
%  is totally disconnected, say $x_{\gamma''(1)},\ldots,x_{\gamma''(r'')}$ are isolated 
%  vertices, then we call 
%  $x_{11},\ldots,x_{1t_1},x_{21},\ldots,x_{2t_2}$
%  is $W$-order of $G$ and $x_1,x_2, x_{\gamma''(1)},\ldots,x_{\gamma''(r'')}$
%  is a $i$-order of $G$.
%  Suppose $G \setminus N_G[x_1,x_2]$ is not totally disconnected.
%  Since $\{x_1,x_2\}$ is an independent set of $G$, 
%  $G \setminus N_G[x_1,x_2]$ has a simplicial vertex, say $x_3$.
%  Continuing like this, we obtain 
%  $x_{11},\ldots,x_{1t_1},\ldots,x_{l1},\ldots,x_{lt_l}$
%  is $W$-order of $G$ such that $G \setminus N_G[x_1,\ldots,x_l]$
%  is totally disconnected, say $x_{\gamma(1)},\ldots,x_{\gamma(r)}$ isolated
%  vertices. We call $x_1,\ldots,x_l,x_{\gamma(1)},\ldots,x_{\gamma(r)}$
%  is a $i$-order of $G$.  
Note that
$V(G)=\{x_{\alpha(1)},\dots,x_{\alpha(l)}\}
\cup \{x_{\gamma(1)},\ldots,x_{\gamma(r)}\}$ 
and $\{x_{\gamma(1)},\ldots,x_{\gamma(r)}\}$ is an independent set of 
$G$.
We call the sequence of  vertices 
$x_{\alpha(1)},\dots,x_{\alpha(l)}$ a shedding order of $G$, and the sequence of 
independent vertices 
$x_{\gamma(1)},\ldots,x_{\gamma(r)}$ an $i$-order of $G$.

Each vertex decomposable graph $G$, we associate the spanning bipartite  subgraph
$\B_G$ with partitions
$$ \{x_{\gamma(1)},\ldots,x_{\gamma(r)}\} \coprod
\{x_{\alpha(1)},\dots,x_{\alpha(l)}\}$$
 and 
 \[
  E(\B_G)=\Big\{\{x_{\gamma(i)},x_{\alpha(j)}\} \mid 1 \leq i \leq r,~1 \leq j \leq l 
  \Big\}.
 \]
\fi
\end{cons}

The next example describes the above construction.
 %For the convenience of the readers, we give an example 
 %in below, describing the procedure that we followed  above.

\begin{example}\label{ex}
 Let $G$ be a graph as shown in Fig. 1.
 Clearly, $G$ is a vertex decomposable graph.
 Note that $x_1$ is a simplicial vertex of $G$ and $N_{G}(x_1)=\{x_{2}\}$.
 
 \begin{minipage}{\linewidth}
  %\centering
\begin{minipage}{0.45\linewidth}
 
 \begin{figure}[H]
\begin{tikzpicture}[scale=0.3]
%\clip(-1.37,-1.54) rectangle (47.98,25.12);
\draw (10,16)-- (10,12);
\draw (10,12)-- (14,12);
\draw (10,12)-- (10,8);
\draw (14,12)-- (10,8);
\draw (10,4)-- (10,8);
\draw (10,8)-- (14,8);
\draw (14,8)-- (14,4);
\draw (10,4)-- (14,4);
\draw (20,16)-- (20,12);
\draw (20,12)-- (24,12);
\draw (20,8)-- (24,12);
\draw (20,8)-- (24,8);
\draw (20,4)-- (24,4);
\draw (20,8)-- (20,4);
\draw (24,8)-- (24,4);
\draw (7.66,3.04) node[anchor=north west] {Fig. 1. $G$};
\draw (19.57,3.17) node[anchor=north west] {Fig. 2. $\mathcal{B}_G$};
\begin{scriptsize}
\fill [color=black] (10,16) circle (3.5pt);
\draw[color=black] (10.33,16.55) node {$x_1$};
\fill [color=black] (10,12) circle (3.5pt);
\draw[color=black] (10.63,12.56) node {$x_2$};
\fill [color=black] (14,12) circle (3.5pt);
\draw[color=black] (14.32,12.56) node {$x_3$};
\fill [color=black] (10,8) circle (3.5pt);
\draw[color=black] (9.35,8.15) node {$x_4$};
\fill [color=black] (10,4) circle (3.5pt);
\draw[color=black] (9.43,4.16) node {$x_7$};
\fill [color=black] (14,8) circle (3.5pt);
\draw[color=black] (14.28,8.58) node {$x_5$};
\fill [color=black] (14,4) circle (3.5pt);
\draw[color=black] (14.62,4.15) node {$x_6$};
\fill [color=black] (20,16) circle (3.5pt);
\draw[color=black] (20.32,16.55) node {$x_1$};
\fill [color=black] (20,12) circle (3.5pt);
\draw[color=black] (20.6,12.56) node {$x_2$};
\fill [color=black] (24,12) circle (3.5pt);
\draw[color=black] (24.23,12.56) node {$x_3$};
\fill [color=black] (20,8) circle (3.5pt);
\draw[color=black] (19.38,8.19) node {$x_4$};
\fill [color=black] (24,8) circle (3.5pt);
\draw[color=black] (24.31,8.58) node {$x_5$};
\fill [color=black] (20,4) circle (3.5pt);
\draw[color=black] (19.38,4.2) node {$x_7$};
\fill [color=black] (24,4) circle (3.5pt);
\draw[color=black] (24.65,4.15) node {$x_6$};
\end{scriptsize}
\end{tikzpicture}
 \end{figure}
\end{minipage}
\begin{minipage}{0.5\linewidth}
 By \cite[Corollary 7]{Wood2} and \cite[Corollary 3.5]{selva1}, $x_2$ is a shedding vertex and $G \setminus x_2$
 is a vertex decomposable graph. Similarly, one can see that 
 $x_4$ is a shedding vertex of $G \setminus \{x_2\}$ and 
 $x_6$ is a shedding vertex of $G \setminus \{x_2,x_4\}$. Note that
 $G \setminus \{x_2,x_4\}$ is vertex decomposable and 
 $G \setminus \{x_2,x_4,x_6\}$ is totally disconnected. Therefore
 $(x_2,x_4,x_6)$ is a shedding order and $\{x_1,x_3,x_5,x_7\}$ is an $i$-order of 
 $G$.
\end{minipage}
\end{minipage} 
\end{example}

Now we recall the definition of a graph isomorphism from \cite[Definition 1.1.20]{west}.
 An isomorphism from a graph $G$ to a graph $H$ is a bijection 
 $\phi: V(G) \longrightarrow V(H)$ such that $\{u,v\} \in E(G)$ if and only if
 $\{\phi(u),\phi(v)\} \in E(H)$. We say $G$ is \textit{isomorphic} to $H$,
 written $G \simeq H$, if there is an isomorphism from $G$ to $H$.
Recall that the sequential Cohen-Macaulayness of a bipartite graph $\B_G$ is equivalent to the
graph $\B_G$  being vertex decomposable (Theorem \ref{adam-rs}).
Below we prove that if $\B_G$ is  not sequentially Cohen-Macaulay, then 
neither is $G_k$ for all $k \geq 2$. This result is crucial in obtaining our main result.

\begin{lemma}\label{suff-cond}
 Let $G$ be a vertex decomposable graph and $G_k$ be a graph as in Construction \ref{construction} for 
 all $k \geq 2$.
 If $\B_G$ is not vertex decomposable, then 
 $G_k$ is not a sequentially Cohen-Macaulay graph for all $k \geq 2$.
\end{lemma}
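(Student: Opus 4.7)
The plan is to exhibit, for every $k \geq 2$, an independent set $A_k \subseteq V(G_k)$ such that $G_k \setminus N_{G_k}[A_k] \simeq \B_G$. Given such an $A_k$, the lemma follows at once: if $G_k$ were sequentially Cohen--Macaulay, Theorem \ref{ind-sub} would force $\B_G \simeq G_k \setminus N_{G_k}[A_k]$ to be sequentially Cohen--Macaulay as well, hence vertex decomposable by Theorem \ref{adam-rs} (since $\B_G$ is bipartite), contradicting the hypothesis. The candidate I would take is
\[
A_k \;=\; \{x_{\gamma(j),p} \;:\; 1 \leq j \leq r,\; 2 \leq p \leq k\}.
\]

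The principal technical step, and the one I expect to be the main obstacle, is a preliminary claim: every shedding vertex $x_{\alpha(i)}$ is adjacent in $G$ to some $i$-order vertex. I would prove this by downward induction on $i$. Assume the claim holds for all $j > i$ (vacuously when $i=l$) and, for contradiction, that $x_{\alpha(i)}$ has no $i$-order neighbour. Writing $G_i := G \setminus \{x_{\alpha(1)}, \ldots, x_{\alpha(i-1)}\}$, this assumption forces $N_{G_i}(x_{\alpha(i)}) \subseteq \{x_{\alpha(j)} : j > i\}$, so $\{x_{\gamma(1)}, \ldots, x_{\gamma(r)}\}$ is disjoint from $N_{G_i}[x_{\alpha(i)}]$ and therefore lies inside $G_i \setminus N_{G_i}[x_{\alpha(i)}]$. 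On the other hand, the inductive hypothesis implies that every $x_{\alpha(j)}$ with $j > i$ is adjacent to some vertex of $\{x_{\gamma(1)}, \ldots, x_{\gamma(r)}\}$, so this set cannot be enlarged inside $G_i \setminus x_{\alpha(i)}$ and is a maximal independent set there. This contradicts $x_{\alpha(i)}$ being a shedding vertex of $G_i$, proving the claim.

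With the claim in hand, the rest of the proof is bookkeeping. Independence of $A_k$ in $G_k$ is immediate since an edge between $x_{\gamma(j),p}$ and $x_{\gamma(j'),p'}$ would require $\{x_{\gamma(j)}, x_{\gamma(j')}\} \in E(G)$, which fails by the independence of the $i$-order. By definition of $G_k$, the neighbours of $x_{\gamma(j),p}$ are exactly $\{x_{\alpha(i'),q} : x_{\alpha(i')} \in N_G(x_{\gamma(j)}),\; q \leq k+1-p\}$ (using that $N_G(x_{\gamma(j)}) \subseteq \{x_{\alpha(1)}, \ldots, x_{\alpha(l)}\}$). Taking the union over $2 \leq p \leq k$ and $1 \leq j \leq r$ the $q$-bound becomes $q \leq k-1$, and by the preliminary claim the shedding-vertex constraint becomes $x_{\alpha(i')} \in \{x_{\alpha(1)}, \ldots, x_{\alpha(l)}\}$. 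Hence
\[
N_{G_k}[A_k] \;=\; A_k \,\cup\, \{x_{\alpha(i'),q} \,:\, 1 \leq i' \leq l,\; 1 \leq q \leq k-1\},
\]
and consequently $G_k \setminus N_{G_k}[A_k]$ has vertex set $\{x_{\gamma(j),1}\}_{j=1}^{r} \cup \{x_{\alpha(i'),k}\}_{i'=1}^{l}$.

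It remains to compare edges. Two $x_{\gamma(j),1}, x_{\gamma(j'),1}$ are adjacent only via an edge of $G$ between $i$-order vertices (of which there are none); two $x_{\alpha(i'),k}, x_{\alpha(i''),k}$ would require $2k \leq k+1$, which fails for $k \geq 2$; and $\{x_{\gamma(j),1}, x_{\alpha(i'),k}\}$ is an edge of $G_k$ precisely when $\{x_{\gamma(j)}, x_{\alpha(i')}\} \in E(G)$, since $1 + k \leq k+1$. This is exactly the edge set of $\B_G$, so the bijection $x_{\gamma(j),1} \mapsto x_{\gamma(j)}$, $x_{\alpha(i'),k} \mapsto x_{\alpha(i')}$ supplies the desired isomorphism $G_k \setminus N_{G_k}[A_k] \simeq \B_G$, and the lemma follows.
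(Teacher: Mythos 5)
Your proof is correct, and it differs from the paper's argument in how it handles general $k$. The paper does not construct a deleting set for every $k$: it invokes the parity-propagation argument from the proof of Theorem \ref{not-cl} (via $G_k\setminus N_{G_k}[x_{1,k},\ldots,x_{n,k}]\simeq G_{k-2}$) to reduce to $k=2,3$, and then deletes exactly your $A_2$ and $A_3$ (called $A$ and $B$ there) to exhibit $\B_G$ as $G_2\setminus N_{G_2}[A]$ and $G_3\setminus N_{G_3}[B]$. Your choice $A_k=\{x_{\gamma(j),p}: 2\le p\le k\}$ carries out the same deletion trick uniformly, which makes the lemma self-contained rather than dependent on the earlier reduction; the paper's route is shorter only because it reuses machinery already set up. A genuine added value of your write-up is the preliminary claim that every shedding-order vertex is adjacent in $G$ to some $i$-order vertex, proved by downward induction using the shedding condition: the paper's "one can see that \ldots thus $G_2\setminus N_{G_2}[A]=\B_G$" silently needs exactly this fact (otherwise some vertices $x_{\alpha(i),1}$ would survive the deletion), so your argument fills a real gap in the exposition. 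Two small points: your local notation $G_i$ for $G\setminus\{x_{\alpha(1)},\ldots,x_{\alpha(i-1)}\}$ clashes with the paper's $G_k$ from Construction \ref{construction}, so rename it to avoid confusion; and note that the claim's maximality argument uses $r\ge 1$, which is harmless here since an edgeless $\B_G$ would be vertex decomposable, contradicting the hypothesis.
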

\begin{proof}
Let 
$(x_{\alpha(1)},\dots,x_{\alpha(l)})$ be a shedding order of $G$ and  
$\{x_{\gamma(1)},\ldots,x_{\gamma(r)}\}$ be 
an $i$-order of $G$. 
%Since $\{x_{\gamma(1)},\ldots,x_{\gamma(r)}\}$ is an independent set of% $G$, we have $N_G(\{x_{\gamma(1)},\ldots,x_{\gamma(r)}\})$ $=$ 
% $\{x_{\alpha(1)},\dots,x_{\alpha(l)}\}$.
 By the proof of Theorem \ref{not-cl}, it is enough to prove that $G_2$ and 
 $G_3$ are not sequentially Cohen-Macaulay
 graphs.
 Suppose  
 $G_2$ and $G_3$ are sequentially Cohen-Macaulay
 graphs.
 Note that $A=\{x_{\gamma(1),2},\ldots,x_{\gamma(r),2}\}$ 
and  
 $B=\{x_{\gamma(1),2},\ldots,x_{\gamma(r),2}$, 
 $x_{\gamma(1),3},\ldots,x_{\gamma(r),3}\}$ are
 independent sets of $G_2$ and $G_3$ respectively. 
 Therefore,  by 
 Theorem \ref{ind-sub}, $G_2 \setminus N_{G_2}[A]$
 and $G_3 \setminus N_{G_3}[B]$ are sequentially Cohen-Macaulay graphs. 
 Since $N_G(\{x_{\gamma(1)}$, $\ldots$, $x_{\gamma(r)}\})$ $=$ 
 $\{x_{\alpha(1)}$, $\dots$, $x_{\alpha(l)}\}$, we have 
 \begin{align*}
 N_{G_2}(A) &= 
 \{x_{\alpha(1),1}, \dots, x_{\alpha(l),1}\} \text{ and }\\
 N_{G_3}(B) &= 
 \{x_{\alpha(1),1}, \dots, x_{\alpha(l),1}, x_{\alpha(1),2}, \dots, 
 x_{\alpha(l),2}\}.
 \end{align*}

 It follows from Construction \ref{construction} that
$x_{\gamma(1),1},
 \ldots,x_{\gamma(r),1} 
 \notin N_{G_2}[A]$
  and $x_{\gamma(1),1},
 \ldots,x_{\gamma(r),1} 
 \notin N_{G_2}[B]$.
 Then 
 \begin{align*}
  V(G_2 \setminus N_{G_2}[A])&=
 \{x_{\gamma(1),1}, \ldots, x_{\gamma(r),1}, x_{\alpha(1),2}, \dots, 
 x_{\alpha(l),2} \}\\
 V(G_3 \setminus N_{G_3}[B])&=
 \{x_{\gamma(1),1}, \ldots, x_{\gamma(r),1}, x_{\alpha(1),3}, \dots, 
 x_{\alpha(l),3} \}.
 \end{align*}
 Clearly, $|V(G_2 \setminus N_{G_2}[A])|=|\B_G|$ and 
$|V(G_3 \setminus N_{G_3}[B])|=|\B_G|$. Since 
$\{x_{\gamma(1)},\ldots,x_{\gamma(r)}\}$ is an independent set of $G$,
$\{x_{\gamma(i),1}, x_{\gamma(j),1}\} \notin E(G_2 \setminus N_{G_2}[A])$,
 and similarly for $E(G_3\setminus N_{G_3}[B])$,
 %$\{x_{\gamma(i),1}, x_{\gamma(j),1}\} \notin E(G_3 \setminus N_{G_3}[B])$
 for all $1 \leq i,j \leq l$. With this, and Construction \ref{construction}, 
 it is clear
 \begin{align*}
  E(G_2 \setminus N_{G_2}[A])&= 
\{\{x_{\gamma(i),1}, x_{\alpha(j),2}\}\mid \{x_{\gamma(i)},x_{\alpha(j)}\}\in E(G_2)\} 
 \text{ and }\\
 E(G_3 \setminus N_{G_3}[B])&=\{\{x_{\gamma(i),1},
 x_{\alpha(j),3}\}\mid \{x_{\gamma(i)},x_{\alpha(j)}\}\in E(G_3)\}.
 \end{align*}
 %Let $\Phi:V(G_2 \setminus N_{G_2}[A]) \to V(\B_G)$ and $\Psi:V(G_3 \setminus 
 %N_{G_3}[B]) \to V(\B_G)$ be the maps defined by 
 %\begin{align*}
  %\Phi(x_{\gamma(i),1})&=x_{\gamma(i)},~\Phi(x_{\alpha(j),2})=x_{\alpha(j)} 
  %\text{ for all } 1\leq i \leq r,~1\leq j \leq l \text{ and }\\
  %\Psi(x_{\gamma(i),1})&=x_{\gamma(i)},~\Psi(x_{\alpha(j),3})=x_{\alpha(j)} 
  %\text{ for all } 1\leq i \leq r,~1\leq j \leq l.
 %\end{align*}
Then $G_2 \setminus N_{G_2}[A]$ is isomorphic to $\B_G$ and 
 $G_3 \setminus N_{G_3}[B]$ is isomorphic to $\B_G$.
 Since $\B_G$ is not sequentially Cohen-Macaulay, 
 $G_2 \setminus N_{G_2}[A]$ and $G_3 \setminus N_{G_3}[B]$ are not 
 sequentially Cohen-Macaulay graphs. This is a contradiction
 to our assumption. Hence $G_2$ and $G_3$ are not sequentially Cohen-Macaulay graphs.
\end{proof}

Now we are ready to prove the main result of this section.

\begin{theorem}\label{suff-cond1}
 Let $G$ be a vertex decomposable graph. If there exists an independent set $A$ of $G$ such that
 $\B_{G \setminus N_G[A]}$ is not vertex decomposable, then 
$J(G)^{(k)}$ is not a componentwise linear ideal for all $k \geq 2$.
 \end{theorem}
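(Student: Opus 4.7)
The plan is to reduce the statement to Lemma \ref{suff-cond} applied to the induced subgraph $H := G \setminus N_G[A]$, by exhibiting $H_k$ as an independent-set-deletion inside $G_k$ for every $k \geq 2$.

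First I would set $H := G \setminus N_G[A]$ and observe that $H$ is vertex decomposable by Theorem \ref{ind-sub}, since $G$ is and $A$ is independent. Because the hypothesis says $\B_H = \B_{G \setminus N_G[A]}$ is not vertex decomposable, Lemma \ref{suff-cond} applied to $H$ then gives that $H_k$ is not sequentially Cohen-Macaulay for every $k \geq 2$.

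Next I would realise $H_k$ as $G_k \setminus N_{G_k}[B]$ for a suitable independent set $B$ of $G_k$. The natural candidate is
$$B := \{x_{i,p} \mid x_i \in A,\ 1 \leq p \leq k\} \subset V(G_k).$$
Since $A$ is independent in $G$ and edges of $G_k$ only occur between $x_{a,s}, x_{b,t}$ with $\{x_a,x_b\} \in E(G)$, the set $B$ is independent in $G_k$. A direct computation of closed neighborhoods from Construction \ref{construction} gives
$$N_{G_k}[B] = \{x_{j,q} \mid x_j \in N_G[A],\ 1 \leq q \leq k\};$$
the key point is that already from $x_{i,1} \in B$ (any $x_i \in A$) every $x_{j,q}$ with $x_j \in N_G(x_i)$ and $1 \leq q \leq k$ is reachable via the inequality $1+q \leq k+1$, so all neighbors of $A$ at every level get removed. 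Comparing vertices and edges against the definition of $(G \setminus N_G[A])_k$ then yields, as graphs,
$$G_k \setminus N_{G_k}[B] \;=\; (G \setminus N_G[A])_k \;=\; H_k.$$

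Finally I would conclude by contradiction: suppose $J(G)^{(k)}$ is componentwise linear for some $k \geq 2$. By Lemma \ref{fak-result} together with Corollary \ref{pol_reg}, $J(G_k) = \widetilde{J(G)^{(k)}}$ is componentwise linear, and then by the Herzog--Reiner--Welker characterization recalled in the introduction, $G_k$ is sequentially Cohen-Macaulay. Theorem \ref{ind-sub} applied to the independent set $B$ then forces $G_k \setminus N_{G_k}[B] = H_k$ to be sequentially Cohen-Macaulay, contradicting Step~1. Hence $J(G)^{(k)}$ is not componentwise linear for any $k \geq 2$. The main delicate point, and essentially the only nontrivial one, is verifying the identity $G_k \setminus N_{G_k}[B] = H_k$ as graphs (not merely up to isomorphism); once this and the independence of $B$ are nailed down, everything else is a direct application of Lemma \ref{suff-cond}, Theorem \ref{ind-sub}, and the polarization/componentwise-linearity dictionary already assembled in Section \ref{pre}.
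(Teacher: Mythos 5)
Your proposal is correct and follows essentially the same route as the paper: both reduce to Lemma \ref{suff-cond} applied to $H=G\setminus N_G[A]$ (vertex decomposable by Theorem \ref{ind-sub}), realize $H_k$ as the deletion of a closed neighborhood of an independent set inside $G_k$, and conclude via Theorem \ref{ind-sub}, Lemma \ref{fak-result} and polarization. The only cosmetic difference is that the paper deletes just the level-one copies $\{z_{1,1},\dots,z_{s,1}\}$ and quotes \cite[Lemma 3.3(3)]{selva1}, obtaining $H_k$ together with isolated vertices, whereas you take all $k$ copies of $A$ and verify the identity $G_k\setminus N_{G_k}[B]=H_k$ directly, which avoids the isolated-vertex caveat.
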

\begin{proof}
 If $A=\emptyset$, then by Lemma \ref{suff-cond} and Lemma \ref{fak-result}, $J(G)^{(k)}$ is not
 componentwise linear for all $k \geq 2$. Assume that $A \neq \emptyset$.
 Let $A=\{z_1,\ldots,z_s\}$ be an independent set of $G$. Assume that 
 $J(G)^{(k)}$ is componentwise linear for some $k \geq 2$. By Lemma \ref{fak-result},
 $G_k$ is a sequentially Cohen-Macaulay graph. It follows from 
 \cite[Lemma 3.3(3)]{selva1} that 
 \[
  G_k \setminus N_{G_k}[z_{1,1},\ldots,z_{s,1}]=(G \setminus N_G[z_1,\ldots,z_s])_k
  \cup \{\text{ isolated vertices }\}.
 \]
Since $A$ is an independent set of $G$, $\{z_{1,1},\ldots,z_{s,1}\}$ is an independent
set of $G_k$. By Lemma \ref{ind-sub}, $(G \setminus N_G[z_1,\ldots,z_s])_k$ is 
sequentially Cohen-Macaulay. Therefore, by Lemma \ref{suff-cond},
$\B_{G \setminus N_G[A]}$ is a vertex decomposable graph. This is a contradiction to 
our assumption. Hence $J(G)^{(k)}$ is not a componentwise linear ideal for all $k \geq 2$.
\end{proof}

The following example shows that, using Theorem \ref{suff-cond1}, we can easily construct a graph 
$G$ such that $J(G)$ is componentwise linear but $J(G)^{(k)}$ is not 
componentwise linear for all $k \geq 2$. A graph which is isomorphic to the graph with vertices $a$, $b$, $c$, $d$ and edges
$\{a,b\}$, $\{b,c\}$, $\{a,c\}$, $\{a,d\}$, $\{c,d\}$ is called a \textit{diamond}.

\begin{example}
Let $G$ be a vertex decomposable graph
and $G \setminus N_G[A]$ be isomorphic to the diamond graph 
 for 
some independent set $A$ of $G$. 
Then $J(G)$ is a componentwise linear ideal.

\begin{minipage}{\linewidth}
  %\centering
\begin{minipage}{0.36\linewidth}

\begin{figure}[H]
\begin{tikzpicture}[scale=0.5]
\draw (3,6)-- (1,4);
\draw (5,4)-- (3,6);
\draw (3,2)-- (1,4);
\draw (5,4)-- (3,2);
\draw (1,4)-- (5,4);
\draw (9,6)-- (7,4);
\draw (9,6)-- (11,4);
\draw (9,2)-- (7,4);
\draw (11,4)-- (9,2);
\draw (1.08,1.63) node[anchor=north west] {$ G\setminus N_G[A] $};
\draw (7.56,1.63) node[anchor=north west] {$ \mathcal{B}_{G\setminus N_G[A]} $};
\begin{scriptsize}
\fill [color=black] (3,6) circle (3.5pt);
\draw[color=black] (3.14,6.5) node {$b$};
\fill [color=black] (1,4) circle (3.5pt);
\draw[color=black] (0.5,4.04) node {$a$};
\fill [color=black] (5,4) circle (3.5pt);
\draw[color=black] (5.34,4.2) node {$c$};
\fill [color=black] (3,2) circle (3.5pt);
\draw[color=black] (2.98,2.55) node {$d$};
\fill [color=black] (9,6) circle (3.5pt);
\draw[color=black] (9.14,6.5) node {$b$};
\fill [color=black] (7,4) circle (3.5pt);
\draw[color=black] (6.52,4.03) node {$a$};
\fill [color=black] (11,4) circle (3.5pt);
\draw[color=black] (11.34,4.2) node {$c$};
\fill [color=black] (9,2) circle (3.5pt);
\draw[color=black] (9.02,2.55) node {$d$};
\end{scriptsize}
\end{tikzpicture}
\end{figure}
\end{minipage}
\begin{minipage}{0.59\linewidth}
Suppose $G\setminus N_G[A]$ is the graph 
with vertices $a,b,c,d$ and edges
$\{a,b\}$, $\{b,c\}$, $\{a,c\}$, $\{a,d\}$, $\{c,d\}$.
One can see that $\B_{G \setminus N_G[A]}$ is the graph
with vertices $a,b,c,d$ and edges $\{a,b\}$, $\{b,c\}$, $\{a,d\}$,
$\{c,d\}$. By Theorem \ref{adam-rs}, Theorem \ref{adam-vila-rs}, 
$\B_{G \setminus N_G[A]}$ is not vertex decomposable.
Therefore, by Theorem \ref{suff-cond1}, $J(G)^{(k)}$ is not componentwise linear
for all $k \geq 2$.
\end{minipage}
\end{minipage}
\end{example}

  It is known that if 
  $G \setminus S$ is not a sequentially Cohen-Macaulay graph, where $S \subseteq V(G)$, then 
  $G \cup W(S)$ is not sequentially Cohen-Macaulay (\cite[Theorem 4.1]{FH}).
  Inspired by this result, we show that if the spanning bipartite subgraph of 
  $G \setminus S$ is not 
  sequentially Cohen-Macaulay, then $J(G \cup W(S))^{(k)}$ is not componentwise 
  linear for all $k \geq 2$. First we observe that if 
  $G \cup W(S)$ is a vertex decomposable graph, then $G \setminus S$ is a vertex decomposable 
  graph.
  
 \begin{corollary}\label{whisker-cor}
  Let $G \cup W(S)$, $S \subseteq V(G)$, be a vertex decomposable graph. 
  If $\B_{(G\setminus S) \setminus N_{(G \setminus S)}[A]}$ is not vertex 
  decomposable for some 
  independent set $A$ of $G \setminus S$, then $J(G \cup W(S))^{(k)}$ is not componentwise linear for all 
  $k \geq 2$.
 \end{corollary}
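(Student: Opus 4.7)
The plan is to reduce the corollary to a direct application of Theorem \ref{suff-cond1} on the graph $G \cup W(S)$. The key idea is that the whisker vertices $\{z_x : x \in S\}$ can be bundled into an enlarged independent set that, when deleted along with its closed neighborhood, strips off exactly $S$ and returns the subgraph $G \setminus S$.

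First I would verify that $G \setminus S$ is vertex decomposable. The set $W := \{z_x : x \in S\}$ is an independent set of $G \cup W(S)$, and
\[
 N_{G \cup W(S)}[W] = W \cup S,
\]
so $(G \cup W(S)) \setminus N_{G \cup W(S)}[W] = G \setminus S$. Since $G \cup W(S)$ is vertex decomposable, Theorem \ref{ind-sub} gives that $G \setminus S$ is vertex decomposable, so $\B_{(G \setminus S) \setminus N_{G \setminus S}[A]}$ is a meaningful object, and in particular the hypothesis makes sense.

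Next, given the independent set $A$ of $G \setminus S$ supplied by the hypothesis, I would form the enlarged set
\[
 A' := A \cup \{z_x : x \in S\},
\]
which is independent in $G \cup W(S)$ because $A \cap S = \emptyset$ (so no $z_x$ is adjacent to $A$) and the whisker vertices are pairwise non-adjacent. I would then compute
\[
 N_{G \cup W(S)}[A'] = \bigl(A \cup N_{G \cup W(S)}(A)\bigr) \cup S \cup \{z_x : x \in S\},
\]
noting that $N_{G \cup W(S)}(A) = N_G(A)$ (whiskers are not attached to $A$). Removing this closed neighborhood from $G \cup W(S)$ kills all of $S$, all whisker vertices, and the closed neighborhood of $A$ in $G$, leaving exactly the induced subgraph on $V(G \setminus S) \setminus N_{G \setminus S}[A]$. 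Hence
\[
 (G \cup W(S)) \setminus N_{G \cup W(S)}[A'] \;=\; (G \setminus S) \setminus N_{G \setminus S}[A],
\]
and since the two graphs are literally equal, taking a common shedding order and $i$-order in Construction \ref{intr-cons} yields
\[
 \B_{(G \cup W(S)) \setminus N_{G \cup W(S)}[A']} \;=\; \B_{(G \setminus S) \setminus N_{G \setminus S}[A]},
\]
which is not vertex decomposable by hypothesis.

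With this in hand the conclusion is immediate: Theorem \ref{suff-cond1}, applied to the vertex decomposable graph $G \cup W(S)$ and the independent set $A'$, gives that $J(G \cup W(S))^{(k)}$ is not a componentwise linear ideal for all $k \geq 2$. The main potential obstacle is the mild subtlety in Construction \ref{intr-cons} that $\B_H$ depends on a chosen shedding order; this is handled by observing that both sides of the equation are the same graph, so any consistent choice of shedding order on that graph produces the same bipartite subgraph, and Theorem \ref{suff-cond1} only requires the existence of one $A'$ for which $\B$ fails to be vertex decomposable.
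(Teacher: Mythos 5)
Your proposal is correct and matches the paper's own argument: the paper likewise observes that $\B_{(G \setminus S) \setminus N_{G \setminus S}[A]} = \B_{(G \cup W(S)) \setminus N_{G \cup W(S)}[A \cup \{z_x \mid x \in S\}]}$ and then invokes Theorem \ref{suff-cond1}. Your write-up simply spells out the independence of the enlarged set and the neighborhood computation that the paper leaves implicit.
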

\begin{proof}
Let $\G$ be a graph with vertex set $V(G) \cup \{z_x \mid x \in S\}$ and 
edge set $$E(G) \cup \{ \{x,z_x\} \mid x \in S\}.$$
One can see that $$(G\setminus S) \setminus N_{(G \setminus S)}[A]=
G \cup W(S) \setminus N_{G \cup W(S)}[A \cup \{z_x \mid x \in S\}]$$
for any independent set $A$ of $G \setminus S$.
Then $\B_{(G \setminus S) \setminus N_{G \setminus S}[A]}=
 \B_{G \cup W(S) \setminus N_{G \cup W(S)}[A \cup \{z_x \mid x \in S\}]}$
 for any independent set $A$ of $G \setminus S$.
Therefore, by Theorem \ref{suff-cond1}, $J(G \cup W(S))^{(k)}$ is not componentwise linear for all 
  $k \geq 2$.
 \end{proof}

 The following example shows that the converse of Corollary \ref{whisker-cor} 
 is not  necessarily true.
 \begin{example}\label{not-com}
  Let $G$ be a graph with $V(G)=\{x_1,\ldots,x_6\}$ and 
  set $\G=G \cup W(x_6)$.

 \begin{figure}[H]
\begin{tikzpicture}[scale=0.25]\draw (12,16)-- (8,12);
\draw (16,12)-- (12,16);
\draw (12,8)-- (8,12);
\draw (16,12)-- (12,8);
\draw (12,16)-- (16,16);
\draw (16,20)-- (16,16);
\draw [line width=1.2pt,dash pattern=on 3pt off 3pt] (12,20)-- (16,20);
\draw (24,16)-- (28,12);
\draw (24,16)-- (20,12);
\draw (24,8)-- (20,12);
\draw (28,12)-- (24,8);
\draw (36,8)-- (32,12);
\draw (40,12)-- (36,8);
\draw (36,16)-- (40,16);
\draw (11.12,7.46) node[anchor=north west] {$ \mathcal{G} $};
\draw (22.11,7.59) node[anchor=north west] {$ 
\mathcal{B}_{\mathcal{G} \setminus N_{\mathcal{G}}[x_6]} $};
\draw (35.07,7.63) node[anchor=north west] {$ \mathcal{B}_{(G\setminus S)} $};
\draw (8,12)-- (16,12);
\begin{scriptsize}
\fill [color=black] (12,16) circle (3.5pt);
\draw[color=black] (12.35,16.55) node {$x_1$};
\fill [color=black] (8,12) circle (3.5pt);
\draw[color=black] (7.16,12.05) node {$x_3$};
\fill [color=black] (16,12) circle (3.5pt);
\draw[color=black] (16.54,12.56) node {$x_2$};
\fill [color=black] (12,8) circle (3.5pt);
\draw[color=black] (11.92,9.2) node {$x_4$};
\fill [color=black] (16,16) circle (3.5pt);
\draw[color=black] (16.84,16.35) node {$x_5$};
\fill [color=black] (16,20) circle (3.5pt);
\draw[color=black] (16.29,20.54) node {$x_6$};
\fill [color=black] (12,20) circle (3.5pt);
\draw[color=black] (12.35,20.54) node {$z_{x_6}$};
\fill [color=black] (24,16) circle (3.5pt);
\draw[color=black] (24.35,16.55) node {$x_1$};
\fill [color=black] (28,12) circle (3.5pt);
\draw[color=black] (28.51,12.56) node {$x_2$};
\fill [color=black] (20,12) circle (3.5pt);
\draw[color=black] (19.0,12.36) node {$x_3$};
\fill [color=black] (24,8) circle (3.5pt);
\draw[color=black] (24.01,9.38) node {$x_4$};
\fill [color=black] (32,12) circle (3.5pt);
\draw[color=black] (32.29,12.56) node {$x_3$};
\fill [color=black] (36,16) circle (3.5pt);
\draw[color=black] (36.4,16.55) node {$x_1$};
\fill [color=black] (40,12) circle (3.5pt);
\draw[color=black] (40.35,12.56) node {$x_2$};
\fill [color=black] (36,8) circle (3.5pt);
\draw[color=black] (35.97,9.38) node {$x_4$};
\fill [color=black] (40,16) circle (3.5pt);
\draw[color=black] (40.35,16.55) node {$x_5$};
\end{scriptsize}
\end{tikzpicture}
\end{figure}
Clearly, $\G$ is a vertex decomposable graph. Since 
$\B_{\G \setminus N_{\G}[x_6]}$ is not vertex decomposable, 
by Theorem \ref{suff-cond1},
$J(\G)^{(k)}$ is not componentwise linear for all $k \geq 2$. But 
$\B_{(G \setminus S) \setminus N_{G \setminus S}[A]}$ is a 
vertex decomposable graph for any independent set $A$ of $G \setminus S$.
\end{example}

\section{Symbolic powers of cover ideals of $W$-graphs}\label{symbolic}
In this section, 
we give a sufficient and necessary conditions so that 
$J(G)^{(k)}$ is componentwise linear for all $k \geq 2$ when 
$G$ is a $W$-graph.
The following theorem due to Woodroofe is used repeatedly throughout this paper:
\begin{theorem}\cite[Corollary 5.5]{Russ11}\label{russ-rs}
 If $G$ is a graph such that $G \setminus N_G[A]$
 has a simplicial vertex for any independent set $A$ of $G$, then $G$ is vertex decomposable.
\end{theorem}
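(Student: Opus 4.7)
The plan is to proceed by strong induction on $|V(G)|$; the base case $|V(G)| \leq 1$ is trivial since such a $G$ is totally disconnected and hence vertex decomposable. For the inductive step, the strategy is to exhibit a shedding vertex $x$ of $G$ such that both $G \setminus x$ and $G \setminus N_G[x]$ again satisfy the hypothesis of the theorem, and then to invoke the inductive hypothesis together with Definition \ref{def-vertdecom} to conclude that $G$ is vertex decomposable.

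First, applying the hypothesis with $A = \emptyset$ yields a simplicial vertex $s$ of $G$. If $s$ is isolated, then $G$ is the disjoint union of $\{s\}$ and $G \setminus s$, and the hypothesis passes to $G \setminus s$: for any independent set $A'$ of $G \setminus s$, the set $A' \cup \{s\}$ is independent in $G$, and $G \setminus N_G[A' \cup \{s\}] = (G \setminus s) \setminus N_{G \setminus s}[A']$ has a simplicial vertex by the hypothesis on $G$; induction then gives that $G \setminus s$, hence $G$, is vertex decomposable. Otherwise $s$ has some neighbor $x$, and by \cite[Corollary 7]{Wood2} (any neighbor of a simplicial vertex is a shedding vertex) $x$ is a shedding vertex of $G$. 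It remains to verify the hypothesis for $G \setminus N_G[x]$ and for $G \setminus x$.

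For $G \setminus N_G[x]$ this is straightforward: given any independent $A' \subseteq V(G) \setminus N_G[x]$, the set $A' \cup \{x\}$ is independent in $G$, and the identity $G \setminus N_G[A' \cup \{x\}] = (G \setminus N_G[x]) \setminus N_{G \setminus N_G[x]}[A']$ together with the hypothesis on $G$ supplies the required simplicial vertex. For $G \setminus x$, fix an independent set $A'$ of $G \setminus x$. If $x \in N_G[A']$ then $(G \setminus x) \setminus N_{G \setminus x}[A'] = G \setminus N_G[A']$, and the hypothesis on $G$ again does the job.

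The main obstacle is the remaining case, $x \notin N_G[A']$, in which $(G \setminus x) \setminus N_{G \setminus x}[A'] = (G \setminus N_G[A']) \setminus \{x\}$; here the hypothesis applied to $A'$ only produces a simplicial vertex of $G \setminus N_G[A']$, which could a priori be $x$ itself. The key point I will establish is that the original simplicial vertex $s$ of $G$ always lies in $G \setminus N_G[A']$ in this case, which resolves the issue because $s \neq x$ and $N_G(s)$ being a clique imply that $s$ remains simplicial after removing $N_G[A'] \cup \{x\}$. Indeed, if $s \in A'$ then $x \in N_G(s) \subseteq N_G[A']$; and if $s$ were adjacent to some $y \in A'$, then $x$ and $y$ would both belong to the clique $N_G(s) \cup \{s\}$ and hence be adjacent, again forcing $x \in N_G[A']$; both contradict $x \notin N_G[A']$. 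This completes the verification of the hypothesis for $G \setminus x$, and induction finishes the proof.
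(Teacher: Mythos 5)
Your proof is correct. Note that the paper does not prove this statement at all: it is imported verbatim as \cite[Corollary 5.5]{Russ11}, so there is no internal argument to compare against; what you have done is reconstruct a self-contained proof, and it is essentially the natural induction underlying Woodroofe's result. The skeleton (take the simplicial vertex $s$ given by $A=\emptyset$, pass to a neighbor $x$, which is a shedding vertex by \cite[Corollary 7]{Wood2}, and verify the hypothesis is inherited by $G\setminus N_G[x]$ and $G\setminus x$) is sound, and you correctly identified and resolved the only delicate point: when $x\notin N_G[A']$, the hypothesis applied to $A'$ might only yield $x$ itself as the simplicial vertex of $G\setminus N_G[A']$, and your clique argument showing $s\notin N_G[A']$ (so $s$ survives and stays simplicial, since neighborhoods only shrink in induced subgraphs) closes that gap cleanly; the set identities you use, such as $G\setminus N_G[A'\cup\{x\}]=(G\setminus N_G[x])\setminus N_{G\setminus N_G[x]}[A']$, all check out. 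Two small remarks: in the case where $s$ is isolated, your step ``$G\setminus s$ vertex decomposable, hence $G$ is'' silently uses the standard fact that a disjoint union with an isolated vertex preserves vertex decomposability (equivalently, the independence complex becomes a cone); this is exactly \cite[Lemma 20]{Wood2} and should be cited, since an isolated vertex is never itself a shedding vertex. Also, it would be worth one sentence to observe that the degenerate situations (empty $G\setminus N_G[x]$, or $A'=\emptyset$ in the $G\setminus x$ analysis) are covered by your base case and by sub-case (b) respectively, as indeed they are.
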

We call a graph $G$  a \textit{$W$- graph} \footnote{The $W$-graph is named in 
 honor of \textit{Russ Woodroofe}.} if $G$ satisfies the hypothesis of 
Theorem \ref{russ-rs}.

The following observation shows that the hypothesis of Theorem \ref{russ-rs} 
includes the empty set.

\begin{obs}
%It may noted that the hypothesis of Theorem \ref{russ-rs} includes the empty set.
 %For example, 
 Let $G$ be a cycle with  vertices $x_1,\ldots,x_6$ and 
 $A$ be an independent set of $G$.
 If $A=\{x_i\}$ for some $1 \leq i \leq 6$, then $G \setminus N_G[A]$ is tree with 3 vertices. 
 Therefore $G \setminus N_G[A]$ has a  simplicial vertex.
 If $|A|>1$, then  $G \setminus N_G[A]$ is either an 
 empty graph or totally disconnected graph (with no edges).
 Therefore, $G \setminus N_G[A]$ has a  simplicial vertex
 for any non-empty independent set $A$ of $G$.
 But $G$ is not vertex decomposable \cite[Theorem 10]{Wood2}.
 Therefore, the hypothesis of Theorem \ref{russ-rs} includes the empty set.
Hence, if $G$ is a $W$-graph, then 
$G$ has a simplicial vertex. 
 \end{obs}

\begin{remark}\label{wgraphs}
 The family of $W$-graphs is rich, since it includes many interesting classes of graphs.
For example,
\begin{enumerate}
 \item if $G$ is a vertex decomposable bipartite graph, then by 
   Theorem \ref{adam-vila-rs}, 
  $G$ is a $W$-graph.
  \item In \cite{Dirac61}, Dirac proved that a graph $G$ 
   is chordal if and only if every induced subgraph
of $G$ has a simplicial vertex. Therefore, if $G$ is a chordal graph, then 
$G$ is a $W$-graph.
\item If $G$ is a simplicial graph (each of its vertices is either simplicial or is 
 adjacent to a simplicial vertex), then  $G$ is a $W$-graph.
 
 \item If $G=H \cup W(S)$, where $S \subseteq V(H)$, 
 is a graph such that $H \setminus S$ is chordal,  then 
  $G$ is a $W$- graph.
  
  \item If $G$ is a Cohen-Macaulay and very well-covered graph, 
 by \cite[Theorem 3.6]{crupi}, 
 $G$ has a  simplicial vertex and is therefore a $W$-graph.
 
 \item Let $G$ be a graph without 3-cycles and 5-cycles.
 If $G$ is a vertex decomposable graph, then by \cite[Lemma 45]{CCR16}, $G$ has a 
 simplicial vertex. Therefore, if $G$ is a vertex decomposable graph 
 without 3-cycles and 5-cycles, then  $G$ is a $W$-graph. 
\end{enumerate}
\end{remark}

We are now ready to prove  one of the  main results of this paper.

\begin{theorem}\label{main}
 Let $G$ be a $W$-graph. Then the following are
equivalent:
\begin{enumerate}
 \item $\B_{G\setminus N_G[A]}$ is a vertex decomposable graph
 for any independent set $A$ of $G$;
 \item $J(G)^{(k)}$ has linear quotients for all $k \geq 1$;
 \item $J(G)^{(k)}$ is a 
 componentwise linear ideal for all $k \geq 1$;
 \item $J(G)^{(k)}$ is a 
 componentwise linear ideal for some $k \geq 2$.
\end{enumerate}
\end{theorem}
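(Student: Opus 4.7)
The plan is to prove the chain $(2)\Rightarrow(3)\Rightarrow(1)\Rightarrow(2)$. The implication $(2)\Rightarrow(3)$ is immediate. For $(3)\Rightarrow(1)$ I argue by contrapositive: the $W$-graph hypothesis makes $G$ vertex decomposable via Theorem \ref{russ-rs}, so if some independent set $A\subseteq V(G)$ had $\B_{G\setminus N_G[A]}$ not vertex decomposable, then Theorem \ref{suff-cond1} would force $J(G)^{(k)}$ not componentwise linear for every $k\geq 2$, contradicting (3). For $(1)\Rightarrow(2)$ with $k=1$, the $W$-graph $G$ is vertex decomposable, hence sequentially Cohen-Macaulay, so $J(G)$ is componentwise linear. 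For $k\geq 2$, Lemma \ref{fak-result} identifies $\widetilde{J(G)^{(k)}}$ with $J(G_k)$, and Corollary \ref{pol_reg}(1) transfers componentwise linearity across polarization; so it suffices to show $G_k$ is sequentially Cohen-Macaulay, and I plan to establish the stronger statement that $G_k$ is vertex decomposable.

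I argue by induction on $|V(G)|$, with outer hypothesis that $H_{k'}$ is vertex decomposable for every $k'\geq 1$ whenever $H$ is a $W$-graph on strictly fewer vertices satisfying (1). The base case is trivial. In the inductive step, the $W$-graph hypothesis with $A=\emptyset$ supplies a simplicial vertex $x\in V(G)$; if $x$ is isolated $G$ splits as a disjoint union and one reduces to a smaller component, so assume $x$ has a neighbor $y$. A direct inspection of Construction \ref{construction} shows $x_{1,k}$ is simplicial in $G_k$: the constraint $k+p\leq k+1$ forces its neighbors to be $\{v_{j,1}:v_j\in N_G(x)\}$, and these form a clique because $N_G(x)$ is a clique in $G$ and $1+1\leq k+1$. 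I propose $y_{1,1}$ as the shedding vertex of $G_k$. The inclusion $N_G(x)\subseteq N_G[y]$ (since $N_G(x)$ is a clique containing $y$) forces $\{v_{j,1}:v_j\in N_G(x)\}\subseteq N_{G_k}[y_{1,1}]$, so any independent set $I\subseteq V(G_k)\setminus N_{G_k}[y_{1,1}]$ admits the strict extension $I\cup\{x_{1,k}\}$ inside $G_k\setminus y_{1,1}$, verifying the shedding condition. A vertex-by-vertex computation yields
\[
G_k\setminus N_{G_k}[y_{1,1}]\;=\;(G\setminus N_G[y])_k\;\sqcup\;\{y_{1,p}:2\leq p\leq k\},
\]
where the second piece is isolated. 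The graph $G\setminus N_G[y]$ is a $W$-graph on strictly fewer vertices that inherits (1): for any independent $A'$ in $G\setminus N_G[y]$, the set $A'\cup\{y\}$ is independent in $G$ and a short check gives $\B_{(G\setminus N_G[y])\setminus N_{G\setminus N_G[y]}[A']}=\B_{G\setminus N_G[A'\cup\{y\}]}$, which is vertex decomposable by (1). The outer induction hypothesis then makes the link vertex decomposable, and adjoining isolated vertices preserves this.

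The main obstacle is showing the deletion $G_k\setminus y_{1,1}$ is vertex decomposable, since it does not reduce to a single $H_k$ plus isolated vertices because the higher-level copies $y_{1,p}$ with $p\geq 2$ survive. My plan is an inner induction (on the number of remaining vertices) that successively sheds, in some fixed order, the level-$1$ copies $y'_{1,1}$ of the other neighbors $y'\in N_G(x)$. At each intermediate stage the still-simplicial $x_{1,k}$ supplies the shedding-extension argument verbatim, and because $N_G(x)$ is a clique in $G$ the previously removed level-$1$ vertices all lie in $N_{G_k}[y''_{1,1}]$ for the next choice $y''$; consequently the current link collapses once again to $(G\setminus N_G[y''])_k$ plus isolated vertices and is handled by the outer induction. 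Once every element of $\{v_{j,1}:v_j\in N_G(x)\}$ has been removed, $x_{1,k}$ is isolated and may be peeled off; the residual graph must then be matched against a construction governed by $G\setminus x$ together with the higher-level copies of $x$, or by selecting a further simplicial vertex of $G\setminus x$ supplied by the $W$-graph property, to which the outer induction on $|V(G)|$ applies. The technical heart is arranging this staircase so that condition (1) propagates through every intermediate graph and the post-peel residual retains enough structure for the outer induction; verifying these propagations, including that each intermediate link inherits the hypothesis of the outer induction, is where the most care is required.
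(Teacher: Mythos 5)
Your reductions $(2)\Rightarrow(3)$, $(3)\Rightarrow(1)$ via Theorem \ref{suff-cond1}, the passage through Lemma \ref{fak-result} and Corollary \ref{pol_reg}, the simpliciality of the top copy $x_{1,k}$, the shedding property of $y_{1,1}$, and the identification $G_k\setminus N_{G_k}[y_{1,1}]=(G\setminus N_G[y])_k\sqcup\{\text{isolated vertices}\}$ with hypothesis (1) passing to $G\setminus N_G[y]$ are all correct. But the proof of $(1)\Rightarrow(2)$ is not complete, and the missing piece is exactly the step you flag as ``the technical heart.'' After your staircase deletes the level-one copies of $N_G(x)$ and you peel off the isolated $x_{1,k}$, the residual graph $R=G_k\setminus\{v_{1,1},\dots,v_{d,1}\}$ is \emph{not} of the form $H_{k'}$ plus isolated vertices for any induced subgraph $H$ of $G$: it still contains the lower copies $x_{1,p}$, $p<k$, and all higher-level copies of the deleted neighbors, while level one is only partially depleted, so top-level vertices $u_{j,k}$ now have neighborhoods which are level-one copies of $N_G(u_j)\setminus N_G(x)$ and need not be cliques. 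Neither the outer induction on $|V(G)|$ nor ``a further simplicial vertex of $G\setminus x$'' applies to such a graph, and no mechanism in your plan lets hypothesis (1) enter at this point. That it \emph{must} enter is shown by the diamond graph $G$ (a chordal, hence $W$-graph, with $\B_G$ a $4$-cycle): taking $x=b$ simplicial with $N_G(x)=\{a,c\}$ and $k=2$, your staircase produces the residual whose non-isolated part is precisely the $4$-cycle $a_2b_1c_2d_1=\B_G$, which is not vertex decomposable. Since your argument up to the residual uses (1) only to pass to induced subgraphs $(G\setminus N_G[v_i])_k$, it would, if completable as described, prove that $G_k$ is vertex decomposable for every $W$-graph, which is false; so the gap is not a matter of bookkeeping but of a missing idea for the deletion branch.

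This is also where your route genuinely diverges from the paper. The paper never analyzes deletions or exhibits a shedding order of $G_k$: it proves the stronger, self-propagating statement that $G_k$ is itself a $W$-graph, by induction on $k+|V(G)|$, checking that $G_k\setminus N_{G_k}[B]$ has a simplicial vertex for every independent set $B$. Independent sets meeting level one reduce to $(G\setminus N_G[\cdot])_k$ (where (1) is inherited, as in your argument), and independent sets avoiding level one are handled by adjoining the whole top level, which collapses $G_k$ to $G_{k-2}$ and lets one track a simplicial vertex back into $G_k\setminus N_{G_k}[B]$; vertex decomposability then follows in one stroke from Woodroofe's criterion (Theorem \ref{russ-rs}). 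The point of that formulation is that the family of graphs $G_k\setminus N_{G_k}[B]$ is closed under the only operations the argument needs, so the problematic residual graphs of your staircase never arise. If you want to salvage your direct shedding-order approach, you would need either to enlarge your inductive family to include graphs like $R$ (i.e., $G_k$ with level one partially deleted) and show how condition (1) controls them, or to switch, as the paper does, to proving the $W$-property of $G_k$ and letting Theorem \ref{russ-rs} do the decomposition for you.
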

\begin{proof}
We note that the implication $(3) \Rightarrow (4)$
is trivial and the 
implications $(2) \Rightarrow (3)$ and $(4) \Rightarrow (1)$ follow from 
Theorem \ref{lq-rs} and 
Theorem \ref{suff-cond1}.
Therefore, we only have to prove that $(1)\Rightarrow (2)$.

Suppose $\B_{G\setminus N_G[A]}$ is  vertex decomposable 
 for any independent set $A$ of $G$. We claim that $G_k$ is a $W$-graph
 for all $k \geq 1$. We prove this by induction on $k+|V(G)|$. If 
 $k=1$ and $|V(G)| \geq 2$, then $G$ is a $W$-graph. If $k \geq 1$ and 
 $|V(G)|=2$, then by \cite[Theorem 3.6]{selva1}, $G_k$ is a vertex decomposable
 bipartite graph. Therefore,
 $G_k$ is a $W$-graph for all $k \geq 1$.  
 Now assume that,
 $k \geq 2$ and $|V(G)| \geq 3$. 
 Let $B$ be an independent set of 
 $G_k$. Suppose $B=\emptyset$. Since $G$ has a simplicial vertex, by 
 \cite[Lemma 3.1]{selva1}, $G_k$ has a simplicial vertex. 
 
 Assume that 
 $B \neq \emptyset$. Let 
 $V(G)=\{x_1,\ldots,x_n\}$ and  $$B=B_1 \coprod B_2 \coprod B_3,$$
 where
 \begin{align*}
  B_1 &\subseteq \{x_{i,1} \mid 1 \leq i \leq n\},\\
  B_2 &\subseteq \{x_{i,j} \mid 1 \leq i \leq n,~ 2 \leq j \leq k-1 \}, \text{ and }\\
  B_3 &\subseteq \{x_{i,k} \mid 1 \leq i \leq n\}.
 \end{align*}
 \textsc{Case I:} Suppose $B_1 \neq \emptyset$.
 \vskip 0.1mm \noindent
 It follows from
 \cite[Lemma 3.3(3)]{selva1} that
 \[
  G_k \setminus N_{G_k}[B_1] =(G \setminus N_G[x_{i_1},\ldots,x_{i_a}])_k
  \cup \{\text{ isolated vertices }\}
 \text{ where } B_1=\{x_{i_1,1},\ldots,x_{i_a,1}\}.\]
 Since $\{x_{i_1},\ldots,x_{i_a}\}$ is an 
independent set of $G$, $H=G \setminus N_G[x_{i_1},\ldots,x_{i_a}]$ is a 
$W$-graph. 
If $A'$ is any independent set of $H$, then $A' \cup \{x_{i_1},\ldots,x_{i_a}\}$ is an 
independent set of $G$.
Since $\B_{G \setminus N_G[A' \cup \{x_{i_1},\ldots,x_{i_a}\}]}$ is a
vertex decomposable graph and $$G\setminus N_G[A' \cup \{x_{i_1},\ldots,x_{i_a}\}]
=H\setminus N_H[A'],$$ 
we have $\B_{H \setminus N_H[A']}$ is a vertex decomposable graph.
Therefore, by induction on $k+|V(G)|$, $H_k$ is a $W$-graph for all 
$k \geq 1$, i.e., $H_k \setminus N_{H_k}[C]$ has a simplicial vertex 
for any independent set $C$ of $H_k$.
Note that $$G_k \setminus N_{G_k}[B]=H_k \setminus N_{H_k}[B_2 \coprod B_3] \cup 
\{\text{ isolated vertices }\}.$$
Since $B_2 \coprod B_3$ is an independent set of 
$H_k$, we have $G_k \setminus N_{G_k}[B]$ has a simplicial vertex.
\vskip 2mm
\noindent
\textsc{Case II:} Suppose $B_1= \emptyset$. 
\vskip 0.1mm
\noindent
Since $N_{G_k}(x_{1,k},\ldots,x_{n,k})=
\{x_{1,1},\ldots,x_{n,1}\}$, 
$B'=B_2 \coprod \{x_{1,k},\ldots,x_{n,k}\}$ is an independent set 
of $G_k$. It follows from \cite[Lemma 3.3(2)]{selva1} that
\begin{align*}
 G_k \setminus N_{G_k}[B']=L \setminus N_{L}[B_2], \text{ where $L=G_k \setminus \{x_{1,1},\ldots,x_{n,1},x_{1,k},\ldots,x_{n,k}\}$.}
\end{align*}
Note that $L \simeq G_{k-2}$. 

Since $B_2$ is an independent set of 
$L$, by induction on $k+|V(G)|$, $G_k \setminus N_{G_k}[B']$ has a simplicial vertex, say $z_{p,q}$.
Set $$\mathcal{L}=G_k \setminus N_{G_k}[B'] \text{ and } \mathcal{K} =
G_k \setminus N_{G_k}[B_2 \coprod B_3].$$

Now, we have to prove that $\mathcal{K}$ has a simplicial
vertex. 
First we claim that if 
$x_{\alpha,\beta}, x_{\gamma,\delta} \in N_{\mathcal{L}}(z_{p,q})$, then
$\beta=\delta$. Assume that $\beta\neq \delta$.
Without loss of generality, let $\beta < \delta$.
Since $x_{\alpha,\beta}, x_{\gamma,\delta} \in N_{\mathcal{L}}(z_{p,q})$,
by Construction \ref{construction}, 
$\{x_{\alpha},z_p\}, \{x_{\gamma},z_p\} \in E(G)$ and 
$\beta+q \leq k+1$, $\delta+q \leq k+1$.
Then $\{x_{\alpha,\delta}, z_{p,q}\} \in E(\mathcal{L})$.
This is a contradiction to the fact that $z_{p,q}$ is a simplicial vertex
since no edges exist between $x_{\alpha,\beta}$ and $x_{\alpha,\delta}$  
and thus this neighborhood is not a clique.
%If $x_{\alpha, \beta+1},\ldots,x_{\alpha,\delta} \in B_2$, then 
%$z_{p,q} \in N_{\mathcal{L}}[B_2]$. This is a contradiction to 
%$z_{p,q}$ is a simplicial vertex of $\mathcal{L}$. Therefore, 
%$x_{\alpha, \beta+1},\ldots,x_{\alpha,\delta} \notin B_2$.
%Since $z_{p,q}$ is a simplicial vertex, we have 
%$x_{\alpha, \beta+1},\ldots,x_{\alpha,\delta} \notin N_{\mathcal{L}}(z_{p,q})$.
%Thus $x_{\alpha, \beta+1},\ldots,x_{\alpha,\delta} \in N_L[B_2]$.  Therefore
%$x_{\alpha, \beta} \in N_L[B_2]$. This is a contradiction to $x_{\alpha,\beta} \in 
%N_{\mathcal{L}}(z_{p,q})$. 
Hence the claim. 

Set 
$$N_{\mathcal{L}}(z_{p,q})=\{x_{\alpha_1,\beta},\ldots,x_{\alpha_t,\beta}\}.$$
Now, we claim that 
\begin{align}\label{main:eq}
\{x_{\alpha_1,\beta},\ldots,x_{\alpha_t,\beta}\} \subseteq 
 N_{\mathcal{K}}(z_{p,q}) \subseteq \{x_{\alpha_1,\beta},\ldots,x_{\alpha_t,\beta}, 
 x_{\alpha_1,1},\ldots,x_{\alpha_t,1}\}. 
\end{align}
Since $\mathcal{L}$ is a subgraph of $\mathcal{K}$, we have
 $\{x_{\alpha_1,\beta},\ldots,x_{\alpha_t,\beta}\}\subseteq N_{\mathcal{K}}(z_{p,q})$.
 Suppose $x_{\lambda,1} \in N_{\mathcal{K}}(z_{p,q})$ for some 
$\lambda \notin \{\alpha_1,\ldots,\alpha_t\}$.
Since $i+q \leq k+1$ for all $1 \leq i \leq \lfloor \frac{k+1}{q} \rfloor$,
by Construction \ref{construction},
$\{x_{\lambda,i}, z_{p,q}\} \in E(G_k)$ for all 
$1 \leq i \leq \lfloor \frac{k+1}{q} \rfloor$.
Since $z_{p,q}$ is a simplicial vertex of 
$\mathcal{L}$, we have
\begin{align*}
 x_{\lambda,i} \notin N_{\mathcal{L}}(z_{p,q})
 \text{ for all } 
 2 \leq i \leq \lfloor \frac{k+1}{q} \rfloor &\Rightarrow
 x_{\lambda,i} \in N_{G_k}[B'] \text{ for all }  
 2 \leq i \leq \lfloor \frac{k+1}{q} \rfloor,\\
 &\Rightarrow x_{\lambda,i} \in N_{L}[B_2] \text{ for all }  
 2 \leq i \leq \lfloor \frac{k+1}{q} \rfloor,\\
 &\Rightarrow x_{\lambda,i} \in N_{G_k}[B_2] \text{ for all }  
 1 \leq i \leq \lfloor \frac{k+1}{q} \rfloor.
\end{align*}
 Therefore
 $x_{\lambda,1} \notin V(\mathcal{K})$.
 Hence $N_{\mathcal{K}}(z_{p,q}) \subseteq \{x_{\alpha_1,\beta},\ldots,x_{\alpha_t,\beta}, 
 x_{\alpha_1,1},\ldots,x_{\alpha_t,1}\}$.

 If $x_{\alpha_j,1} \notin N_{\mathcal{K}}(z_{p,q})$ for all $1 \leq j \leq t$, 
 then by \eqref{main:eq}, $N_{\mathcal{K}}(z_{p,q})=
 \{x_{\alpha_1,\beta},\ldots,x_{\alpha_t,\beta}\}$. 
  Note that $\{x_{\alpha_1,\beta},\ldots,x_{\alpha_t,\beta}\}$
 is a clique in $\mathcal{K}$, i.e., $\{x_{\alpha_1},\ldots,x_{\alpha_t}\}$
is a clique in $G$. 
 Therefore, $z_{p,q}$ is a 
 simplicial vertex of $\mathcal{K}$.  
 Suppose 
 $x_{\alpha_j,1} \in N_{\mathcal{K}}(z_{p,q})$ for some $1 \leq j \leq t$.
Since $z_{p,k}$ is only connected to vertices of the form 
$x_{\nu,1}$ for some $x_{\nu} \in V(G)$, we have by \eqref{main:eq}, $N_{\mathcal{K}}(z_{p,k}) 
 \subseteq \{x_{\alpha_1,1},\ldots,x_{\alpha_t,1}\} $. Then
 $z_{p,k}$
 is a simplicial vertex of $\mathcal{K}$. 
 Therefore $\mathcal{K}$ has a simplicial vertex.
 Hence, by Theorem \ref{russ-rs}, $G_k$ is a vertex decomposable graph for all 
 $k \geq 2$.
 
 By \cite[Proposition 8.2.5]{Herzog'sBook}, $J(G_k)$ has linear quotients for all 
$k \geq 1$.
Therefore, by  Lemma \ref{fak-result}, Lemma \ref{pol_reg}, 
$ J(G)^{(k)}$ has linear quotients.
%Hence $J(G)^{(k)}$ is componentwise linear for all $k \geq 1$.
%
%\vskip 1mm
%\noindent
%$(2)\Rightarrow (3)$ This implication is trivial.
%$(3)\Rightarrow (1)$ This implication follows from Theorem \ref{suff-cond1}.
 \end{proof}
\begin{remark}
It is interesting to note that 
if $\B_{G \setminus N_G[A]}$ is a vertex decomposable graph for any
non-empty independent set $A$ of $G$,
then $\B_G$ is not necessarily vertex decomposable.
For example, let $G$ 
be a graph as shown in 
Example \ref{ex}. It is not hard to verify that $\B_{G \setminus N_G[A]}$ 
is vertex decomposable for all non-empty independent sets $A$ of $G$ but 
$\B_G$ is not vertex decomposable.
If $\B_G$ is a vertex decomposable graph, then 
$\B_{G \setminus N_G[A]}$ is not necessarily vertex decomposable for any non-empty 
independent set $A$
of $G$.

 \begin{figure}[H]

\begin{tikzpicture}[scale=0.33]
%\clip(0.73,-1.5) rectangle (50.08,25.17);
\draw (6,19.97)-- (6,18);
\draw (6,18)-- (4,16);
\draw (8,16)-- (6,18);
\draw (4,12)-- (8,12);
\draw (4,16)-- (4,12);
\draw (8,16)-- (8,12);
\draw (4,16)-- (8,16);
\draw (8,8)-- (8,12);
\draw (12,12)-- (8,8);
\draw (4,12)-- (8,8);
\draw (8,12)-- (12,12);
\draw (4,16)-- (8,12);
\draw (8,16)-- (4,12);
\draw (16,12)-- (16,16);
\draw (16,16)-- (20,16);
\draw (16,16)-- (20,12);
\draw (16,16)-- (18,18);
\draw (18,20)-- (18,18);
\draw (20,8)-- (20,12);
\draw (16,12)-- (20,8);
\draw (24,12)-- (20,8);
\draw (28,12)-- (32,12);
\draw (32,8)-- (28,12);
\draw (32,12)-- (32,8);
\draw (36,12)-- (32,8);
\draw (36,12)-- (32,12);
\draw (40,12)-- (44,12);
\draw (44,8)-- (40,12);
\draw (48,12)-- (44,8);
\draw (48,12)-- (44,12);
\draw (4.08,7.5) node[anchor=north west] { Fig. 1. $G$};
\draw (16.04,7.55) node[anchor=north west] { Fig. 2. $\mathcal{B}_G$};
\draw (26.18,7.55) node[anchor=north west] { Fig. 3. $G\setminus N_G[x_2] $};
\draw (39.69,7.63) node[anchor=north west] {Fig. 4. $\mathcal{B}_{G\setminus N_G[x_2]} $};
\begin{scriptsize}
\fill [color=black] (6,19.97) circle (3.5pt);
\draw[color=black] (6.6,20.54) node {$x_1$};
\fill [color=black] (6,18) circle (3.5pt);
\draw[color=black] (6.7,18.56) node {$x_2$};
\fill [color=black] (4,16) circle (3.5pt);
\draw[color=black] (3.19,16.21) node {$x_3$};
\fill [color=black] (8,16) circle (3.5pt);
\draw[color=black] (8.62,16.55) node {$x_4$};
\fill [color=black] (4,12) circle (3.5pt);
\draw[color=black] (3.13,12.13) node {$x_5$};
\fill [color=black] (8,12) circle (3.5pt);
\draw[color=black] (8.62,12.56) node {$x_6$};
\fill [color=black] (8,8) circle (3.5pt);
\draw[color=black] (7.15,8.06) node {$x_7$};
\fill [color=black] (12,12) circle (3.5pt);
\draw[color=black] (12.61,12.56) node {$x_8$};
\fill [color=black] (16,12) circle (3.5pt);
\draw[color=black] (15.14,12.05) node {$x_5$};
\fill [color=black] (16,16) circle (3.5pt);
\draw[color=black] (15.14,16.29) node {$x_3$};
\fill [color=black] (20,16) circle (3.5pt);
\draw[color=black] (20.28,16.55) node {$x_4$};
\fill [color=black] (20,12) circle (3.5pt);
\draw[color=black] (20.28,12.56) node {$x_6$};
\fill [color=black] (18,18) circle (3.5pt);
\draw[color=black] (18.79,18.56) node {$x_2$};
\fill [color=black] (18,20) circle (3.5pt);
\draw[color=black] (18.35,20.54) node {$x_1$};
\fill [color=black] (20,8) circle (3.5pt);
\draw[color=black] (19.12,8.06) node {$x_7$};
\fill [color=black] (24,12) circle (3.5pt);
\draw[color=black] (24.35,12.56) node {$x_8$};
\fill [color=black] (28,12) circle (3.5pt);
\draw[color=black] (28.34,12.56) node {$x_5$};
\fill [color=black] (32,12) circle (3.5pt);
\draw[color=black] (32.33,12.56) node {$x_6$};
\fill [color=black] (32,8) circle (3.5pt);
\draw[color=black] (31.0,8.15) node {$x_7$};
\fill [color=black] (36,12) circle (3.5pt);
\draw[color=black] (36.32,12.56) node {$x_8$};
\fill [color=black] (40,12) circle (3.5pt);
\draw[color=black] (40.35,12.56) node {$x_5$};
\fill [color=black] (44,12) circle (3.5pt);
\draw[color=black] (44.33,12.56) node {$x_6$};
\fill [color=black] (44,8) circle (3.5pt);
\draw[color=black] (43.05,8.1) node {$x_7$};
\fill [color=black] (48,12) circle (3.5pt);
\draw[color=black] (48.36,12.56) node {$x_8$};
\end{scriptsize}
\end{tikzpicture}
\end{figure}

 For example, let $G$ be a graph as shown in Fig. 1.
Since $G$ is a chordal graph, $G$ is a $W$-graph. 
One can see that 
$\mathcal{B}_G$ is a vertex decomposable bipartite graph but $\B_{G \setminus N_G[x_2]}$ 
is not vertex decomposable.  
\end{remark}

The following result partially answers a question asked in \cite[Question 4.9]{GTS20}. 
\begin{corollary}\label{main-cor}
 Let $\G=G \cup W(S)$ be a $W$-graph and $S \subseteq V(G)$.
  If $\B_{\G \setminus N_{\G}[A]} \setminus S'$ is a forest for 
 any independent set $A$ of $\G$, where $S' = S \setminus N_{\G}[A]$, then 
 $J(\G)^{(k)}$ is a componentwise linear ideal for all $k \geq 2$.
\end{corollary}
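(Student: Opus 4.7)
The plan is to apply Theorem~\ref{main}: it suffices to show that $\B_H$ is vertex decomposable whenever $H:=\G\setminus N_{\G}[A]$ for some independent set $A$ of $\G$. Setting $S':=S\setminus N_\G[A]$, I would first construct a shedding order of $H$ that starts with $S'$. For every $x\in S'$, the pendant $z_x$ survives in $H$, remains simplicial there, and thus makes $x$ a shedding vertex. Iterating, $S'$ is a valid prefix of a shedding order; the graph $H\setminus S'$ is itself a $W$-graph (apply the $W$-property of $\G$ to the independent set $A\cup\{z_x:x\in S'\}$), so any shedding order of $H\setminus S'$ completes ours. Under this choice, the i-order part of $\B_H$ consists of the i-order of $H\setminus S'$ together with the pendants $\{z_x:x\in S'\}$, which are isolated in $H\setminus S'$.

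With this construction, $\B_H\setminus S'$ coincides with $\B_{H\setminus S'}$: deleting $S'$ removes only the whisker edges $\{x,z_x\}$ and the edges from $S'$ to the i-order of $H\setminus S'$, leaving exactly the edges of $\B_{H\setminus S'}$. The hypothesis then says that $\B_{H\setminus S'}$ is a forest.

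Next I would verify that $\B_H$ is vertex decomposable by exhibiting, via Theorem~\ref{russ-rs}, a simplicial vertex of $\B_H\setminus N_{\B_H}[B]$ for every independent set $B$ of $\B_H$. Three cases arise. If some $x\in S'$ avoids $N_{\B_H}[B]$, then $z_x$ also avoids it (its only neighbor $x$ is outside $B$, and $z_x\in B$ would force $x\in N(B)$), so $z_x$ survives as a pendant adjacent only to $x$ and is simplicial. If $S'\subseteq N_{\B_H}[B]$ but some $x\in S'\setminus B$ has $z_x\notin B$, then $x$ is removed while $z_x$ remains as an isolated vertex, still simplicial. Otherwise every vertex of $S'\cup\{z_x:x\in S'\}$ is removed, and the residual graph is an induced subgraph of $\B_{H\setminus S'}$, hence a subforest; any leaf or isolated vertex of this subforest supplies the required simplicial vertex.

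The main obstacle is the last case: one must verify that for any two vertices $u,v\in V(H\setminus S')$ lying on opposite sides of the bipartition of $\B_H$, the edge $\{u,v\}$ belongs to $\B_H$ exactly when it belongs to $\B_{H\setminus S'}$, so that no extraneous edge persists after deleting $S'$. This reduces to checking that the bipartition of $\B_H$ restricted to $V(H\setminus S')$ agrees with the bipartition of $\B_{H\setminus S'}$, which is immediate from our choice of shedding order. Granting this, Theorem~\ref{russ-rs} delivers vertex decomposability of $\B_H$, and Theorem~\ref{main} yields componentwise linearity of $J(\G)^{(k)}$ for all $k\geq 2$.
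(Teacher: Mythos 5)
Your overall reduction coincides with the paper's: both arguments verify condition (1) of Theorem \ref{main} by showing that $\B_{\G\setminus N_{\G}[A]}$ is vertex decomposable for every independent set $A$ of $\G$, and then conclude. The difference lies in how that vertex decomposability is obtained. The paper's proof is two lines: it observes that $\deg_{\B_{\G\setminus N_{\G}[A]}}(z_x)=1$ for every $x\in S'$ and then invokes \cite[Corollary 4.6]{BFH15}, which yields vertex decomposability directly from the ``pendants plus forest after deleting $S'$'' structure. You instead reprove this special case from scratch: you build a shedding order of $\G\setminus N_{\G}[A]$ with $S'$ as a prefix and verify Woodroofe's criterion (Theorem \ref{russ-rs}) for $\B_{\G\setminus N_{\G}[A]}$ via your three cases (a surviving pendant $z_x$, an isolated $z_x$, or a residual graph contained in the forest $\B_{\G\setminus N_{\G}[A]}\setminus S'$). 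The case analysis is exhaustive and correct, so the argument goes through; it buys a self-contained proof at the cost of length and of two points you assert rather than justify: taking $S'$ as a prefix of a shedding order needs not only that each $x\in S'$ is a shedding vertex (neighbor of the simplicial vertex $z_x$, \cite[Corollary 7]{Wood2}) but also that deleting it preserves vertex decomposability, which is \cite[Corollary 3.5]{selva1}, used exactly this way in Example \ref{ex}; and your identification of $\B_{\G\setminus N_{\G}[A]}\setminus S'$ with $\B_{(\G\setminus N_{\G}[A])\setminus S'}$ (up to isolated vertices) is tied to your particular choice of shedding order, while the hypothesis refers to the graph $\B$ of Construction \ref{intr-cons} --- a looseness the paper's own proof shares, since $\B$ is not canonical. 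Neither point is a substantive gap.
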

\begin{proof}
Let $\G$ be a graph with vertex set $V(G) \cup \{z_x \mid x \in S\}$ and 
edge set $$E(G) \cup \{ \{x,z_x\} \mid x \in S\}.$$
Observe that $\deg_{\B_\G \setminus N_{\G}[A]}(z_{x})=1$ for all $x \in S'$.
 Since $\B_{\G \setminus N_{\G}[A]} \setminus S'$ is a forest,  
 by \cite[Corollary 4.8]{BFH15},
 $\B_{\G \setminus N_{\G}[A]}$ is a vertex decomposable graph. Therefore,
 by Theorem \ref{main}, $J(\G)^{(k)}$ is componentwise linear for all $k \geq 2$.
\end{proof}
\begin{remark}
Note that if $G \setminus S$ is a forest, then $\B_{\G \setminus N_{\G}[A]} \setminus S'$ is a forest for 
 any independent set $A$ of $\G$. Therefore, 
Corollary \ref{main-cor} allows us to quickly recover and extend for instance
\cite[Theorem 5.7]{DHNT20}, \cite[Corollary 4.5]{selva1}, \cite[Corollary 4.4]{KK20},
\cite[Theorem 3.11]{GTS20}.
\end{remark}
\iffalse
Mohammadi, Kiani and Yassemi proved that if $G$ is a unicyclic graph with cycle
$C_n$, $n \neq 3, 5$, then $G$ is vertex decomposable if and only 
if at least one whisker is attached to $C_n$ (\cite{FDY10}). 
%Therefore, $G$ is a $W$-graph. 
Note that 
if $G$ is a unicyclic graph with cycle
$C_n$, $n \neq 3, 5$ and vertex decomposable
graph, then $G=H \cup W(z)$, where $z$ is the vertex 
on cycle, is a graph such that $H \setminus z$ is a forest.
We now derive the main results of Kumar and Kumar (\cite[Corollary 4.4]{KK20}) and 
Gu, H\`a and Skelton (\cite[Theorem 3.11]{GTS20}).

\begin{corollary}\label{recover-rs} 
If $G\cup W(S)$, $S \subseteq V(G)$ is a graph such that 
$G \setminus S$ forest, then $J(G \cup W(S))^{(k)}$ is a 
componentwise linear ideal for all $k \geq 2$.

\end{corollary}
\begin{proof}
Let $\G=G \cup W(S)$ and $V(\G)=V(G) \cup \{z_x \mid x \in S\}$.
Note that 
 $\G \setminus N_{\G}[A]=H \cup W(S')$, where
 $H$ is an induced subgraph of $G$ and $S' \subseteq S$,
 and $H \setminus S'$ is a forest for any independent set 
 $A$ of $\G$. Therefore, by Theorem \ref{main}, it is enough to prove that
 $\B_{\G}$ is vertex decomposable. 
 Observe that $\deg_{\B_\G}(z_{x})=1$ for all $x \in S$.
 Since $G \setminus S$ is a forest,  
 $\B_\G \setminus S$ is a forest. 
 Therefore, \cite[Corollary 4.6]{BFH15},
 $\B_\G$ is a vertex decomposable graph.
 \end{proof}

\fi
In \cite{FH}, Francisco and H\`a proved that if 
   $|S| \geq |V(G)|-3$, where $S \subseteq V(G)$, 
   then $J(G \cup W(S))$ is componentwise linear.
   We extend this result to include all symbolic powers of $J(G \cup W(S))$.
 \begin{corollary}\label{3-ver}
   Let $G$ be a graph and $S \subseteq V(G)$. If 
   $|S| \geq |V(G)|-3$, then $J(G \cup W(S))^{(k)}$ is a componentwise linear ideal
   for all $k \geq 1$.
  \end{corollary}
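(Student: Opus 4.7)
The plan is to apply Corollary \ref{main-cor} with $\G = G\cup W(S)$. Set $T = V(G)\setminus S$, so by hypothesis $|T|\leq 3$. Two conditions must be verified: that $\G$ is a $W$-graph, and that for every independent set $A$ of $\G$, writing $S' = S\setminus N_\G[A]$, the graph $\B_{\G\setminus N_\G[A]}\setminus S'$ is a forest.

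For the $W$-graph condition, let $B\subseteq V(G)$ denote the set of base vertices surviving $A$. Unpacking $N_\G[A]$, the induced subgraph $\G\setminus N_\G[A]$ equals $G[B]\cup W(B\cap S)$ together with possibly some isolated whisker vertices (those $z_x$ whose base vertex $x$ is removed by $A$ while $z_x$ itself is not). If $\G\setminus N_\G[A]$ contains any whisker vertex, attached or isolated, that whisker is a simplicial vertex. Otherwise $B\cap S = \emptyset$ and there are no loose whiskers, so $\G\setminus N_\G[A] = G[B]$ with $B\subseteq T$; then $|B|\leq 3$, and a direct enumeration confirms that every graph on at most three vertices has a simplicial vertex. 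Hence $\G\setminus N_\G[A]$ always has a simplicial vertex, so $\G$ is a $W$-graph.

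For the forest condition, observe $S' = B\cap S$. Each $x\in B\cap S$ is adjacent to its simplicial whisker $z_x$, hence is a shedding vertex. I would choose the shedding order in Construction \ref{intr-cons} to start with all of $B\cap S$ followed by any $X\subseteq B\setminus S$ needed to cover the edges of $G[B\setminus S]$. Then the $\alpha$-side of $\B_{\G\setminus N_\G[A]}$ is $(B\cap S)\cup X$, and the $\gamma$-side contains the whiskers $\{z_x:x\in B\cap S\}$, the set $(B\setminus S)\setminus X$, and any loose isolated whiskers. Deleting $S'$ removes every whisker edge, so $\B_{\G\setminus N_\G[A]}\setminus S'$ is the disjoint union of isolated vertices (the orphaned whiskers) with a bipartite subgraph of $G[B\setminus S]$. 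Since $|B\setminus S|\leq|T|\leq 3$ and bipartite graphs contain no odd cycle, so the shortest possible cycle has length four, this bipartite subgraph is acyclic, hence a forest.

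Corollary \ref{main-cor} now yields $J(\G)^{(k)}$ componentwise linear for all $k\geq 2$. For $k=1$, since $\G$ is a $W$-graph, Theorem \ref{russ-rs} shows $\G$ is vertex decomposable, hence sequentially Cohen-Macaulay, so $J(\G)$ is componentwise linear; alternatively, Theorem \ref{main} delivers all $k\geq 1$ in a single step. The main bookkeeping obstacle is pinning down the exact structure of $\G\setminus N_\G[A]$ and its bipartite associate, but once that is in place the $|T|\leq 3$ hypothesis is precisely what makes the residual bipartite piece too small to host a cycle.
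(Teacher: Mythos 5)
Your proof is correct and follows essentially the same route as the paper: reduce to Corollary \ref{main-cor} by noting that $|V(G)\setminus S|\le 3$ forces $\B_{\G\setminus N_{\G}[A]}\setminus S'$ to be a forest, since after deleting $S'$ only a bipartite (hence triangle-free) graph on at most three non-whisker vertices can remain. Your explicit verification that $\G$ is a $W$-graph and your separate handling of $k=1$ merely fill in steps the paper leaves implicit (it relies on the earlier observation that $H\cup W(S)$ is a $W$-graph when $H\setminus S$ is chordal), so the substance is the same.
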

  \begin{proof}
  Set 
   $\G=G \cup W(S)$.
   Since $|S| \geq |V(G)| - 3$, we have 
   $G \setminus S$ is a graph on at most 3 vertices. Thus, 
   $G \setminus S$ is either a three-cycle, a tree, or set of  isolated vertices. 
   Therefore,  $\B_{\G \setminus N_{\G}[A]} \setminus S'$ is a forest for 
 any independent set $A$ of $\G$, where $S'=S \setminus N_G[A]$. By Corollary \ref{main-cor},
 $J(\G)^{(k)}$ is a componentwise linear ideal for all $k \geq 2$.
  \end{proof}
  %\begin{remark}
 %We would like to note here that, Corollary \ref{3-ver} is a generalization of 
 %\cite[Theorem 3.11]{GTS20} and \cite[Corollary 4.5]{selva1} when 
 %$G \setminus S$ is a three-cycle.
 % \end{remark}

  We recall the definition of star graph based on a complete graph from \cite[page 7]{HerHibiOhsugi}.
\begin{definition}\label{star-graph}
 We say that $G$ is a star graph based on a complete graph $K_n$ if $G$ is connected and 
 $V(G)=\{x_1,\ldots,x_n,y_1,\ldots,y_m\}$ such that:
 \begin{enumerate}
  \item the complete graph on $\{x_1,\ldots,x_n\}$ is a subgraph of $G$ and
  \item there is no edge in $G$ connecting $y_i$ and $y_j$ for all $1 \leq i< j \leq m$.
 \end{enumerate}
\end{definition}

We make some remarks which follow directly from 
Definition \ref{star-graph} and Construction \ref{intr-cons}.
\begin{remark}\label{star-obs}
Let $G$ be a star graph based on a complete graph $K_n$ with 
$V(G)=$ $\{x_1,\ldots,x_n$, $y_1,\ldots,y_m\}$. 
\begin{enumerate}
 \item Since $G$ is a chordal graph, $G$ is a 
$W$-graph.
If $|N_G(y_1,\ldots,y_m)| \leq n-1$, say $x_1 \notin N_G(y_1,\ldots,y_m)$, then 
$(x_2,\ldots,x_n)$ is a shedding order of $G$ and 
$\{x_1,y_1,\ldots,y_m\}$ is an $i$-order of 
$G$. Suppose $|N_G(y_1,\ldots,y_m)| =n$. Let $N_{G \setminus N_G(y_1,\ldots,y_{i-1})}(y_i)=
\{x_{i1},\ldots,x_{it_i}\}$ for all $1 \leq i \leq m$.
Then $(x_{11},\ldots,x_{1t_1},\ldots,x_{m1},\ldots,x_{mt_m})$ is a shedding order of $G$
and $\{y_1,\ldots,y_m\}$ is an $i$-order of $G$.
Note that $\{x_1,\ldots,x_n\}=\{x_{11},\ldots,x_{1t_1},\ldots,x_{m1},\ldots,x_{mt_m}\}$ and 
$n=t_1+\ldots+t_m$.
\item Let $A$ be an independent set of $G$. If $A \subseteq \{y_1,\ldots,y_m\}$, then 
we have $\B_{G \setminus N_G[A]}=\B_{G} \setminus N_{\B_G}[A].$
If $x_i \in A$ for some $1 \leq i \leq n$, then $G \setminus N_G[A]$ is 
totally disconnected. Therefore, if 
$\B_G$ is a vertex decomposable graph, then 
 $\B_{G \setminus N_{G}[A]}$ is vertex decomposable for any non-empty independent set 
 $A$ of $G$.
\end{enumerate}
\end{remark}

As an immediate consequence of Theorem \ref{main} and Remark \ref{star-obs},
we give an answer to \cite[Question 5.13]{DHNT20}.

\begin{corollary}\label{star-main}
Let $G$ be a star graph based on a complete graph $K_n$.
 Then the following are equivalent:
 \begin{enumerate}
  \item $\B_G$ is a vertex decomposable graph;
   \item  $J(G)^{(k)}$ has linear quotients for all $k \geq 1$;
  \item $J(G)^{(k)}$ is a componentwise linear ideal for all $k \geq 2$;
  \item $J(G)^{(k)}$ is a componentwise linear ideal for some $k \geq 2$.
 \end{enumerate}
 \end{corollary}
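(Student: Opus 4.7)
The plan is to apply Theorem \ref{main} directly, leveraging Observation \ref{star-obs} to verify its hypotheses. Since any star graph based on a complete graph $K_n$ is chordal, it is a $W$-graph by the discussion on Page \pageref{wgraphs}, so Theorem \ref{main} applies.

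For the implication $(1) \Rightarrow (2)$, I would assume $\B_G$ is vertex decomposable and verify that $\B_{G \setminus N_G[A]}$ is vertex decomposable for every independent set $A$ of $G$. By Observation \ref{star-obs}(2), there are two cases: if $A \subseteq \{y_1,\ldots,y_m\}$, then $\B_{G \setminus N_G[A]} = \B_G \setminus N_{\B_G}[A]$, and vertex decomposability is preserved under taking links of independent sets in a vertex decomposable graph by Theorem \ref{ind-sub}; if $A$ contains some $x_i$, then $G \setminus N_G[A]$ is totally disconnected, so its associated spanning bipartite subgraph has no edges and is trivially vertex decomposable. In either case hypothesis (1) of Theorem \ref{main} holds, and we conclude $J(G)^{(k)}$ is componentwise linear for all $k \geq 1$, in particular for all $k \geq 2$.

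The implication $(2) \Rightarrow (3)$ is immediate. For $(3) \Rightarrow (1)$, assume $J(G)^{(k)}$ is componentwise linear for some $k \geq 2$. By Theorem \ref{main}, $\B_{G \setminus N_G[A]}$ is vertex decomposable for every independent set $A$ of $G$; taking the empty independent set $A = \emptyset$ gives $\B_G$ vertex decomposable.

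Since all the heavy lifting is already encoded in Theorem \ref{main} and Observation \ref{star-obs}, there is no serious obstacle here — the only point requiring attention is confirming that the vertex decomposability of $\B_{G \setminus N_G[A]}$ for \emph{every} independent set $A$ (as required by Theorem \ref{main}) follows from just the vertex decomposability of $\B_G$, which is exactly what Observation \ref{star-obs}(2) provides thanks to the structural rigidity of star graphs based on complete graphs.
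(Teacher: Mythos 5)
Your proposal is correct and follows exactly the paper's route: the paper derives this corollary as an immediate consequence of Theorem \ref{main} and Observation \ref{star-obs}, using chordality to see $G$ is a $W$-graph and the two cases of Observation \ref{star-obs}(2) (together with $A=\emptyset$) to verify the hypothesis on $\B_{G\setminus N_G[A]}$, just as you do.
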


 The following remark shows that there is a one-to-one correspondence between
 the family of  star graphs based on a complete graph $K_n$ 
 and the family of bipartite graphs.
 
\begin{remark}
If $G$ is a star graph based on a complete graph $K_n$ with $V(G)$ $=\{x_1,\ldots,x_n$,
 $y_1,\ldots,y_m\}$ and $|N_G(y_1,\ldots,y_m)|=n$, then we can associate the 
 bipartite graph $B_G$ with partitions 
 $\{x_1,\ldots,x_n\} \sqcup \{y_1,\ldots,y_m\}$ and 
 $$E(B_G)=\{\{x_i,y_j\} \in E(G) \mid 1 \leq i \leq n,~1\leq j \leq m\}.$$
 Note that $(x_1,\ldots,x_n)$ is a shedding order of $G$ and 
 $\{y_1,\ldots,y_m\}$ is an $i$-order of $G$. Therefore, by Construction \ref{intr-cons},
 $B_G=\B_G$.  
 
 If $B$ is a bipartite graph with partitions $\{x_1,\ldots,x_n\} \sqcup 
 \{y_1,\ldots,y_m\}$ and no isolated vertices, then we can associate 
 the star graph $H$ based on a complete graph $K_n$ with $V(H)$ $=\{x_1,\ldots,x_n$,
 $y_1,\ldots,y_m\}$ and $|N_H(y_1,\ldots,y_m)|=n$. 
 It follows from Construction \ref{intr-cons} that 
 $B=\B_H$. 
 
 Therefore, the assignment $G \longrightarrow \B_G$ establishes a natural 
 one-to-one correspondence  between the family of  
 star graphs based on a complete graph $K_n$ with vertices 
 $\{x_1,\ldots,x_n$, $y_1,\ldots,y_m\}$ and $|N_G(y_1,\ldots,y_m)|=n$ and the 
 family of bipartite graphs with partitions
 $\{x_1,\ldots,x_n\}$ $\sqcup$ $\{y_1,\ldots,y_m\}$ and no isolated vertices
 up to isomorphism.

\end{remark}

We recall the definition from \cite{HHM20}.

\begin{definition} \label{n-clique-def}
 Let $V=\{x_1,\ldots,x_p\} \cup \{y_{ij} \mid 1 \leq i \leq n,~1 \leq j \leq m_i\}$ be a
 finite set. We write $G_{p,m_i}$ for the complete graph  on
 $\{x_1,\ldots,x_p,y_{i1},\ldots,y_{im_i}\}$. Let 
 $\Gamma_{p,m_1,\ldots,m_n}$ be a graph with $V(\Gamma_{p,m_1,\ldots,m_n})=V$ and 
 $E(\Gamma_{p,m_1,\ldots,m_n})=\bigcup \limits_{1 \leq i \leq n} E(G_{p,m_i})$.
 We call $\Gamma_{p,m_1,\ldots,m_n}$ is an $n$-clique graph.
 \end{definition}

Below are 2 examples of $n$-clique graphs.

\begin{minipage}{\linewidth}
  %\centering
\begin{minipage}{0.36\linewidth}

\begin{figure}[H]
\begin{tikzpicture}[scale=0.6]
\draw (3,3)-- (2,2);
\draw (3,3)-- (4,2);
\draw (2,2)-- (4,2);
\draw (2,4)-- (3,3);
\draw (4,4)-- (3,3);
\draw (3,5)-- (2,4);
\draw (3,5)-- (4,4);
\draw (3,5)-- (3,3);
\draw (2,4)-- (4,4);
\draw (3,3)-- (4,3);
\draw (8,2)-- (7,3);
\draw (9,3)-- (8,2);
\draw (7,3)-- (9,3);
\draw (7.02,4.26)-- (7,3);
\draw (8.96,4.24)-- (9,3);	
\draw (7.02,4.26)-- (8.96,4.24);
\draw (8.96,4.24)-- (7,3);
\draw (7.02,4.26)-- (9,3);
\draw (8.96,4.24)-- (8,2);
\draw (7.02,4.26)-- (8,2);
\draw (7.04,5.44)-- (9,5.4);
\draw (7.04,5.44)-- (7.02,4.26);
\draw (9,5.4)-- (8.96,4.24);
\draw (9,5.4)-- (7.02,4.26);
\draw (8.96,4.24)-- (7.04,5.44);
\draw (2.66,1.66) node[anchor=north west] {$ \Gamma_{1,3,2,1} $};
\draw (7.64,1.62) node[anchor=north west] {$\Gamma_{2,2,3}$};
\begin{scriptsize}
\fill [color=black] (3,3) circle (1.5pt);
\draw[color=black] (2.4,3.02) node {$x_1$};
\fill [color=black] (2,2) circle (1.5pt);
\draw[color=black] (1.5,2.06) node {$y_{21}$};
\fill [color=black] (4,2) circle (1.5pt);
\draw[color=black] (4.56,2.26) node {$y_{22}$};
\fill [color=black] (2,4) circle (1.5pt);
\draw[color=black] (1.5,4.06) node {$y_{11}$};
\fill [color=black] (4,4) circle (1.5pt);
\draw[color=black] (4.36,4.26) node {$y_{12}$};
\fill [color=black] (3,5) circle (1.5pt);
\draw[color=black] (3.14,5.26) node {$y_{13}$};
\fill [color=black] (4,3) circle (1.5pt);
\draw[color=black] (4.16,3.26) node {$y_{31}$};
\fill [color=black] (8,2) circle (1.5pt);
\draw[color=black] (8.54,1.84) node {$y_{22}$};
\fill [color=black] (7,3) circle (1.5pt);
\draw[color=black] (6.5,3.04) node {$y_{23}$};
\fill [color=black] (9,3) circle (1.5pt);
\draw[color=black] (9.46,3.14) node {$y_{21}$};
\fill [color=black] (7.02,4.26) circle (1.5pt);
\draw[color=black] (6.68,4.34) node {$x_1$};
\fill [color=black] (8.96,4.24) circle (1.5pt);
\draw[color=black] (9.3,4.5) node {$x_2$};
\fill [color=black] (7.04,5.44) circle (1.5pt);
\draw[color=black] (7.22,5.7) node {$y_{11}$};
\fill [color=black] (9,5.4) circle (1.5pt);
\draw[color=black] (9.16,5.66) node {$y_{12}$};
\end{scriptsize}
\end{tikzpicture}
	\end{figure}
\end{minipage}
\begin{minipage}{0.59\linewidth}

 In \cite[Corollary 4.7]{HHM20}, Herzog, Hibi, and Moradi proved that 
 if $G=\Gamma_{p,m_1,m_2}$, then $J(G)^k$ is a componentwise linear ideal for all
 $k \geq 1$. Hence it is natural to ask: If $G=\Gamma_{p,m_1,m_2}$, is it true that
 $J(G)^{(k)}$ is componentwise linear for all $k \geq 2$? The answer is "No". For example
 if $G=\Gamma_{2,1,1}$, then 
 $\B_G$ is a cycle with length 4. Therefore, by Theorem \ref{suff-cond1}, 
 $J(G)^{(k)}$ is not componentwise linear for all 
 $k \geq 2$. 
\end{minipage}
\end{minipage}
Now, we classify the graph $G=\Gamma_{p,m_1,\ldots,m_n}$
 such that $J(G)^{(k)}$ is a componentwise linear ideal for all $k \geq 2$. 
 Let $G=\Gamma_{p,m_1,\ldots,m_n}$ be a graph. If $p=1$, then 
$G$ is a star complete graph (see the definition in \cite{selva1}). 
Therefore, by \cite[Theorem 4.2]{selva1},
$J(G)^{(k)}$ is a componentwise linear ideal for all $k \geq 2$.

\begin{corollary}\label{n-clique}
 Let $G=\Gamma_{p,m_1,\ldots,m_n}$ be a graph with $p>1$. Then the following are 
 equivalent:
 \begin{enumerate}
  %\item %at most one $m_i$ is equal to 1 for some $1 \leq i \leq n$;
  %\begin{enumerate}
         \item $m_i>1$ for all $1 \leq i \leq n$, or
         exactly one $m_i=1$ for some $1 \leq i \leq n$ 
         and $m_j>1$ for all $1 \leq j \neq i \leq n$;
   %\end{enumerate}
   \item $J(G)^{(k)}$ has linear quotients for all $k \geq 1$;
\item $J(G)^{(k)}$ is a componentwise linear ideal for all $k \geq 1$;
\item $J(G)^{(k)}$ is a componentwise linear ideal for some $k \geq 2$.
\end{enumerate}
\end{corollary}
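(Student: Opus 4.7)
The plan is to apply Theorem \ref{main}. Since $G=\Gamma_{p,m_1,\ldots,m_n}$ is obtained by gluing the cliques $G_{p,m_i}$ along the common clique $\{x_1,\ldots,x_p\}$, an induced cycle of length $\geq 4$ would have to contain at most one $x$-vertex (as all $x_l$ are pairwise adjacent) and would then force all remaining $y$-vertices in the cycle to share a first index, producing a chord. Hence $G$ is chordal, and by Dirac's theorem combined with Theorem \ref{russ-rs}, $G$ is a $W$-graph. It therefore suffices to determine when $\B_{G\setminus N_G[A]}$ is vertex decomposable for every independent set $A$ of $G$.

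I would first reduce to the case $A=\emptyset$. If some $x_l\in A$, then $N_G[x_l]=V(G)$, so $G\setminus N_G[A]=\emptyset$ and the associated bipartite graph is trivially vertex decomposable. Otherwise $A\subseteq\{y_{ij}\}$, and since $y_{ij},y_{ij'}$ are adjacent whenever they share the same first index, $A=\{y_{i_1,j_1},\ldots,y_{i_s,j_s}\}$ with distinct $i_k$. A direct calculation shows $G\setminus N_G[A]$ is the disjoint union of cliques $K_{m_i}$ for $i\notin\{i_1,\ldots,i_s\}$, and applying Construction \ref{intr-cons} to this graph shows $\B_{G\setminus N_G[A]}$ is a disjoint union of stars, hence a forest, vertex decomposable by \cite[Corollary 4.6]{BFH15}. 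Thus the problem reduces to deciding when $\B_G$ itself is vertex decomposable.

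Applying Construction \ref{intr-cons} to $G$, I take the shedding order $x_1,\ldots,x_p,y_{1,1},\ldots,y_{1,m_1-1},\ldots,y_{n,1},\ldots,y_{n,m_n-1}$ (each such vertex is a neighbor of a simplicial vertex, hence shedding, and each residual graph is a disjoint union of cliques, hence vertex decomposable) and the $i$-order $y_{1,m_1},\ldots,y_{n,m_n}$. Then $\B_G$ is the bipartite graph with parts $\{y_{1,m_1},\ldots,y_{n,m_n}\}$ and $\{x_1,\ldots,x_p\}\cup\{y_{ij}:j<m_i\}$ in which $y_{i,m_i}$ is adjacent to every $x_l$ and to every $y_{ij}$ with $j<m_i$. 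Equivalently, $\B_G$ is the complete bipartite graph $K_{n,p}$ (between $\{y_{i,m_i}\}$ and $\{x_l\}$) together with $m_i-1$ whisker leaves attached to $y_{i,m_i}$ for each $i$.

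The core claim is that $\B_G$ is vertex decomposable if and only if at most one $m_i$ equals $1$, which I would prove by induction on $n$ using the Van Tuyl--Villarreal recursive characterization (Theorem \ref{adam-vila-rs}) together with Theorem \ref{adam-rs}. For the \emph{if} direction, pick an index $i$ with $m_i\geq 2$ (which exists except in the trivial base $n=1$, where $\B_G$ is already a star); taking the leaf $x=y_{i,1}$ with neighbor $y=y_{i,m_i}$, one checks that $\B_G\setminus N_{\B_G}[x]$ equals $\B_{\Gamma_{p,(m_k)_{k\neq i}}}$ together with isolated vertices (vertex decomposable by induction, since the condition is preserved), while $\B_G\setminus N_{\B_G}[y]$ is a disjoint union of stars. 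For the \emph{only if} direction, suppose $m_{i_1}=m_{i_2}=1$ with $i_1\neq i_2$. If every $m_i=1$ then $\B_G=K_{n,p}$ with $n,p\geq 2$, which has no leaf and is not totally disconnected, so is not sequentially Cohen--Macaulay by Theorem \ref{adam-vila-rs}. Otherwise every leaf is some $y_{ij}$ with $m_i\geq 2$, and the same computation shows $\B_G\setminus N_{\B_G}[y_{ij}]=\B_{\Gamma_{p,(m_k)_{k\neq i}}}\cup\{\text{isolated vertices}\}$; since $i_1,i_2\neq i$, the index tuple $(m_k)_{k\neq i}$ still has two entries equal to $1$, and induction yields that this graph is not sequentially Cohen--Macaulay. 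Theorem \ref{adam-vila-rs} then forces $\B_G$ itself to fail sequential Cohen--Macaulayness. The main bookkeeping challenge will be to keep track of which vertex lives on which side of the bipartition after each deletion so that the recursive structure cleanly matches $\B_{\Gamma_{p,(m_k)_{k\neq i}}}$.
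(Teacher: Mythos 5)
Your proposal is correct, and its overall skeleton coincides with the paper's: establish that $G$ is chordal (hence a $W$-graph), identify $\B_G$ as a complete bipartite graph $K_{n,p}$ between one chosen $y$-vertex per clique and $\{x_1,\ldots,x_p\}$ with $m_i-1$ whisker leaves attached to the $i$-th center, observe that for $A\neq\emptyset$ the graph $G\setminus N_G[A]$ is either empty or a disjoint union of cliques so that $\B_{G\setminus N_G[A]}$ is a forest, and then invoke Theorem \ref{main} for sufficiency and Theorem \ref{suff-cond1} for necessity. Where you genuinely diverge is in deciding when $\B_G$ itself is vertex decomposable. The paper handles sufficiency by a direct appeal to a whisker-type criterion (\cite[Corollary 4.8]{BFH15}): when all $m_i>1$ the deletion of the closed neighborhoods of the leaves $y_{i2}$ leaves a totally disconnected graph, and when exactly one $m_i=1$ the deletion of the other leaves leaves a tree; and it handles necessity in one shot, deleting $N_{\B_G}[A]$ for $A$ the set of leaves away from the two indices with $m=1$ and noting the resulting bipartite graph has no degree-one vertex, contradicting Theorem \ref{adam-vila-rs}. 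You instead run an induction on the number of cliques using the Van Tuyl--Villarreal recursion for both directions, peeling off one index $i$ with $m_i\geq 2$ at a time via the leaf $y_{i,1}$ and identifying $\B_G\setminus N[y_{i,1}]$ with $\B_{\Gamma_{p,(m_k)_{k\neq i}}}$ up to isolated vertices, with base cases a star and $K_{n,p}$, $n,p\geq 2$. Both arguments are sound; the paper's is shorter but leans on the external criterion from \cite{BFH15}, while yours is self-contained relative to Theorems \ref{adam-rs} and \ref{adam-vila-rs} already quoted in the paper, at the cost of the bookkeeping you mention and of the (standard, but worth stating) fact that adjoining isolated vertices changes neither sequential Cohen--Macaulayness nor its failure.
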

\begin{proof}
Let $G$ be a graph with 
$V(G)=\{x_1,\ldots,x_p\} \cup \{y_{ij} \mid 1 \leq i \leq n,~1 \leq j \leq m_i\}$.
Since $G$ is a chordal graph, $G$ is a $W$-graph.
 By Theorem \ref{main}, it is enough to prove the following are equivalent:
 \begin{enumerate}
  
\item[(i)] $m_i>1$ for all $1 \leq i \leq n$, or
         exactly one $m_i=1$ for some $1 \leq i \leq n$ 
         and $m_j>1$ for all $1 \leq j \neq i \leq n$;
 \item[(ii)] $\B_{G\setminus N_G[A]}$ is a
vertex decomposable graph for any independent set $A$ of $G$.
 \end{enumerate}
(i)$\Rightarrow$ (ii)
  Let $A$ be an independent set of $G$.
  \vskip 0.1mm \noindent
  \textsc{Case I:} Suppose $A \neq \emptyset$.
  
 If $x_i \in A$ for some $1 \leq i \leq p$, then 
 $G \setminus N_{G}[A]$ is totally disconnected. 
 If $x_{i} \notin A$ for some $1 \leq i \leq p$, then 
 $G \setminus N_{G}[A]$ is a disjoint union of complete graphs.
 It follows from Construction \ref{intr-cons} that 
 $\B_{G \setminus N_G[A]}$ is a forest.  Therefore, 
 $\B_{G \setminus N_G[A]}$ is a vertex decomposable for any 
 non-empty independent set $A$ of $G$.
 \vskip 0.1mm \noindent
  \textsc{Case II:} Suppose $A = \emptyset$.
 
 Assume that  $m_i>1$ for all $1 \leq i \leq n$.
 Note that  
 $$(x_1,\ldots,x_p, y_{12},\ldots,y_{1m_1}, y_{22},\ldots,y_{2m_2}, \ldots,
 y_{n2},\ldots,y_{nm_n})$$ is a shedding order of $G$
 and $\{y_{11},y_{21},\ldots,y_{n1}\}$ is an 
 $i$-order of $G$. By Construction \ref{intr-cons},
 \begin{align*}
  E(\B_G)&=\{ \{y_{k1},x_i\}, \{y_{k1}, y_{kj}\} \mid 1 \leq i \leq p,~1 \leq k \leq n,
  ~2 \leq j \leq m_k\}.
 \end{align*}
Since $\B_G \setminus \{y_{11},\ldots,y_{n1}\}$ is totally
disconnected and $\deg_{\B_G}(y_{j2})=1$ for all $1 \leq j \leq n$,
 we have $\B_G$ is  vertex decomposable by \cite[Corollary 4.8]{BFH15}.
  
  %Therefore $\B_G \setminus N_{\B_G}[y_{12},\ldots,y_{n2}]$ is totally disconnected.
  %Hence, by \cite[Corollary 4.8]{BFH15}, $\B_G$ is  vertex decomposable.

 Suppose $m_i=1$ for some $1 \leq i \leq n$. 
 Note that  
 $$(x_1,\ldots,x_p, y_{12},\ldots,y_{1m_1}, \ldots, 
 y_{i-12},\ldots,y_{i-1m_{i-1}}, y_{i+12}, \ldots, y_{i+1m_{i+1}}, \ldots,
 y_{n2},\ldots,y_{nm_n})$$ is a shedding order of $G$
 and $\{y_{11},y_{21},\ldots,y_{n1}\}$ is an 
 $i$-order of $G$. By Construction \ref{intr-cons},
 \begin{align*}
  E(\B_G)&= \{ \{y_{k1},x_l\}, \{y_{k1}, y_{kj}\} \mid 1 \leq l \leq p,~1 \leq k \neq i 
  \leq n,
  ~2 \leq j \leq m_k\}\\
  & \bigcup \{ \{y_{i1},x_l\} \mid 1 \leq l \leq p\}.
 \end{align*} 
 Since 
 $\deg_{\B_G}(y_{j2})=1$ for all $1 \leq j \neq i \leq n$, 
 $\B_G \setminus \{y_{11},\ldots,y_{(i-1)1},y_{(i+1)1},\ldots,y_{n1}\}$ is a 
 tree. Again, by \cite[Corollary 4.8]{BFH15}, $\B_G$ is  vertex decomposable.
\vskip 1mm
\noindent
 (ii)$\Rightarrow$ (i)
 %Let $\B_{G\setminus N_G[A]}$ be a
%vertex decomposable graph for any independent set $A$ of $G$.
Suppose 
$m_l>1$ for all $l \neq i,j$ and $m_i=1=m_j$.
Then 
$$(x_1,\ldots,x_p, y_{12},\ldots,y_{1m_1}, \ldots, 
 y_{i-12},\ldots,y_{i-1m_{i-1}}, \ldots, y_{j-12}, \ldots, y_{j-1m_{j-1}}, \ldots,
 y_{n2},\ldots,y_{nm_n})$$ is a shedding order of $G$
 and $\{y_{11},y_{21},\ldots,y_{n1}\}$ is an 
 $i$-order of $G$. By Construction \ref{intr-cons},
 \begin{align*}
  E(\B_G)&= \{ \{y_{k1},x_l\}, \{y_{k1}, y_{kj}\} \mid 1 \leq l \leq p,~1 \leq k \neq i,j 
  \leq n,
  ~2 \leq j \leq m_k\}\\
  & \bigcup \{ \{y_{i1},x_l\} \mid 1 \leq l \leq p\}\bigcup 
  \{ \{y_{j1},x_l\} \mid 1 \leq l \leq p\}.
 \end{align*} 
%By Theorem \ref{ind-sub}, $\B_G \setminus N_{\B_G}[B]$ is 
%vertex decomposable for any independent set $B$.
Set $$B=\{y_{l2} \mid 1 \leq l \leq n, ~l \notin \{i,j\}\}.$$
%Since $\B_G$ is  vertex decomposable and  
%$B$ is an independent set of 
%$\B_G$, by Theorem \ref{ind-sub}, 
%$\B_G\setminus N_{\B_G}[B]$ is a vertex decomposable graph.
One can see that $\deg_{\B_G \setminus N_{\B_G}[B]}(z)>1$ for all $z \in 
V(\B_G \setminus 
N_{\B_G}[B]).$ 
Therefore, by Theorem \ref{adam-rs}, Theorem \ref{adam-vila-rs},
$\B_G \setminus N_{\B_G}[B]$ is not a vertex decomposable graph.
This is a contradiction to $\B_G$ is 
vertex decomposable bipartite. Therefore,
$m_i>1$ for all $1 \leq i \leq n$ or
         exactly one $m_i=1$ for some $1 \leq i \leq n$ 
         and $m_j>1$ for all $1 \leq j \neq i \leq n$.
 \end{proof}

 \section{Powers of cover ideals of bipartite graphs}\label{bipartite}

In this section, we prove that if $G$ is a bipartite graph, then 
$J(G)$ is a componentwise linear ideal if and only if $J(G)^k$ is a componentwise linear
ideal for some (equivalently, for all) $k \geq 2$.

The following result is a generalization of Theorem \ref{not-cl} when $G$ is a 
bipartite graph.

 \begin{theorem}\label{bi-not-cl}
 Let $G$ be a bipartite graph. If $J(G)$ is not componentwise linear ideal, then 
 $J(G)^k$ is not a componentwise linear ideal for all $k\geq 2$.
\end{theorem}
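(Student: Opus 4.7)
The plan is to prove the contrapositive: if $J(G)^k$ is componentwise linear for some $k\geq 2$, then $J(G)$ is componentwise linear. Since $G$ is bipartite, \cite[Corollary 2.6]{GRV05} gives $J(G)^k=J(G)^{(k)}$, so via polarization and Lemma \ref{fak-result} this hypothesis is equivalent to $G_k$ being sequentially Cohen-Macaulay. A direct check shows that if $G$ has bipartition $X\sqcup Y$, then $G_k$ is bipartite with parts $\{x_{i,p}:x_i\in X,\,1\leq p\leq k\}$ and $\{x_{j,q}:x_j\in Y,\,1\leq q\leq k\}$, so by Theorem \ref{adam-rs} sequential Cohen-Macaulayness of $G_k$ coincides with its vertex decomposability.

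I proceed by induction on $|V(G)|$; the base case of graphs with no edges is immediate. For the inductive step, suppose $G_k$ is sequentially Cohen-Macaulay and that $G$ has at least one edge. A degree count in Construction \ref{construction} gives $\deg_{G_k}(x_{i,p})=(k+1-p)\deg_G(x_i)$, so the leaves of $G_k$ are precisely the vertices $x_{i,k}$ with $x_i$ a leaf of $G$. Applying Theorem \ref{adam-vila-rs} to the bipartite sequentially Cohen-Macaulay graph $G_k$ (which has an edge) produces such a leaf, so $G$ itself has a leaf $x_i$ with unique neighbor $y_i$.

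The central step is the following link identity in $G_k$: for any vertex $v\in V(G)$, the set $\{v_p:1\leq p\leq k\}$ (writing $v_p$ for the corresponding vertex of $G_k$) is independent in $G_k$ since self-loops are not allowed, and using that $1+q\leq k+1$ for every $q\in\{1,\ldots,k\}$ one checks
\[
N_{G_k}\bigl(\{v_p:1\leq p\leq k\}\bigr)=\{x_{j,q}:x_j\in N_G(v),\,1\leq q\leq k\},
\]
whence $G_k\setminus N_{G_k}[\{v_p:1\leq p\leq k\}]=(G\setminus N_G[v])_k$. Writing $H:=G\setminus N_G[x_i]$ and $H':=G\setminus N_G[y_i]$, Theorem \ref{ind-sub} forces $H_k$ and $H'_k$ to be sequentially Cohen-Macaulay. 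The inductive hypothesis, applied to the bipartite graphs $H$ and $H'$ on strictly fewer vertices, then yields that $H$ and $H'$ are themselves sequentially Cohen-Macaulay. The converse direction of Theorem \ref{adam-vila-rs} now assembles the pieces: since $x_iy_i$ is an edge of $G$ with $\deg_G(x_i)=1$ and both $H,H'$ are sequentially Cohen-Macaulay, $G$ is sequentially Cohen-Macaulay, so $J(G)$ is componentwise linear.

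I expect the main technical obstacle to be verifying the link identity $G_k\setminus N_{G_k}[\{v_p:1\leq p\leq k\}]=(G\setminus N_G[v])_k$; this is a short but bookkeeping-heavy computation that uses the full strength of the edge condition $p+q\leq k+1$ in Construction \ref{construction}, together with the observation that taking $p=1$ already witnesses the inequality for every $q\in\{1,\ldots,k\}$. Once that identity is in hand, the rest of the proof is a clean pairing of the two directions of Theorem \ref{adam-vila-rs} with the link-preservation result Theorem \ref{ind-sub}.
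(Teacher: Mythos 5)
Your argument is correct, but it follows a genuinely different route from the paper. The paper's proof is a direct descent in $k$: for a bipartite graph the deletion of the top-level copies of one side of the bipartition gives $G_i \setminus N_{G_i}[x_{1,i},\ldots,x_{n,i}] \simeq G_{i-1}$ (up to isolated vertices), so if some $G_k$ were sequentially Cohen--Macaulay, repeated use of Theorem \ref{ind-sub} would force $G_{k-1},\ldots,G_1=G$ to be sequentially Cohen--Macaulay, contradicting that $J(G)$ is not componentwise linear; this is exactly the argument of Theorem \ref{not-cl} except that bipartiteness shrinks the step size from two to one, which is why no auxiliary hypotheses are needed. You instead prove the contrapositive by induction on $|V(G)|$: you detect a leaf of $G$ from a leaf of $G_k$ via the degree formula $\deg_{G_k}(x_{i,p})=(k+1-p)\deg_G(x_i)$ and the forward direction of Theorem \ref{adam-vila-rs}, use the link identity $G_k\setminus N_{G_k}[\{v_p:1\leq p\leq k\}]=(G\setminus N_G[v])_k$ (a clean variant of \cite[Lemma 3.3(3)]{selva1}, with all $k$ copies of $v$ deleted so no stray isolated vertices appear) together with Theorem \ref{ind-sub} to pass the hypothesis to $H_k$ and $H'_k$, and then close with the converse direction of Theorem \ref{adam-vila-rs}. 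Both proofs are sound; the paper's is shorter and avoids the Van Tuyl--Villarreal recursion entirely, while yours exposes more structure (it shows that sequential Cohen--Macaulayness of $G_k$ forces the Van Tuyl--Villarreal recursion to propagate through $G$ itself, in the same spirit as Lemma \ref{bi-lemma} and Theorem \ref{main}) at the cost of invoking that characterization in both directions and running an induction on the number of vertices.
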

\begin{proof}
 Let $G$ be a bipartite graph with partitions $\{x_1,\ldots,x_n\}$ $\sqcup$
 $\{y_1,\ldots,y_m\}$. 
 Let $G_k$ be a graph as in Construction \ref{construction} for all $k \geq 1$.
 It follows from \cite[Corollary 2.6]{GRV05} and Lemma \ref{fak-result}
 that $\widetilde{J(G)^{(k)}}=\widetilde{J(G)^{k}}=J(G_k)$ for all $k \geq 1$.
 Note that  
 $G_i \setminus N_{G_i}[x_{1,i},\ldots,x_{n,i}] \simeq G_{i-1}$ for all
 $i \geq 1$. Now proceeding as in Theorem \ref{not-cl}, one can show that
 $G_k$ is not vertex decomposable for all $k \geq 2$. Therefore,
 $J(G)^{k}$ is not a componentwise linear  ideal
 for all $k \geq 2$.
\end{proof}

It may be noted that if $G$ is a bipartite graph, then $G$ may not be equal to 
$\B_G$. 

\begin{minipage}{\linewidth}
  %\centering
\begin{minipage}{0.45\linewidth}

\begin{figure}[H]

\begin{tikzpicture}[scale=0.3]
%\clip(-2.45,-2.04) rectangle (46.98,24.63);
\draw (6,18)-- (10,18);
\draw (6,14)-- (10,14);
\draw (6,10)-- (10,10);
\draw (10,6)-- (6,10);
\draw (10,14)-- (6,10);
\draw (10,10)-- (6,14);
\draw (10,18)-- (6,14);
\draw (14,18)-- (18,18);
\draw (14,14)-- (18,14);
\draw (14,10)-- (18,10);
\draw (18,6)-- (14,10);
\draw (18,14)-- (14,10);
\draw (14,14)-- (18,10);
\draw (4.84,4.69) node[anchor=north west] {Fig. 1. $G$};
\draw (13.6,4.78) node[anchor=north west] {Fig. 2. $ \mathcal{B}_G $};
\begin{scriptsize}
\fill [color=black] (6,18) circle (3.5pt);
\draw[color=black] (5.07,18.2) node {$x_1$};
\fill [color=black] (10,18) circle (3.5pt);
\draw[color=black] (11.01,18.2) node {$y_1$};
\fill [color=black] (6,14) circle (3.5pt);
\draw[color=black] (4.87,14.17) node {$x_2$};
\fill [color=black] (10,14) circle (3.5pt);
\draw[color=black] (10.59,14.85) node {$y_2$};
\fill [color=black] (6,10) circle (3.5pt);
\draw[color=black] (4.81,10.05) node {$x_3$};
\fill [color=black] (10,10) circle (3.5pt);
\draw[color=black] (10.59,10.86) node {$y_3$};
\fill [color=black] (10,6) circle (3.5pt);
\draw[color=black] (10.59,6.88) node {$y_4$};
\fill [color=black] (14,18) circle (3.5pt);
\draw[color=black] (14.36,18.74) node {$x_1$};
\fill [color=black] (18,18) circle (3.5pt);
\draw[color=black] (18.22,18.74) node {$y_1$};
\fill [color=black] (14,14) circle (3.5pt);
\draw[color=black] (13.02,14.17) node {$x_2$};
\fill [color=black] (18,14) circle (3.5pt);
\draw[color=black] (18.3,14.7) node {$y_2$};
\fill [color=black] (14,10) circle (3.5pt);
\draw[color=black] (13.07,10.14) node {$x_3$};
\fill [color=black] (18,10) circle (3.5pt);
\draw[color=black] (18.59,10.86) node {$y_3$};
\fill [color=black] (18,6) circle (3.5pt);
\draw[color=black] (18.55,6.98) node {$y_4$};
\end{scriptsize}
\end{tikzpicture}
	\end{figure}
\end{minipage}
\begin{minipage}{0.5\linewidth}

For example, let $G$ be a graph as shown in Fig. 1. It is not hard to see that 
$(y_1, x_3, x_2)$ is a shedding order of $G$ and $\{x_1,y_2,y_3,y_4\}$ is an $i$-order. Note that
$G$ is not equal to $\B_G$.
\end{minipage}
\end{minipage}

In the following lemma, we prove that 
if $G$ is a vertex decomposable bipartite graph, then so is $\B_G$.
This proof is almost verbatim of the proof of Theorem \ref{main} and we sketch the proof.
%The following lemma helps to obtain the main result of this section.
 
 \begin{lemma}\label{bi-lemma}
 If $G$ is a vertex decomposable bipartite graph, then so is $\B_G$. 
\end{lemma}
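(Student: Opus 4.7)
The plan is to prove that $\B_G$ is a $W$-graph and then conclude vertex decomposability via Theorem \ref{russ-rs}. I would induct on $|V(G)|$, showing that for every independent set $A$ of $\B_G$ the subgraph $\B_G \setminus N_{\B_G}[A]$ contains a simplicial vertex. The argument is closely modeled on the proof of Theorem \ref{main}. The base case ($G$ totally disconnected) is immediate. For the inductive step, decompose
$$A = A_\gamma \sqcup A_\alpha, \quad A_\gamma \subseteq \{x_{\gamma(1)},\dots,x_{\gamma(r)}\},\ A_\alpha \subseteq \{x_{\alpha(1)},\dots,x_{\alpha(l)}\}.$$

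First I would handle the case $A = \emptyset$. Since $G$ is a vertex decomposable bipartite graph, Theorem \ref{adam-vila-rs} supplies adjacent vertices $x, y$ with $\deg_G(x) = 1$ for which both $G\setminus N_G[x]$ and $G \setminus N_G[y]$ are vertex decomposable. Because $y$ is the neighbor of the simplicial vertex $x$, it is a shedding vertex of $G$, so I may arrange the shedding order with $x_{\alpha(1)} = y$; then $x$ becomes isolated in $G \setminus y$ and I place $x = x_{\gamma(1)}$. Since the only $G$-neighbor of $x$ is $y$, which belongs to the $\alpha$-part, $\deg_{\B_G}(x) = 1$ and $x$ is a simplicial vertex of $\B_G$.

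When $A \neq \emptyset$, the key observation is that $A_\gamma$ is independent in $G$, being a subset of the $G$-independent set $\{x_{\gamma(1)},\dots,x_{\gamma(r)}\}$. By Theorem \ref{ind-sub}, the graph $H := G \setminus N_G[A_\gamma]$ is a vertex decomposable bipartite graph strictly smaller than $G$. Inheriting the shedding order from $G$ identifies $\B_G \setminus N_{\B_G}[A_\gamma]$ with $\B_H$ up to isolated vertices; then $A_\alpha$ determines an independent set of $\B_H$, and the induction hypothesis produces a simplicial vertex of $\B_H \setminus N_{\B_H}[A_\alpha]$, which remains simplicial in $\B_G \setminus N_{\B_G}[A]$. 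The remaining case $A_\gamma = \emptyset$, $A_\alpha \neq \emptyset$ reduces to the previous one by enlarging $A$ with every $x_{\gamma(i)}$ all of whose $\B_G$-neighbours lie in $A_\alpha$, since such vertices become isolated in $\B_G \setminus N_{\B_G}[A_\alpha]$ and may be adjoined without affecting the conclusion.

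The principal technical hurdle is justifying the identification $\B_G \setminus N_{\B_G}[A_\gamma] = \B_H$ (modulo isolated vertices), which requires that the shedding order of $G$ restricts correctly to a shedding order of $H$ and that the $i$-order labelling is preserved. This hinges on the defining property that the $x_{\gamma(i)}$ are precisely the vertices left isolated after the entire shedding phase, a property that is stable under the deletion of $N_G[A_\gamma]$ for any $A_\gamma \subseteq \{x_{\gamma(1)},\dots,x_{\gamma(r)}\}$; once this is in hand, the remainder of the argument is a clean induction mirroring the structure of the proof of Theorem \ref{main}.
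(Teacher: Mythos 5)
Your overall strategy (show the graph in question is a $W$-graph and invoke Theorem \ref{russ-rs}) is in the spirit of the paper, but the route you take runs into a genuine gap at exactly the step you yourself flag as the ``principal technical hurdle.'' The identification $\B_G \setminus N_{\B_G}[A_\gamma] = \B_H$ with $H = G\setminus N_G[A_\gamma]$ is not merely a matter of the $\gamma$-vertices remaining the isolated ones: since $\B_H$ is only defined relative to a valid construction for $H$ (Construction \ref{intr-cons}), you must show that each surviving $x_{\alpha(i)}$ is still a shedding vertex of $H\setminus\{\text{earlier surviving }\alpha\text{'s}\}$ and that these intermediate graphs stay vertex decomposable. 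Shedding orders do not restrict in any obvious way under deletion of $N_G[A_\gamma]$ (a vertex can be shedding because of vertices that get deleted), and nothing you cite supplies this; without it the induction hypothesis simply does not apply to the graph $\B_G \setminus N_{\B_G}[A_\gamma]$. A related problem infects your base case $A=\emptyset$: you re-choose the shedding order so that $x_{\alpha(1)}=y$, but that changes $\B_G$. The lemma must hold for the subgraph $\B_G$ attached to the given (arbitrary) shedding order, so exhibiting a simplicial vertex of a differently constructed $\B_G$ proves nothing about the one at hand. Finally, your reduction of the case $A_\gamma=\emptyset$, $A_\alpha\neq\emptyset$ only works when some $\gamma$-vertex becomes isolated (in which case it is already simplicial and there is nothing to do); when every surviving $\gamma$-vertex still has a neighbour outside $A_\alpha$, your enlargement of $A$ is empty and this case is left unhandled, and there is no smaller bipartite graph of the form $G'\setminus N_{G'}[\,\cdot\,]$ to induct on, because $A_\alpha$ need not be a prefix of the shedding order.

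The paper avoids all of these issues by not working with $\B_G$ directly. It fixes an arbitrary shedding order and $i$-order, proves by induction on $|V(G)|$ that Fakhari's graph $G_2$ (Construction \ref{construction}) is a $W$-graph, concludes from Theorem \ref{russ-rs} that $G_2$ is vertex decomposable, and then observes that $\B_G = G_2 \setminus N_{G_2}[b_{1,2},\ldots,b_{s,2}]$ for the independent set of second-level $i$-order vertices; Theorem \ref{ind-sub} then yields vertex decomposability of $\B_G$. The key point is that vertex decomposability, unlike the property of ``being the $\B$ of a chosen order,'' is stable under removing closed neighbourhoods of independent sets, which is precisely the stability your argument needs but cannot invoke. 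If you want to salvage a direct proof that $\B_G$ is a $W$-graph, you would have to prove the order-restriction statement above (or find a substitute for it), and separately show that every $\B_G$, for every admissible order, has a simplicial vertex; as it stands, the proposal does not establish the lemma.
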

%\textcolor{red}{Is it not the case that if $G$ is bipartite then $\mathcal B_G = G$? So if $G$ is vertex decomposasble bipartite, then $\mathcal B_G$ is by assumption?}

\begin{proof}
Let $V(G)=\{x_1,\ldots,x_n\} \cup \{y_1,\ldots,y_m\}$,
$(a_1,\ldots,a_l)$ be a shedding order of $G$ and $\{b_1,\ldots,b_s\}$
be an $i$-order of $G$.
Let $G_2$ be the graph as in Construction \ref{construction}.
\vskip 1mm \noindent
\textbf{Claim:} $G_2$ is a $W$-graph.
 \vskip 0.05mm
 \noindent
 \textit{Proof of the claim:}  
 We prove the claim by induction on
 $|V(G)|$.  
 If $|V(G)|=2$, then by \cite[Theorem 3.6]{selva1}, $G_2$ is a vertex decomposable
 bipartite graph. Therefore,
 $G_2 \setminus N_{G_2}[B]$ has a simplicial vertex for any 
 independent set $B$ of $G_2$. Assume that $|V(G)| \geq 3$. 
 Let $A$ be an independent set of 
 $G_2$. Suppose $A=\emptyset$. Since $G$ has a simplicial vertex, by 
 \cite[Lemma 3.1]{selva1}, $G_2$ has a simplicial vertex. Assume that 
 $A \neq \emptyset$.  Let $A=A_1 \coprod A_2 \coprod A_3$ where
 $A_1 \subseteq \{x_{i,1} \mid 1 \leq i \leq n\}$,
 $A_2 \subseteq \{x_{i,2}, y_{j,1} \mid 1 \leq i \leq n,~ 1 \leq j \leq m \}$ and 
 $A_3 \subseteq \{y_{i,2} \mid 1 \leq i \leq n\}$.
  If $A_1 \neq \emptyset$, then proceeding as in Theorem \ref{main} of the proof,
  one can show that, by induction on $|V(G)|$, 
  $G_2 \setminus N_{G_2}[A]$ has a simplicial vertex.
Assume that $A_1= \emptyset$. Note that 
\[
G_2 \setminus N_{G_2}[A_2 \coprod \{y_{1,2},\ldots,y_{m,2}\}] =L \setminus N_{L}[A_2]
\text{ where } L=G_2 \setminus \{x_{1,1},\ldots,x_{n,1}, y_{1,2},\ldots,y_{m,2}\}.
\]
Since $L \simeq G$ and $A_2$ is an independent set of 
$L$, $L \setminus N_{L}[A_2]$ has a simplicial vertex, say $z_{p,q}$.
If $z_{p,q} \in \{x_{1,2},\ldots,x_{n,2}\}$, then 
$z_{p,q}$ is a simplicial vertex of $G_2 \setminus N_{G_2}[A]$.
Suppose $z_{p,q} \in \{y_{1,1},\ldots,y_{m,1}\}$. Set $z_{p,q}=y_{j,1}$ for some 
$1 \leq j \leq m$. If $y_{j,2} \in A_3$, then $y_{j,1}$ is a simplicial vertex of 
$G_2 \setminus N_{G_2}[A]$.
If $y_{j,2} \notin A_3$, then $y_{j,2}$ is a simplicial vertex of 
$G_2 \setminus N_{G_2}[A]$.
Hence the claim.
%Therefore $G_2 \setminus N_{G_2}[A]$ has a simplicial vertex for any 
%independent set $A$ of $G_2$.

By Theorem \ref{russ-rs}, $G_2$ is vertex decomposable.
Since $\{b_{1,2},\ldots,b_{s,2}\}$ is an independent set of $G_2$,
 by Theorem \ref{ind-sub}, $G_2 \setminus N_{G_2}[b_{1,2},\ldots,b_{s,2}]$ is 
 a vertex decomposable graph. Note that 
 $G_2 \setminus N_{G_2}[b_{1,2},\ldots,b_{s,2}]=\B_G$. Hence
 $\B_G$ is a vertex decomposable graph.
 \end{proof}

Now the main theorem of this section can be derived from the above results.
 \begin{theorem}\label{main-bipartite}
 Let $G$ be a bipartite graph. Then the following are equivalent:
 \begin{enumerate}
  \item $G$ is a vertex decomposable graph;
  \item $J(G)^k$ has linear quotients for all $k \geq 1$;
  \item $J(G)^k$ is a componentwise linear ideal for all $k \geq 1$;
  \item $J(G)^k$ is a componentwise linear ideal for some $k >1$.
 \end{enumerate}

\end{theorem}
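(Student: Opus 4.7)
The plan is to deduce Theorem \ref{main-bipartite} by assembling the machinery already developed: Theorem \ref{main} for symbolic powers of $W$-graphs, Lemma \ref{bi-lemma} that promotes vertex decomposability from $G$ to $\B_G$, Theorem \ref{bi-not-cl} that propagates non-linearity from $J(G)$ to all ordinary powers, and the identification $J(G)^k=J(G)^{(k)}$ for bipartite $G$ and $k\ge 2$ from \cite[Corollary 2.6]{GRV05}. The equivalence (2)$\Leftrightarrow$(3) is essentially the content of the last ingredient plus trivial implication, so the substantive work is (1)$\Rightarrow$(2) and (3)$\Rightarrow$(1).

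For the implication (1)$\Rightarrow$(2), I would begin by observing that a vertex decomposable bipartite graph is automatically a $W$-graph: this follows from Theorem \ref{adam-vila-rs}, since the recursive characterization of sequentially Cohen-Macaulay bipartite graphs forces the existence of a degree one vertex, hence a simplicial vertex, and the hypothesis is preserved by passing to $G\setminus N_G[A]$ for any independent set $A$ (using Theorem \ref{ind-sub} to keep vertex decomposability and the fact that induced subgraphs of bipartite graphs remain bipartite). Then for each independent set $A$ of $G$, the graph $G\setminus N_G[A]$ is a vertex decomposable bipartite graph, so Lemma \ref{bi-lemma} yields that $\B_{G\setminus N_G[A]}$ is vertex decomposable. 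Theorem \ref{main} then gives that $J(G)^{(k)}$ is componentwise linear for every $k\ge 1$. Since $J(G)^k=J(G)^{(k)}$ for $k\ge 2$ by \cite[Corollary 2.6]{GRV05}, and since $J(G)^1=J(G)$ is covered by the $k=1$ case, the conclusion of (2) follows.

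The implication (2)$\Rightarrow$(3) is trivial. For (3)$\Rightarrow$(1), I would argue by contraposition. If $G$ is not vertex decomposable, then by Theorem \ref{adam-rs} $G$ is not sequentially Cohen-Macaulay, so by the Herzog-Hibi and Herzog-Reiner-Welker theorem cited in the introduction, $J(G)$ is not componentwise linear. Theorem \ref{bi-not-cl} then forces $J(G)^k$ to be non-componentwise linear for every $k\ge 2$, contradicting (3).

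The main obstacle, such as it is, lies in (1)$\Rightarrow$(2): one must be careful to apply Theorem \ref{main} through $\B_{G\setminus N_G[A]}$ rather than directly to $G$, because the construction of $\B_G$ depends on a choice of shedding order whose behavior under taking links $G\setminus N_G[A]$ needs the preservation of the $W$-graph hypothesis. Once vertex decomposability of every $\B_{G\setminus N_G[A]}$ is secured via Lemma \ref{bi-lemma}, the rest is a matter of invoking the already proven symbolic-power theorem and collapsing ordinary and symbolic powers using the bipartite equality.
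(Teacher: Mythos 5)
Your proposal is correct and follows essentially the same route as the paper: apply Lemma \ref{bi-lemma} to each $G\setminus N_G[A]$ (vertex decomposable bipartite by Theorem \ref{ind-sub}), invoke Theorem \ref{main} together with the bipartite equality $J(G)^k=J(G)^{(k)}$ from \cite[Corollary 2.6]{GRV05} for (1)$\Rightarrow$(2), and use Theorem \ref{bi-not-cl} with Theorem \ref{adam-rs} for (3)$\Rightarrow$(1). Your explicit verification that a vertex decomposable bipartite graph is a $W$-graph is a detail the paper leaves to its earlier discussion, but it is the same argument.
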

\begin{proof}
 $(1)\Rightarrow (2)$ 
 Since $G \setminus N_G[A]$
is a vertex decomposable bipartite graph for any independent set $A$ of $G$, 
by Lemma \ref{bi-lemma}, $\B_{G \setminus N_G[A]}$ is vertex decomposable.
By Theorem \ref{main}, \cite[Corollary 2.6]{GRV05}, 
$J(G)^{k}$  has linear quotients for all $k \geq 1$.
\vskip 0.5mm
\noindent
 $(2)\Rightarrow (3)$ This implication follows from Theorem \ref{lq-rs}. 
 \vskip 0.5mm
\noindent
 $(3)\Rightarrow (4)$ This implication is trivial.
 \vskip 0.5mm
\noindent
$(4)\Rightarrow (1)$ 
It follows from Theorem \ref{bi-not-cl} that 
$J(G)$ is componentwise linear. Then $G$ is a 
sequentially Cohen-Macaulay graph. Therefore, by Theorem \ref{adam-rs},
$G$ is a vertex decomposable graph.
\end{proof}

	Since the regularity of a componentwise linear ideal can be computed from its generators, 
we obtain a formula for the regularity of  powers of vertex cover ideals of
bipartite graphs
in terms of the maximum size of minimal vertex covers of graph.

\begin{corollary}
 If $G$ is a vertex decomposable bipartite graph, then, 
 \[
  \reg(J(G)^k)=k \deg(J(G)), \text{ for all }k \geq 1, 
 \]
where $\deg(J(G))=\max \{\deg(f) \mid f \text{ is a minimal generator of } J(G) \}$.
\end{corollary}

Since a tree is a bipartite graph, we derive, from Theorem \ref{main-bipartite},
one of the main results of Kumar and Kumar:

\begin{corollary}\cite[Corollary 3.4]{KK20}\label{tree-kk}
 If $G$ is a tree, then $J(G)^k$ has linear quotients, for all $k \geq 1$, and
 hence it is componentwise linear.
\end{corollary}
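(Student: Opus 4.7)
The plan is to derive this as a direct consequence of Theorem \ref{main-bipartite}, together with the well-known fact that every tree is a vertex decomposable bipartite graph.

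First I would observe that a tree is bipartite (it contains no cycles, in particular no odd cycles), so Theorem \ref{main-bipartite} applies. Next I would establish that every tree $G$ is vertex decomposable by induction on $|V(G)|$. Pick any leaf $x \in V(G)$; then $x$ is a simplicial vertex and by \cite[Corollary 7]{Wood2} (referenced earlier in the paper) its neighbor $y$ is a shedding vertex of $G$. Both $G \setminus y$ and $G \setminus N_G[y]$ are disjoint unions of trees with fewer vertices, hence vertex decomposable by the inductive hypothesis (and by the fact that a disjoint union of vertex decomposable graphs is vertex decomposable). This shows $G$ itself is vertex decomposable.

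Having verified (1) of Theorem \ref{main-bipartite}, we immediately obtain that $J(G)^k$ is componentwise linear for all $k \geq 1$. To strengthen this to the linear quotients statement, I would unwind the proof of Theorem \ref{main}: there it is shown that when $\B_{G \setminus N_G[A]}$ is vertex decomposable for every independent set $A$, the graph $G_k$ from Construction \ref{construction} is itself vertex decomposable, and then \cite[Proposition 8.2.5]{Herzog'sBook} gives that $J(G_k)$ has linear quotients. By Lemma \ref{fak-result}, $J(G_k) = \widetilde{J(G)^{(k)}}$, and for bipartite $G$ we have $J(G)^{(k)} = J(G)^k$ by \cite[Corollary 2.6]{GRV05}. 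Finally, Corollary \ref{pol_reg}(2) transfers the linear quotients property from the polarization back to $J(G)^k$ itself.

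There is no real obstacle here; the only thing to check is that the hypothesis $\B_{G \setminus N_G[A]}$ vertex decomposable, needed to feed into Theorem \ref{main}, is automatic for trees since $G \setminus N_G[A]$ is itself a forest (hence vertex decomposable bipartite), and Lemma \ref{bi-lemma} then supplies vertex decomposability of $\B_{G \setminus N_G[A]}$.
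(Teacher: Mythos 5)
Your proposal is correct and follows essentially the same route as the paper, which deduces the corollary from Theorem \ref{main-bipartite} (equivalently, from Theorem \ref{main} after noting that $G \setminus N_G[A]$ and hence $\B_{G\setminus N_G[A]}$ is a forest, so vertex decomposable). Your extra step of unwinding the proof of Theorem \ref{main} --- $G_k$ vertex decomposable, so $J(G_k)$ has linear quotients, transferred back through polarization via Lemma \ref{fak-result} and Corollary \ref{pol_reg}(2) --- is exactly what justifies the ``linear quotients'' wording of the corollary, which the bare statement of Theorem \ref{main-bipartite} (componentwise linearity only) would not give.
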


\begin{remark}
 We would like to note here that Corollary \ref{tree-kk} can be derived from Theorem \ref{main}.
 If $G$ is a forest, then all the spanning bipartite subgraphs are forests. It follows from
 \cite[Corollary 7]{Wood2} that $\B_{G \setminus N_G[A]}$ is vertex decomposable.
 Therefore, by Theorem \ref{main} and \cite[Corollary 2.6]{GRV05}, $J(G)^k$ is a 
 componentwise linear for all $k \geq 2$.
\end{remark}

Additionally, Theorem \ref{main-bipartite} allows us to recover 
Seyed Fakhari's result:

\begin{corollary}\cite[Corollary 3.7]{Fakhari}\label{fak-rs} Let $G$ be a bipartite graph.
 Then the following are equivalent:
 \begin{enumerate}
  \item $J(G)$ has linear resolution;
  \item $J(G)^k$ has linear resolution for all $k \geq 2$;
  \item $J(G)^k$ has linear resolution for some $k \geq 2$.
 \end{enumerate}
\end{corollary}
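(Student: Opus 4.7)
The plan is to deduce each implication from Theorem \ref{main-bipartite} combined with two standard facts: the Eagon--Reiner theorem (recalled in the introduction), which identifies the Cohen--Macaulayness of $G$ with $J(G)$ having a linear resolution, and the elementary observation that a componentwise linear monomial ideal generated in a single degree automatically has a linear resolution.

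For $(1)\Rightarrow (2)$, I would first invoke Eagon--Reiner to conclude from $J(G)$ having a linear resolution that $G$ is Cohen--Macaulay, and then apply the Mohammadi--Moradi result \cite[Theorem 2.2]{FS10}, recalled at the beginning of this section, which immediately gives that $J(G)^k$ has a linear resolution for every $k\geq 1$. The implication $(2)\Rightarrow (3)$ is trivial.

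For $(3)\Rightarrow (1)$, assume $J(G)^k$ has a linear resolution for some $k\geq 2$. Since an ideal with a linear resolution is componentwise linear, Theorem \ref{main-bipartite} applies and yields that $G$ is vertex decomposable and that $J(G)$ itself is componentwise linear. The remaining task is to upgrade this to a linear resolution for $J(G)$, and for that it suffices to show that $J(G)$ is generated in a single degree. Let $u_1,\dots,u_r$ be the minimal (squarefree) generators of $J(G)$. I would verify that each $u_i^k$ is a minimal generator of $J(G)^k$: if $v_1\cdots v_k$ divides $u_i^k$ with each $v_s$ a minimal generator of $J(G)$, then each squarefree $v_s$ must be supported on $\operatorname{supp}(u_i)$, hence divides $u_i$, and by minimality $v_s=u_i$. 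Since $J(G)^k$ has a linear resolution, all its minimal generators share one degree, forcing $k\deg u_i = k\deg u_j$ for every $i,j$. Thus $J(G)$ is generated in a single degree, and combined with componentwise linearity this gives a linear resolution.

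The only mildly delicate point is the squarefree divisibility argument that lifts single-degree generation from $J(G)^k$ back to $J(G)$; everything else is an assembly of previously recorded tools, so I do not anticipate a serious obstacle.
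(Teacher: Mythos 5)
Your argument is correct, and---since the paper states this corollary without proof, merely as a consequence of Theorem \ref{main-bipartite}---it is essentially the intended derivation with the implicit details filled in. The crucial direction $(3)\Rightarrow(1)$ runs exactly along the paper's route: a linear resolution gives componentwise linearity, Theorem \ref{main-bipartite} then makes $J(G)$ itself componentwise linear, and your degree-lifting step is sound: if $v_1\cdots v_k\mid u_i^k$ with each $v_s$ a minimal (squarefree) generator of $J(G)$, then the support of each $v_s$ lies in that of $u_i$, so $v_s\mid u_i$ and hence $v_s=u_i$ by minimality; thus $u_i^k$ is a minimal generator of $J(G)^k$, equigeneratedness of $J(G)^k$ forces $\deg u_i$ to be independent of $i$, and a componentwise linear ideal generated in a single degree has a linear resolution. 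The only (harmless) divergence is in $(1)\Rightarrow(2)$, where you quote Eagon--Reiner together with Mohammadi--Moradi \cite[Theorem 2.2]{FS10}, both recorded in the paper; one could instead stay entirely within Theorem \ref{main-bipartite} by noting that $J(G)$ equigenerated implies $J(G)^k$ equigenerated, while componentwise linearity of $J(G)$ plus Theorem \ref{adam-rs} makes $G$ vertex decomposable, so Theorem \ref{main-bipartite} gives componentwise linearity of $J(G)^k$ and hence its linear resolution. Either way the proof is complete; there is no gap.
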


 We conclude the paper by raising the following question. This question is inspired by
 our main results, Theorem \ref{not-cl}, Theorem \ref{suff-cond1}, Theorem \ref{main}.
 \begin{question}
  Let $G$ be a vertex decomposable graph.
  \begin{enumerate}
   \item If $J(G)^{(2)}$ is not componentwise linear, is it true that $J(G)^{(k)}$ is not 
   componentwise linear for all $k \geq 3$?
   \item If  $\B_{G \setminus N_G[A]}$ is  vertex decomposable for any independent set $A$ of 
   $G$, is it true that $J(G)^{(k)}$ is a componentwise linear ideal for all $k \geq 2$?
  \end{enumerate}

 \end{question}

 Data sharing not applicable to this article as no datasets were generated or analysed during the current study.
\vskip 2mm
\noindent
\textbf{Acknowledgement:} 
The authors would like to thank Huy T\`ai H{\`a}  for valuable
discussions. The authors extensively used \textsc{Macaulay2}, \cite{M2}, and the 
packages \textsc{EdgeIdeals}, \cite{FHV_software},
\textsc{SimplicialDecomposability}, \cite{Cook}, \textsc{SymbolicPowers}, 
\cite{symbolic-package},
for testing their computations.
 The first author is 
supported by DST, Govt of India under the 
DST-INSPIRE [DST/Inspire/04/2019/001353] Faculty Scheme.

%\nocite*{}
%\bibliographystyle{alpha}  %% or 
%\bibliographystyle{plain}    %% ???
\bibliographystyle{abbrv}
\bibliography{refs_reg} 
\end{document}